\newtheorem*{rep@theorem}{\rep@title}
\newcommand{\newreptheorem}[2]{%
\newenvironment{rep#1}[1]{%
 \def\rep@title{#2 \ref{##1}}%
 \begin{rep@theorem}}%
 {\end{rep@theorem}}}
\newtheorem{intro_thm}{Theorem}
\newtheorem{intro_cor}[intro_thm]{Corollary}
\newtheorem{intro_quest}[intro_thm]{Question}
\newtheorem{intro_defi}[intro_thm]{Definition}
\newtheorem{lemma}{Lemma}[section]
\newtheorem{thm}[lemma]{Theorem}
\newtheorem{prop}[lemma]{Proposition}
\newtheorem{cor}[lemma]{Corollary}
\theoremstyle{definition}
\newtheorem{defi}[lemma]{Definition}
\newtheorem{example}[lemma]{Example}
\newtheorem{rem}[lemma]{Remark}
\newtheorem{construction}[lemma]{Construction}
\newtheorem{scholium}[lemma]{Scholium}
\theoremstyle{definition}
\newcommand\norm{\bBigg@{0.8}}
 \newcommand{\indnorm}[2][flex]{\csname #1l\endcsname\|#2%
                                 \csname #1r\endcsname\|\mathclose{}}
                                  \newcommand{\indnorml}[4][flex]{\csname #1l\endcsname\|#2%
                                 \csname #1r\endcsname\|_{#3}^{#4}\mathclose{}}
\newcommand{\genrel}[3][flex]{\csname #1l\endcsname\langle #2 \mathbin{\csname #1m\endcsname|} #3\csname #1r\endcsname\rangle}
\def\ltb#1{b^{(2)}_{#1}}
\def\loneb#1{\overline b^{\raisebox{-.2em}{$\ell^1$}}_{#1}}
\DeclareMathOperator{\BS}{BS}
\DeclareMathOperator{\lex}{\textup{Lex}}
\DeclareMathOperator{\cd}{cd}
\def\cdb{\cd_b}
\DeclareMathOperator{\bcd}{bcd}
\DeclareMathOperator{\hd}{hd}
\def\hdb{\hd_b}
\DeclareMathOperator{\projdim}{projdim}
\def\relprojdim{\projdim_b}
\DeclareMathOperator{\cost}{cost}
\DeclareMathOperator{\Ban}{Ban}
\DeclareMathOperator{\res}{res}
\DeclareMathOperator{\Groups}{{\bf Groups}}
\DeclareMathOperator{\id}{id}
\DeclareMathOperator{\Homeo}{Homeo}
\newcommand{\HH}{\operatorname{H}}
\newcommand{\CC}{\operatorname{C}}
\def\lonehom#1{\HH^{\ell^1}_{#1}}
\def\rlonehom#1{\overline{\HH}^{\ell^1}_{#1}}
\def\lonech#1{\CC^{\ell^1}_{#1}}
\newcommand{\bac}{\operatorname{BAc}}
\newcommand{\fa}[1]{%
  \forall_{#1}\quad}
\newcommand{\N}{\ensuremath {\mathbb{N}}}
\newcommand{\R} {\ensuremath {\mathbb{R}}}
\newcommand{\Z} {\ensuremath {\mathbb{Z}}}
\renewcommand{\rho}{\varrho}
\def\phi{\varphi}
\def\args{\;\cdot\;}
\long\def\forget#1{}
\def\longrightarrow{\rightarrow}
\def\longmapsto{\mapsto}
\begin{document}

\title[Bounded cohomology of finitely presented groups]{Bounded cohomology\\ of finitely presented groups:\\
    vanishing, non-vanishing, and computability}

\author[]{Francesco Fournier-Facio}
\address{Department of Mathematics, ETH Z\"urich, Z\"urich, Switzerland}
\email{francesco.fournier@math.ethz.ch}

\author[]{Clara L\"oh}
\address{Fakult\"{a}t f\"{u}r Mathematik, Universit\"{a}t Regensburg, Regensburg, Germany}
\email{clara.loeh@ur.de}

\author[]{Marco Moraschini}
\address{Fakult\"{a}t f\"{u}r Mathematik, Universit\"{a}t Regensburg, Regensburg, Germany \\
Current affiliation: Dipartimento di Matematica, Universit\`{a} di Bologna, Bologna, Italy}
\email{marco.moraschini2@unibo.it}

\thanks{}

\keywords{bounded cohomology, boundedly acyclic groups, mitotic groups, undecidability}
\subjclass[2020]{Primary: 18G90. Secondary: 20F10}
\date{\today.\ 
  Clara L\"oh and Marco Moraschini were supported by the CRC~1085 \emph{Higher Invariants}
  (Universit\"at Regensburg, funded by the~DFG).
  The results in this paper are part of Francesco Fournier-Facio's PhD project}

\begin{abstract}
  We provide new computations in bounded cohomology:

  A group is boundedly acyclic if its bounded cohomology with trivial
  real coefficients is zero in all positive degrees. We show that
  there exists a continuum of finitely generated non-amenable
  boundedly acyclic groups and
  construct a finitely presented non-amenable boundedly acyclic group.

  On the other hand, we construct a continuum of finitely generated
  groups, whose bounded cohomology has uncountable dimension in all
  degrees greater than or equal to~$2$, and a concrete finitely presented one.

  Countable non-amenable groups with these two extreme properties were
  previously known to exist, but these constitute the first finitely
  generated/finitely presented examples.

  Finally, we show that various algorithmic problems on bounded
  cohomology are undecidable.
\end{abstract}

\maketitle

%%%%%%%%%%%%%%%%%%%%%%%%%%%%%%%%%%%%%%%%%%%%%%%%%%%%%%%%%%%
\section{Introduction}

Bounded cohomology of groups is defined via the topological dual of
the simplicial resolution. This rich theory has applications to the
geometry of manifolds~\cite{vbc}, dynamics~\cite{Ghys}, rigidity
theory~\cite{rigidity, monodshalom}, quasimorphisms~\cite{Brooks,
  Grigorchuk} and stable commutator length~\cite{calegari}.  However,
beyond the case of amenable groups, computing the bounded cohomology
of a group is a very hard task, which can typically only be done in
low degrees.  We provide new computations in bounded cohomology of
finitely generated and finitely presented groups in arbitrarily large
degrees.

%%%%%%%%%
\subsection{Finitely generated non-amenable boundedly acyclic groups}

\emph{Boundedly acyclic groups} are those groups whose bounded
cohomology with trivial real coefficients vanishes in all positive degrees:

\begin{intro_defi}[Boundedly acyclic groups]\label{def:boundedly:acyclic:groups}
  A group $\Gamma$ is \emph{boundedly acyclic} if $\HH^n_b(\Gamma; \R)
  \cong 0$ for all $n \geq 1$.  Here, $\R$ denotes the real
  coefficients endowed with the trivial $\Gamma$-action.

  We write~$\bac$ for the class of boundedly acyclic groups.
\end{intro_defi}

The main examples of boundedly acyclic groups are amenable groups, as
proved by Johnson~\cite{Johnson}. Matsumoto and Morita showed that the
class of boundedly acyclic groups also contains non-amenable groups,
by proving that the group of homeomorphisms of $\R^n$ with compact
support has this property~\cite{MM}.  Similar techniques show that
there exist non-amenable boundedly acyclic groups that are countable:
all mitotic groups are boundedly acyclic~\cite{Loeh}. However, also
these examples are not finitely generated.

Recently, the interest in finding finitely presented bounded acyclic
groups has increased significantly because of the following
applications to spaces: The version of Gromov's Vanishing Theorem for
boundedly acyclic covers by Ivanov~\cite{Ivanov_bac_covers} and the
extended version of Gromov's Mapping Theorem~\cite{BAc}.

Combining mitoses and suitable HNN-extensions, we show:

\begin{intro_thm}[Finitely generated non-amenable boundedly acyclic groups; Theorem~\ref{thm:fg}]\label{thm:fg:bdd:acylic}
  There exists a functor $\mu \colon \Groups \to \Groups$ associating
  to each group $\Gamma$ a boundedly acyclic group $\mu(\Gamma)$ into
  which $\Gamma$ embeds.  The group $\mu(\Gamma)$ has the following
  properties:
\begin{enumerate}
\item Torsion elements of $\mu(\Gamma)$ are conjugate to elements of $\Gamma$.
\item If $\Gamma$ is infinite, then $\mu(\Gamma)$ has the same
  cardinality as $\Gamma$, otherwise $\mu(\Gamma)$ is countably
  infinite.
\item The group~$\mu(\Gamma)$ contains a non-abelian free subgroup.
\item If $\Gamma$ is $n$-generated, then $\mu(\Gamma)$ is
  $(n+3)$-generated. In particular, if $\Gamma$ is finitely
  generated, then $\mu(\Gamma)$ is finitely generated.
\end{enumerate}
\end{intro_thm}

Thanks to parts~(1) and~(4) of the theorem, in the same spirit as in classical
embedding results~\cite{HNN}, we deduce:

\begin{intro_cor}[Corollary~\ref{cor:bac:cont}]\label{cor:intro:bac:cont}
  There exist continuum many non-isomorphic $5$-generated non-amenable
  boundedly acyclic groups.
\end{intro_cor}

Furthermore, we construct a finitely presented non-amenable boundedly acyclic group:

\begin{intro_thm}[A finitely presented non-amenable boundedly acyclic group; Corollary~\ref{cor:fp}]\label{thm:fp:bdd:acylic}
  There exists a finitely presented non-amenable boundedly acyclic group.
\end{intro_thm}

This group is non-amenable in a very strong sense, since it contains an isomorphic copy of every finitely presented group.

Our proofs are based on constructions with mitotic groups by
Baumslag--Dyer--Heller~\cite{BDH}
and Baumslag--Dyer--Miller~\cite{BDM}. As mitotic groups are far from
being finitely generated, we proceed as follows:
\begin{itemize}
\item Apply HNN-extensions to obtain finitely generated or finitely
  presented groups from mitotic groups;
\item Preserve bounded acyclicity along the construction.
\end{itemize}
This can be achieved by combining Monod--Popa's result on ascending
HNN-extensions~\cite{coamenable} with appropriate algebraic
constructions.

\subsection*{A note from the future}

After the first version of this article was posted,
new computations of bounded cohomology have emerged. 
It is now known that the bounded cohomology of~$\Homeo_+(S^1)$
is a polynomial ring, generated by the Euler class~\cite{monodnariman};
similarly, the bounded cohomology of Thompson's group~$T$ is a
polynomial ring, generated by the Euler class~\cite{binate}, provided
that Thompson's group $F$ is boundedly acyclic.

A proof of bounded acyclicity of~$F$ was recently given by Monod~\cite{monod_F}.
Along the way, he also shows that wreath products of the form $\Gamma \wr \mathbb{Z}$ are boundedly acyclic, for every group $\Gamma$.
This provides an alternative proof of the fact that every finitely generated group embeds into a finitely generated boundedly acyclic group.

%%%%%%%%%
\subsection{Finitely generated groups with large bounded cohomology}

Conversely, it is also interesting to construct finitely generated 
groups \emph{with large bounded cohomology}:

\begin{intro_defi}[Groups with large bounded cohomology]\label{defi:groups:with:large:bounded}
  A group $\Gamma$ has \emph{large bounded cohomology} if $\dim_\R
  \HH^n_b(\Gamma; \R) \geq |\R|$ for all $n \geq 2$.
\end{intro_defi}

Countable examples with large bounded cohomology can be constructed
through product constructions~\cite{Loeh}. Recently, Nitsche proved
that certain groups of homeomorphisms have similar
properties~\cite{Nitsche}.

Until now, no finitely generated examples with large bounded
cohomology were known~\cite{Monod:inv,OH2,FPS}. As recently remarked
by Heuer~\cite{Heuer_thesis}, ``It is notoriously hard to explicitly
compute bounded cohomology, even for most basic groups: There is no
finitely generated group $G$ for which the full bounded cohomology
$(\HH^n_b(G; \R))_{n \in \, \N}$ with real coefficients is known
except where it is known to vanish in all degrees".

\begin{intro_thm}[Finitely generated groups with large bounded cohomology (Corollary~\ref{cor:large:cont})]\label{thm:fg:large:bdd}
  There exist continuum many non-isomorphic $8$-generated groups with large
  bounded cohomology.
\end{intro_thm}

The main challenge in the proof of Theorem~\ref{thm:fg:large:bdd} is
to find a finitely generated group whose bounded cohomology is large
in infinitely many degrees (e.g., all even degrees): the rest can be
done by taking appropriate cross products.  Our construction starts
with a finitely generated group~$\Gamma$ introduced by
Meier~\cite{Meier} with the striking property of being isomorphic to
its direct square.  We then introduce a sufficient condition for
groups with this property to have large bounded cohomology in all even
degrees (Theorem~\ref{thm:large2}) and we show that $\Gamma$ satisfies
that hypothesis.

It is not clear whether this construction can produce finitely
presented examples (Scholium~\ref{scholium:fp}). But a concrete
finitely presented example can be obtained through Thompson's
group~$T$:

\begin{intro_thm}[A finitely presented group with large bounded cohomology
    (Theorem~\ref{thm:largefp})]\label{thm:fp:large:bdd}
  If $\Lambda$ is the fundamental group of an oriented closed
  connected hyperbolic $3$-manifold, then $T \times \Lambda$
  has large bounded cohomology.
\end{intro_thm}

%%%%%%%%%
\subsection{Non-computability}

We consider the problem of algorithmic computability of bounded
cohomology.  Despite of the Hopf formula for group homology, the
algorithmic problem
  \begin{itemize}
    \item[] Given a finite presentation~$\genrel SR$, decide
      whether~$\HH_2(\genrel SR;\Z)$ is trivial or not.
  \end{itemize}
  is undecidable, as one can show by a variation of the Adian--Rabin
  constructions~\cite[Theorem~4]{gordon}. The same method also shows
  that the algorithmic problem
  \begin{itemize}
    \item[] Given a finite presentation~$\genrel SR$, decide
      whether~$\HH^2(\genrel SR;\R)$ is trivial or not.
  \end{itemize}
  is undecidable.  Similarly, we obtain for bounded cohomology:

\begin{intro_thm}[Non-computability;
    Theorem~\ref{thm:noncomp}]\label{ithm:noncomp}
  Let $d \in \N_{\geq 2}$.  The following algorithmic
  problems are undecidable: Given a finite presentation~$\genrel SR$,
  decide whether
  \begin{enumerate}
  \item $\HH_b^{d}(\genrel SR;\R) \cong 0$ or not;
  \item $\dim_\R \HH_b^{d}(\genrel SR ;\R) = |\R|$ or not;
  \item $\genrel SR$ is boundedly acyclic or not.
  \end{enumerate}
\end{intro_thm}

Further statements of this type are contained in
Theorem~\ref{thm:noncomp}.
The witness constructions used in the proof of Theorem~\ref{ithm:noncomp} also apply to show non-computability results for $L^2$-Betti numbers and cost (Remark~\ref{rem:compl2}).

Ordinary cohomology of finite simplicial complexes with coefficients
in $\R$ or~$\Z$ is computable through elementary algorithms from
linear algebra. In contrast, Gromov's Mapping Theorem lets us deduce
from Theorem~\ref{ithm:noncomp} that the corresponding property does
\emph{not} hold for bounded cohomology:

\begin{intro_thm}[Non-computability for spaces;
    Corollary~\ref{cor:noncompspaces}]\label{ithm:noncompspaces}
  Let $d \in \N_{\geq 2}$.
  The following algorithmic problems are undecidable: Given a
  finite simplicial complex~$X$, decide whether
  \begin{enumerate}
  \item $\HH_b^{d}(X;\R) \cong 0$ or not;
  \item $\dim_\R \HH_b^{d}(X;\R) = |\R|$ or not;
  \item $X$ is boundedly acyclic or not.
  \end{enumerate}
\end{intro_thm}

Undecidability of vanishing in degrees~$\geq 5$ could also be deduced
from Weinberger's non-computability result for simplicial
volume~\cite[Chapter~2.6]{weinberger}. Our proof is based on similar
witness constructions.

We conclude by asking:

\begin{intro_quest}
  Which sequences of semi-normed vector spaces can be realised as
  bounded cohomology~$\HH_b^*(\Gamma;\R)$ of finitely
  generated/finitely presented groups~$\Gamma$?
\end{intro_quest}

%%%%%
\subsection*{Acknowledgements}

We are grateful to James Farre, Stefan Friedl, Ro\-berto Frigerio, Yash Lodha, Nicolas Monod and George Raptis for some useful conversations.

%%%%%%
\subsection*{Organisation of this article}

In Section~\ref{sec:prelim}, we collect background material: We recall
the basics of bounded cohomology and $\ell^1$-homology with some of
their properties. In Section~\ref{subsec:embedding:classical} we list
classical embedding theorems that we will need in the sequel.  In
Section~\ref{subsec:mitotic}, we define mitotic groups and discuss some
classical constructions.

Section~\ref{sec:boundedly:acyclic:properties} is devoted to the study
of some closure properties of the class of boundedly acyclic groups.  In
Section~\ref{sec:finitely:gen}, we construct finitely generated
boundedly acyclic groups; in particular, we prove
Theorem~\ref{thm:fg:bdd:acylic}.  In Section~\ref{sec:finitely:pres},
we construct a finitely presented non-amenable boundedly acyclic group, proving
Theorem~\ref{thm:fp:bdd:acylic}.  Section~\ref{sec:large} contains the
construction of groups with large bounded cohomology and the proof of
Theorem~\ref{thm:fg:large:bdd}. The proof of Theorem~\ref{thm:fp:large:bdd}
is given in Section~\ref{sec:largefp}.  Finally, we prove the
non-computability results Theorem~\ref{ithm:noncomp} and
Theorem~\ref{ithm:noncompspaces} in Section~\ref{sec:noncomp}. The
appendix contains basics on (co)homological dimension in the setting
of bounded cohomology, which are used in Section~\ref{sec:noncomp}.

%%%%%%

%%%%%%%%%%%%%%%%%%%%%%%%%%%%%%%%%%%%%%%%%%%%%%%%%%%%%%%%%%%
\section{Basic definitions}\label{sec:prelim}

Unless explicitly stated, all groups considered are discrete.

%%%%%%%%
\subsection{$\ell^1$-Homology and bounded cohomology}\label{subsec:bdd:cohomology}

We recall the notions of $\ell^1$-\emph{homology} and \emph{bounded
  cohomology} of groups.  Bounded cohomology of groups was introduced by
Johnson~\cite{Johnson} and Trauber and then extended to spaces by
Gromov~\cite{vbc}.  We recall the definition of bounded cohomology of
groups.  For convenience, we introduce bounded cohomology as a dual
construction to $\ell^1$-homology~\cite{loehl1}.

\subsubsection{$\ell^1$-Homology}

For a group~$\Gamma$, let $\CC_\bullet(\Gamma)$ denote the simplicial
resolution of~$\Gamma$ over~$\R$. For~$n \in \N = \{0,1,\dots\}$, we
have~$\CC_n(\Gamma)
\coloneqq \bigoplus_{g \in \Gamma^{n+1}} \R \cdot g$ and
\begin{align*}
  \partial_n \colon \CC_n(\Gamma) &\longrightarrow \CC_{n-1}(\Gamma)
  \\
  (g_0,\dots, g_n) & \longmapsto \sum_{j=0}^n (-1)^n \cdot (g_0,\dots, \widehat g_j, \dots, g_n).
\end{align*}
We endow $\CC_\bullet(\Gamma; \R)$ with the $\ell^1$-\emph{norm}:
$$
\Biggl|\sum_{g \in \, \Gamma^{\bullet+1}} a_{g} \cdot g \Biggr|_1 := \sum_{g \in \, \Gamma^{\bullet+1}} |a_g|.
$$
The boundary operator~$\partial_\bullet$ is bounded in each degree;
thus, we can define the \emph{$\ell^1$-resolution}~$\lonech \bullet
(\Gamma)$ of~$\Gamma$ as the completion of~$\CC_\bullet(\Gamma)$ with
respect to the $\ell^1$-norm.

\begin{defi}[$\ell^1$-Homology of groups]
  Let $\Gamma$ be a group and let $V$ be a Banach $\Gamma$-module
  (e.g., $\R$ with the trivial $\Gamma$-action).
  We set
  \[ \lonech \bullet (\Gamma;V) := \lonech \bullet(\Gamma) \mathbin{\overline\otimes_\Gamma} V,
  \]
  where $\overline\otimes$ denotes the projective tensor product.
  Then the \emph{$\ell^1$-homology} of~$\Gamma$ with coefficients
  in~$V$ is defined by
  \[
  \lonehom \bullet(\Gamma;V) := 
  \HH_\bullet\bigl( \lonech \bullet(\Gamma;V)\bigr).
  \]
\end{defi}

We will mainly be interested in the case of trivial $\R$-coefficients. 
The construction of~$\lonehom \bullet (\args;\R)$ is functorial
with respect to group homomorphisms.

\begin{rem}
  We recall that $\ell^1$-homology groups are endowed with an
  $\ell^1$-seminorm induced by the $\ell^1$-norm on
  $\lonech \bullet(\Gamma; \R)$.  In the sequel
  we will also make use of the \emph{reduced 
    $\ell^1$-homology} $\rlonehom \bullet(\Gamma; \R)$:
  $$
  \rlonehom\bullet(\Gamma;\R) := 
  \frac{\ker \big(\partial_\bullet \colon \lonech \bullet(\Gamma; \R) \to \lonech {\bullet-1}(\Gamma; \R)\big)}%
       {\text{$|\cdot|_1$-closure of }{\partial_{\bullet+1} \lonech {\bullet+1}(\Gamma; \R)}}.
  $$
\end{rem}

\subsubsection{Bounded cohomology}

The construction of $\ell^1$-homology allows us to define bounded
cohomology as follows~\cite{loehthesis}:

\begin{defi}[Bounded cohomology]
  Let $\Gamma$ be a group and let $V$ be a Banach $\Gamma$-module.
  The \emph{bounded cochain complex} of~$\Gamma$ with coefficients
  in~$V$ is defined as the $\Gamma$-invariants of the topological
  dual:
  \[ \CC^\bullet_b(\Gamma;V) := B\bigl( \lonech \bullet(\Gamma) , V \bigr)^\Gamma.
  \]
  The \emph{bounded cohomology of~$\Gamma$ with coefficients in~$V$} is
  then defined as
  \[ \HH^\bullet_b(\Gamma;V) := \HH^\bullet\bigl( \CC^\bullet_b(\Gamma;V)\bigr).
  \]
\end{defi}

The construction of~$\HH^\bullet_b(\args;\R)$ is functorial with respect
to group homomorphisms.

\begin{rem}
  Bounded cohomology with trivial real coefficients is well known in
  degrees $0$ and~$1$: For every group $\Gamma$ we have
  $\HH^0_b(\Gamma; \R) \cong \R$ and $\HH^1_b(\Gamma; \R) \cong
  0$~\cite{vbc,Frigerio}.  For this reason, we did not specify low degrees
  in Definitions~\ref{def:boundedly:acyclic:groups}
  and~\ref{defi:groups:with:large:bounded}.
\end{rem}

\subsubsection{Duality with $\ell^1$-homology}

We recall how to use reduced $\ell^1$-homology and the interaction
through the evaluation map~$\HH_b^\bullet \otimes_\R \rlonehom \bullet
\longrightarrow \R $ to show non-vanishing of bounded cohomology:

\begin{defi}
  Let $\Gamma$ be a group and let $k \in \N$. We set
  \[ \loneb k (\Gamma) := \dim_\R \rlonehom k (\Gamma;\R). %\in \N \cup \{\infty\}.
  \]
\end{defi}

\begin{prop}[\protect{\cite[Proposition~3.2, Proposition~3.3]{Loeh}}]
  \label{prop:loneb}
  Let $\Gamma$ and $\Lambda$ be groups and let $k,m \in \N$. Then, we have:
  \begin{enumerate}
  \item $\dim_\R \HH_b^k(\Gamma;\R) \geq \loneb k(\Gamma)$;
  \item $\loneb 2(\Gamma) \geq \dim_\R \HH_b^2(\Gamma;\R)$~\cite[Theorem~2.3, Corollary~2.7]{MM};
  %\item $\loneb k (\Gamma \times \Lambda) \geq \loneb k (\Gamma)$;
  \item $\loneb {k+m} (\Gamma \times \Lambda) \geq \loneb k (\Gamma) \cdot \loneb m (\Lambda)$;
  \end{enumerate}
\end{prop}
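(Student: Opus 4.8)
The statement collects three facts about the invariant $\loneb k(\Gamma) = \dim_\R \rlonehom k(\Gamma;\R)$ and its relation to bounded cohomology. I will address each part in turn.

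For part~(1), the plan is to exploit the duality between bounded cohomology and reduced $\ell^1$-homology via the evaluation pairing $\HH_b^k(\Gamma;\R) \otimes_\R \rlonehom k(\Gamma;\R) \to \R$. The key point is that this pairing is \emph{nondegenerate on the right}: every nonzero class in reduced $\ell^1$-homology can be detected by some bounded cochain. Concretely, given a cycle $z$ whose class in $\rlonehom k$ is nonzero, it lies outside the $|\cdot|_1$-closure of the boundaries, so by Hahn--Banach there is a bounded functional vanishing on that closed subspace but not on $z$; after averaging over $\Gamma$ one obtains a bounded cocycle pairing nontrivially with $[z]$. Thus the induced map $\HH_b^k(\Gamma;\R) \to (\rlonehom k(\Gamma;\R))^*$ is surjective onto the continuous dual, which forces $\dim_\R \HH_b^k(\Gamma;\R) \geq \dim_\R \rlonehom k(\Gamma;\R) = \loneb k(\Gamma)$. (I would cite the cited references for the construction of the pairing and the Hahn--Banach separation rather than reproving it.)

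For part~(2), I would invoke the translation results of Matsumoto--Morita, as flagged by the citation. In degree $2$ the reduced $\ell^1$-homology is in fact isometrically dual to the reduced bounded cohomology, and in this low degree one has good control: the evaluation pairing is a perfect pairing between $\rlonehom 2(\Gamma;\R)$ and the image of $\HH_b^2 \to \HH^2$, so the dimensions agree up to the reduced--unreduced comparison, yielding the reverse inequality $\loneb 2(\Gamma) \geq \dim_\R \HH_b^2(\Gamma;\R)$. The degree-$2$ restriction is essential, since it is exactly here that the relevant spaces are known to be reduced/closed.

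For part~(3), the plan is to use a cross-product (homological cross product) construction on reduced $\ell^1$-chains. Given cycles $x \in \lonech k(\Gamma;\R)$ and $y \in \lonech m(\Lambda;\R)$ representing nonzero reduced classes, their $\ell^1$-cross product $x \times y \in \lonech{k+m}(\Gamma\times\Lambda;\R)$ is again a cycle, and the map $\times$ is compatible with the $\ell^1$-seminorms (the projective tensor structure is built precisely so that $|x \times y|_1 \leq |x|_1 \cdot |y|_1$). The crux is to show that the cross product of a basis of representatives of $\rlonehom k(\Gamma;\R)$ with a basis of $\rlonehom m(\Lambda;\R)$ descends to a \emph{linearly independent} family in $\rlonehom{k+m}(\Gamma\times\Lambda;\R)$; this is where I expect the main obstacle to lie. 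Linear independence upstairs is automatic, but one must rule out that nontrivial linear combinations of cross products become boundaries \emph{after} taking the $|\cdot|_1$-closure — the closure operation, not the algebraic boundaries, is the delicate part in the reduced theory. I would handle this by pairing against bounded cohomology classes: using part~(1) to produce detecting cocycles $\alpha_i$ on $\Gamma$ and $\beta_j$ on $\Lambda$, the cohomological cross products $\alpha_i \times \beta_j \in \HH_b^{k+m}(\Gamma\times\Lambda;\R)$ evaluate on the $\ell^1$-cross products by the product rule $\langle \alpha_i \times \beta_j, x_p \times y_q\rangle = \langle \alpha_i, x_p\rangle \cdot \langle \beta_j, y_q\rangle$, giving a block structure that separates the classes and hence certifies the inequality $\loneb{k+m}(\Gamma\times\Lambda) \geq \loneb k(\Gamma)\cdot\loneb m(\Lambda)$. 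Since this is the cited result of L\"oh, I would reference the original proof for the verification that the cross product is well-defined and compatible with the closures, and only sketch the pairing argument above.
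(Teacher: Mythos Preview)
The paper does not give its own proof of this proposition: it is stated with citations to L\"oh and to Matsumoto--Morita, and used as a black box. So there is nothing to compare your argument against beyond those references, and your sketches do follow the arguments in the cited sources (Hahn--Banach duality for~(1), the Matsumoto--Morita degree-$2$ result for~(2), and the $\ell^1$-cross product detected by bounded-cohomology cross products for~(3)).

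Two points are worth correcting. In part~(1), the phrase ``after averaging over~$\Gamma$'' is both unnecessary and problematic: averaging would require an invariant mean, i.e., amenability of~$\Gamma$, which is precisely what you do not want to assume. The fix is built into the definitions in the paper: one applies Hahn--Banach directly on the coinvariant complex~$\lonech k(\Gamma;\R)$, whose topological dual is $B(\lonech k(\Gamma),\R)^\Gamma = \CC_b^k(\Gamma;\R)$; the separating functional is therefore automatically a $\Gamma$-invariant bounded cochain, and vanishing on the closure of the boundaries makes it a cocycle. No averaging enters.

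In part~(2), your description is imprecise: the comparison map $\HH_b^2 \to \HH^2$ plays no role. The input from Matsumoto--Morita is that the canonical seminorm on~$\HH_b^2(\Gamma;\R)$ is a norm (their Corollary~2.7), so $\HH_b^2$ coincides with its reduced version; combined with the duality principle (their Theorem~2.3), this identifies~$\HH_b^2(\Gamma;\R)$ with the topological dual of~$\rlonehom 2(\Gamma;\R)$, from which the dimension inequality follows. Your sketch for part~(3) is the correct strategy and matches the cited argument.
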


\begin{rem}
\label{rem:loneb}
  The previous proposition has the following special situations:
  \begin{itemize}
  \item If $\Gamma_d = \Lambda \times \Lambda_2^d$ for some group $\Lambda_2$, then
    $$\dim_\R \HH_b^{2d}(\Gamma_d;\R) \geq \loneb {2d} (\Gamma_d) \geq \loneb 2 (\Lambda_2)^d.$$
  \item For every group $\Lambda_3$, the above property in degree~$2d$ still holds for~$\Gamma_d \times \Lambda_3$, and moreover
    $$\dim_\R \HH_b^{2d+3}(\Gamma_d \times \Lambda_3;\R) \geq \loneb {2d + 3} (\Gamma_d \times \Lambda_3)
    \geq \loneb {2d} (\Gamma_d) \cdot \loneb {3} (\Lambda_3).$$
  \end{itemize}
  For $k \in\{ 2, 3\}$, choosing $\Lambda_k$ with $\loneb k(\Lambda_k) > 0$, we thus
  obtain lower bounds on the dimension of the bounded cohomology
  spaces.  This will be used in Sections~\ref{sec:large} and~\ref{sec:largefp} to construct
  groups with large bounded cohomology, and in
  Section~\ref{sec:noncomp} to extend results in degree $2$ and~$3$ to
  higher degrees.
\end{rem}

\begin{example}\label{exa:hyp}
  If $\Gamma$ is the fundamental group of an oriented closed connected
  hyperbolic $n$-manifold~$M$, then $\loneb n (\Gamma) > 0$,
  because the simplicial volume of~$M$ is non-zero~\cite{vbc,Thurston}
  and hence the fundamental class of~$M$ yields a non-trivial
  element of~$\rlonehom n (M;\R) \cong \rlonehom n (\Gamma;\R)$.
\end{example}

%%%%%%%%

\subsection{Embedding theorems}\label{subsec:embedding:classical}

We collect some classical embedding theorems, which we will need in
the sequel (Sections~\ref{sec:finitely:gen}, \ref{sec:finitely:pres}
and~\ref{sec:large}).  We will adopt the following notation for group
presentations: Given a group presentation $H = \langle S \mid R
\rangle$ and a disjoint set of generators $S'$, we will simply write
$$
\langle H; S' \mid R' \rangle \coloneqq \langle S \cup S' \mid R \cup R' \rangle,
$$
where $R'$ is a new set of relations (possibly) involving both
elements in $S$ and $S'$. Moreover, it will always be clear from
the context whether $\genrel SR$ denotes a presentation or the
group given by this presentation.

\begin{defi}[Recursively presented group]\label{def:rec:pres:groups}
  A group presentation $\langle S \mid R \rangle$ is \emph{recursively
    enumerable} if the generating set $S$ is countable and the set of
  relations $R$ is a recursively enumerable subset of the free group
  over $S$.  A group is \emph{recursively presented} if it has a
  recursively enumerable presentation.
\end{defi}

The importance of recursively presented groups is elucidated by the
following well-known result by Higman:

\begin{thm}[{\cite[Theorem~1]{Higman}}]
  \label{thm:Higman}
  A group is recursively presented if and only if it embeds into a
  finitely presented group.
\end{thm}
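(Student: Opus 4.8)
The plan is to prove both implications of Higman's embedding theorem, phrased as in Higman's original work for finitely generated groups; the substantial content is the embeddability of recursively presented groups. For the easy direction, let $H$ be a finitely generated subgroup of a finitely presented group $G = \langle S \mid R \rangle$. The word problem of $G$ is recursively enumerable: one effectively lists all finite products of conjugates of the elements $r^{\pm 1}$ with $r \in R$, thereby enumerating every word over $S$ that represents the identity. Writing the finitely many generators of $H$ as explicit words in $S$, a word in these generators is a relation of $H$ exactly when its substituted image lies in this recursively enumerable set; this exhibits a recursively enumerable presentation of $H$.

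For the converse, I would first record that it suffices to treat the finitely generated case. By the Higman--Neumann--Neumann construction \cite{HNN}, every countable group embeds into a $2$-generated group, and this embedding can be carried out effectively, so it sends a recursively enumerable presentation to a recursively enumerable presentation of a finitely generated overgroup; thus a countable recursively presented group embeds into a finitely presented one as soon as finitely generated ones do.

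For the finitely generated core I would follow Higman's strategy organised around benign subgroups: for a free group $F$ of finite rank, call $H \leq F$ \emph{benign} if $F$ embeds into a finitely presented group $P$ that contains a finitely generated subgroup $L$ with $H = F \cap L$. The argument then splits into two stages. First, one establishes the closure properties of benign subgroups under the basic ``Higman operations'': finite intersections, joins, passage to finitely generated subgroups, and images and preimages under suitable homomorphisms between free groups. Second, one proves the coding lemma, namely that for every recursively enumerable set $A \subseteq \Z$ the associated subgroup $\langle b^{-k} a b^k : k \in A \rangle$ of a free group on $\{a,b\}$ is benign. Writing the given recursively presented group as $F/N$ with $N$ the normal closure of a recursively enumerable set of relators, the coding lemma together with the closure properties shows that $N$ is benign in $F$, and benign-ness of $N$ is precisely what lets one build the finitely presented group into which $F/N$ embeds.

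The main obstacle is the coding lemma. This is the point at which the purely logical input -- recursive enumerability of $A$, i.e. the existence of a Turing machine or recursive function that generates it -- must be converted into the algebraic assertion that a subgroup is benign. The standard route represents recursive functions starting from the basic functions and closing under composition, primitive recursion and minimisation, and then verifies that each of these three operations is mirrored by a construction preserving benign-ness. Carrying this out faithfully, while keeping track of the finitely many generators of the auxiliary subgroups at every stage, is the technical heart of the theorem and the part I expect to absorb essentially all of the work.
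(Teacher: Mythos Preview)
The paper does not prove this theorem at all: it is stated with a bare citation to Higman's original article~\cite{Higman} and used as a black box (to deduce Corollary~\ref{cor:muG:embeds:into:U}). There is therefore no proof in the paper to compare your proposal against.

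For what it is worth, your outline is a faithful sketch of Higman's original argument via benign subgroups, the closure properties under the Higman operations, and the coding lemma translating recursive enumerability into benign-ness. The easy direction and the reduction to the finitely generated case are handled correctly. Your caveat that the coding lemma is where essentially all the work lies is accurate; what you have written is an honest high-level plan rather than a proof, but since the paper itself only quotes the result, nothing more is required here.
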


Moreover, Higman also showed that there exists a universal finitely
presented group in the following sense:

\begin{thm}[{\cite[p.~456]{Higman}}]
  \label{thm:Higman:uni}
  There exists a universal finitely presented group, that is, 
  a finitely presented group that contains
  an isomorphic copy of every finitely presented group.
\end{thm}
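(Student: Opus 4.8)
The plan is to reduce the statement to Higman's embedding theorem (Theorem~\ref{thm:Higman}): I would first build a single \emph{recursively presented} group that already contains an isomorphic copy of every finitely presented group, and then embed that group into a finitely presented one. To this end, I would record that, over a fixed countable alphabet, there are only countably many finite group presentations and that the collection of all finite presentations is recursively enumerable, since one can algorithmically list all pairs $\genrel SR$ with $S$ and $R$ finite. Fixing such an enumeration $(\genrel{S_n}{R_n})_{n \in \N}$ with the generating sets $S_n$ pairwise disjoint, and writing $G_n$ for the group presented by $\genrel{S_n}{R_n}$, I would form the free product $G := \ast_{n \in \N} G_n$. By the normal form theorem for free products, each factor $G_n$ embeds into $G$, and since every finitely presented group is isomorphic to some $G_n$, the group $G$ contains an isomorphic copy of every finitely presented group.

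The crucial point is then to verify that $G$ is recursively presented. Its generating set $\bigcup_n S_n$ is countable by construction, and its relation set $\bigcup_n R_n$ is recursively enumerable: running through the enumeration of the presentations $\genrel{S_n}{R_n}$ and successively outputting the finitely many relators of each $R_n$ yields a recursive enumeration inside the free group on $\bigcup_n S_n$. Granting this, Higman's embedding theorem provides an embedding of $G$ into a finitely presented group $U$. Composing the embeddings $G_n \hookrightarrow G \hookrightarrow U$ then exhibits every finitely presented group as a subgroup of $U$, so $U$ is the desired universal finitely presented group.

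The hard part is essentially all packaged inside Higman's embedding theorem, which we are free to invoke; the only genuine work remaining on our side is the bookkeeping that makes the enumeration of finite presentations, and hence the induced presentation of the free product, effectively recursively enumerable. Everything else is a formal consequence of the universality of free products.
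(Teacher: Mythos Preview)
The paper does not supply its own proof of this statement: Theorem~\ref{thm:Higman:uni} is simply quoted from Higman's paper with a page reference and no argument. So there is nothing to compare against on the paper's side.

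Your argument is correct and is, in fact, the standard derivation of the universal finitely presented group from the Higman embedding theorem (Theorem~\ref{thm:Higman}): list all finite presentations effectively, form their free product, observe that this free product is recursively presented, and then embed it via Theorem~\ref{thm:Higman}. The only point that deserves a word of care is the claim that the union $\bigcup_n R_n$ is recursively enumerable in the free group on $\bigcup_n S_n$; you handle this correctly by fixing a recursive enumeration of the presentations with pairwise disjoint generating sets, so that the concatenated list of relators is output by an algorithm. Nothing is missing.
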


To move from one to continuum many examples in
Corollary~\ref{cor:intro:bac:cont} and Theorem~\ref{thm:fg:large:bdd},
we make use of the following:

\begin{thm}[{\cite[Section~4]{HNN}}]
  \label{thm:HNN}
  Every countable group $\Gamma$ embeds into a $2$-generator group~$K$
  with the following property: For every prime~$p$ the group~$K$
  contains $p$-torsion if and only if $\Gamma$ does.
\end{thm}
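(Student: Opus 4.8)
The plan is to realise the embedding $\Gamma \hookrightarrow K$ through a short tower of two elementary operations — free products with~$\Z$ and HNN extensions over free subgroups — each of which admits a torsion theorem controlling exactly which finite orders survive. For free products, the Kurosh subgroup theorem gives that every torsion element of $A * \Z$ is conjugate into~$A$; for an HNN extension $\langle G, t \mid t^{-1} H t = H', \phi \rangle$ with $H, H'$ free, Britton's lemma (equivalently, the action on the Bass--Serre tree) gives that every torsion element is conjugate to a torsion element of the base~$G$. Thus no operation in the tower creates torsion of a new order, and the associated subgroups are themselves torsion-free. Running this along the finite tower, every torsion element of~$K$ is conjugate to a torsion element of~$\Gamma$; in particular, for every prime~$p$, the group~$K$ has $p$-torsion if and only if~$\Gamma$ does, the forward direction being immediate from $\Gamma \hookrightarrow K$.

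For the embedding itself, I would enumerate $\Gamma = \{g_0 = 1, g_1, g_2, \dots\}$ and start from the free product $E = \Gamma * \langle x \rangle * \langle y \rangle$ with $x, y$ of infinite order. Set $b_n := y^{-n} x y^n$ and $c_n := g_n\, y^{-n} x y^n = g_n b_n$ for $n \geq 0$. The families $\{b_n\}$ and $\{c_n\}$ each freely generate free subgroups $B, C \leq E$ of countable rank: conjugating the free generator~$x$ by distinct powers of the free generator~$y$ prevents any collapse in the free-product normal form, and the $\Gamma$-labels~$g_n$ sit in a different free factor, so they cannot cancel the surviving $x$-syllables (this is where torsion in~$\Gamma$ does no harm, as it is the infinite-order letter~$x$ that carries freeness). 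The assignment $b_n \mapsto c_n$ is then an isomorphism $B \xrightarrow{\sim} C$, and I would form the HNN extension $E^* := \langle E, t \mid t^{-1} b_n t = c_n \ (n \geq 0) \rangle$. In~$E^*$ one reads off $g_n = c_n b_n^{-1} = (t^{-1} b_n t)\, b_n^{-1}$, so every element of~$\Gamma$ lies in $\langle x, y, t \rangle$; hence $E^* = \langle x, y, t \rangle$ is a $3$-generator group containing~$\Gamma$, built only from free products with~$\Z$ and an HNN extension over free subgroups.

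To descend from three generators to two, I would apply one further HNN extension of the same type: inside (a free product of) $E^*$ one exhibits two free subgroups of countable rank, one of them a family of $y$-conjugates of~$x$ and the other a correspondingly labelled family now also encoding~$t$, so that identifying them by a single stable letter expresses the remaining generator in terms of the other two. The outcome is a $2$-generator group~$K \supseteq \Gamma$, again obtained purely through free products with~$\Z$ and HNN extensions over free subgroups, so the torsion analysis of the first paragraph applies verbatim and yields the claimed $p$-torsion equivalence, with both generators of infinite order.

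The main obstacle is the freeness verification: showing that $\{b_n\}$ and especially $\{c_n\}$ really are free bases, so that the HNN extensions are well defined and the inclusions $E \hookrightarrow E^*$ are embeddings. This is a normal-form computation in the free product~$E$ together with Britton's lemma for the HNN extension; the delicate point is to check that in any reduced word in the $c_n^{\pm 1}$ the conjugating $y$-powers keep the $x$-syllables from amalgamating, so that no relation among the $c_n$ is forced by relations inside~$\Gamma$. Once freeness and the resulting embeddings are established, the reduction to two generators and the torsion bookkeeping are routine applications of the two torsion theorems.
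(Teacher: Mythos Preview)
The paper does not supply its own proof of this statement: Theorem~\ref{thm:HNN} is simply quoted from the original Higman--Neumann--Neumann paper, with the citation~\cite[Section~4]{HNN} standing in lieu of an argument. So there is no in-paper proof to compare against.

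Your proposal is essentially the classical HNN construction, and the overall architecture is correct: build $E = \Gamma * \langle x\rangle * \langle y\rangle$, exhibit two free subgroups of countable rank related by the shift $b_n \mapsto c_n = g_n b_n$, pass to the HNN extension to force $g_n \in \langle x,y,t\rangle$, and control torsion throughout via Kurosh and the torsion theorem for HNN extensions (equivalently, ellipticity of finite-order elements in the Bass--Serre tree). Your identification of the freeness of~$\{c_n\}$ as the one genuinely delicate point is accurate, and your normal-form reasoning for it is on the right track; note that you should enumerate~$\Gamma$ without repetitions so that $g_{n_i}^{-1} g_{n_{i+1}} \neq 1$ whenever $n_i \neq n_{i+1}$, which is what prevents collapse at a $(-,+)$ junction.

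The only place your sketch is thin is the descent from three generators to two. As written, ``one further HNN extension of the same type'' is a correct strategy but not yet a proof: you need to specify the two free subgroups and the isomorphism, and check that the new stable letter together with one of the old generators already generates everything. One clean way to do this is to iterate your own construction with $\Gamma$ replaced by the $3$-generated group~$E^*$: the same $b_n/c_n$ trick over $E^* * \langle x'\rangle * \langle y'\rangle$ expresses the three generators of~$E^*$ in terms of~$x', y'$ and a new stable letter~$t'$, but now only finitely many $c_n$'s are needed, so one can fold the role of~$t'$ into, say, $y'$ by choosing the associated subgroups more carefully (this is how the original HNN argument lands on two generators of infinite order). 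Once you make that step explicit, the torsion control you set up in the first paragraph applies verbatim and the argument is complete.
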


\begin{cor}
\label{cor:HNN:cont}
There exist continuum many $2$-generator groups, which are pairwise
distinguished by their torsion.
\end{cor}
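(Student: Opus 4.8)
The plan is to manufacture, for each set of primes, a single $2$-generator group whose torsion encodes exactly that set, and then observe that there are continuum many sets of primes. Since Theorem~\ref{thm:HNN} transports the torsion information from a countable group to a $2$-generator overgroup verbatim (prime by prime), the only input we need is a supply of countable groups realising every possible torsion behaviour.

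First I would fix, for each subset $S$ of the set of primes, the countable group
\[
  \Gamma_S \coloneqq \bigoplus_{p \in S} \Z/p\Z .
\]
For every prime $p$, the group $\Gamma_S$ contains an element of order $p$ if and only if $p \in S$ (a product of finitely many cyclic factors of coprime orders has order the product of those orders, so the torsion of $\Gamma_S$ is governed exactly by the primes occurring as factor orders). Next, applying Theorem~\ref{thm:HNN} to $\Gamma_S$, I obtain a $2$-generator group $K_S$ into which $\Gamma_S$ embeds and such that, for every prime $p$, the group $K_S$ contains $p$-torsion if and only if $\Gamma_S$ does; that is, if and only if $p \in S$.

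It then remains to distinguish the $K_S$ and to count them. If $S \neq S'$, choose a prime $p$ lying in exactly one of $S$ and $S'$; then precisely one of $K_S, K_{S'}$ contains $p$-torsion. Since containing $p$-torsion is an isomorphism invariant, $K_S \not\cong K_{S'}$, and in fact the whole family $(K_S)_S$ is pairwise distinguished by torsion. Because the set of primes is countably infinite, there are $2^{\aleph_0} = |\R|$ subsets $S$, yielding continuum many pairwise non-isomorphic $2$-generator groups, as claimed.

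There is no serious obstacle here: the statement is a direct packaging of Theorem~\ref{thm:HNN}. The only points requiring care are the verification that $\Gamma_S$ has the prescribed torsion set (immediate from the coprimality of the factors) and the cardinality bookkeeping, namely that distinct subsets of the primes give genuinely distinct isomorphism classes so that the assignment $S \mapsto K_S$ is injective on isomorphism classes and the family has size exactly continuum.
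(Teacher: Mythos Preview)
Your proof is correct and follows exactly the paper's approach: the paper likewise applies Theorem~\ref{thm:HNN} to the groups~$\bigoplus_{p \in P} \Z/p\Z$ as $P$ ranges over subsets of the primes. You have simply spelled out the torsion verification and cardinality bookkeeping that the paper leaves implicit.
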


\begin{proof}
  For each (possibly infinite) set $P$ of prime numbers, it is
  sufficient to apply Thorem~\ref{thm:HNN} to
  $\bigoplus_{p \in P} \Z / p \Z$.
\end{proof}

\begin{rem}
\label{rem:continuum:tf}

More explicit constructions allow to show that there exist continuum many pairwise non-isomorphic $2$-generator groups, which are moreover torsion-free~\cite{Camm}.
\end{rem}

\begin{rem}\label{rem:HNN:embedding}
  Notice that all these results make essential use of
  HNN-extensions. Indeed, every group $\Gamma$ embeds into every 
  HNN-extension of the form~$\Gamma \ast_\varphi$~\cite{HNN}.  We will
  use this fact repeatedly in the sequel.
\end{rem}

%%%%%%%%%%

\subsection{Mitotic groups}\label{subsec:mitotic}

\emph{Mitotic groups} are acyclic groups, first introduced by
Baumslag, Dyer and Heller~\cite{BDH} as building blocks in order to
prove new results about functorial embeddings of groups into acyclic
groups. Mitotic groups are based on \emph{mitoses}:

\begin{defi}[Mitosis]\label{def:mitosis}
  Let $H$ be a subgroup of a group $\Gamma$. We say that $\Gamma$ is a
  \emph{mitosis of} $H$ if there exist two elements $s, d \in \,
  \Gamma$ such that the following hold
  \begin{enumerate}
  \item $\Gamma$ is generated by $H$, $s$ and $d$;
  \item For all~$h, h' \in \, H$, we have $[h', s^{-1} h s] = 1$.
  \item For all~$h \in \, H$, we have 
    $
    d^{-1} h d = h s^{-1} h s;
    $
  \end{enumerate}
  We use the commutator convention~$[x,y]:=x^{-1}y^{-1}xy$.
\end{defi}

The second condition says that $\Gamma$ contains two
conjugate, commuting copies of~$H$
(which are allowed to intersect non-trivially).
This implies that there exists a
third, diagonal copy, and the third condition says that this is also
conjugate to the first copy.  This is better illustrated by the
next example: Following Baumslag, Dyer and Heller~\cite[Section
  5]{BDH}, we recall how to embed every group into its
\emph{standard mitosis}:

\begin{example}[Standard mitosis]\label{ex:standard:mitosis}
  Let $\Gamma$ be a group. We begin by embedding $\Gamma$
  into $\Gamma \times \Gamma$ via the identification $g \mapsto (g,
  1)$. Then, we construct a mitotis for $\Gamma$ by applying two
  HNN-extensions as follows: We add an element $s$ that conjugates
  $(g, 1)$ to $(1, g)$, and then an element $d$ that conjugates $(g,
  1)$ to $(g, g)$.  This leads to a group
  \begin{align*}
    m(\Gamma) &\coloneqq \langle \Gamma \times \Gamma; s, d \mid s^{-1} (g, 1) s = (1, g), d^{-1} (g, 1) d = (g, g) \rangle \\
    &= \langle \Gamma; s, d \mid d^{-1} g d = g s^{-1} g s, [g, s^{-1} h s] = 1 : g, h \in \Gamma \rangle,
  \end{align*}
  which is called the \emph{standard mitosis} of $\Gamma$. It is immediate
  to check that $\Gamma$ is embedded into $m(\Gamma)$
  (Remark~\ref{rem:HNN:embedding}).

  This construction is functorial in the following
  sense~\cite[Lemma~5.2]{BDH}: Every homomorphism $\varphi \colon
  \Gamma_1 \to \Gamma_2$ induces a homomorphism $m(\varphi) \colon
  m(\Gamma_1) \to m(\Gamma_2)$ just by sending $g \to \varphi(g)$ for
  every $g \in \, \Gamma_1$ and $s_1 \mapsto s_2$, $d_1 \mapsto d_2$.
  Moreover, the functor $m \colon \Groups \to \Groups$ preserves
  monomorphisms.
\end{example}

To prove non-amenability of the boundedly acyclic groups constructed
in Theorem~\ref{thm:fg:bdd:acylic}, the following will be useful:

\begin{rem}
\label{rem:mitosis:free}
  The subgroup of~$m(\Gamma)$ generated by~$s$ and~$d$ is free of
  rank~$2$: Indeed, the map $m(\Gamma) \to F_2$ sending $s$ and $d$ to
  the two generators and annihilating $\Gamma$ is a surjective
  homomorphism, as can be easily seen from the latter presentation of
  $m(\Gamma)$.
\end{rem}

\begin{defi}[Mitotic groups]\label{def:mitotic}
  A group $\Gamma$ is \emph{mitotic} if every finitely generated
  subgroup of~$\Gamma$ admits a mitotis in~$\Gamma$.
\end{defi}

Using the construction of Example~\ref{ex:standard:mitosis}, it is
easy to show that every group embeds into a mitotic
group~\cite[Lemma~5.4]{BDH}:

\begin{example}[Standard mitotic embedding]\label{ex:standard:mitotic:embedding}
  Given a group $\Gamma$ we already know that it can be
  embedded into its standard mitosis $m(\Gamma)$
  (Example~\ref{ex:standard:mitosis}).  Moreover, since the functor~$m$
  preserves monomorphisms, we can construct the direct union
  (colimit) $m^\infty(\Gamma) = \bigcup_{i \geq 0} m^i(\Gamma)$, which
  is mitotic.
\end{example}

Mitotic groups are known to be acyclic~\cite{BDH}.  Our
interest in them is motivated by the following result (see
Definition~\ref{def:boundedly:acyclic:groups}):

\begin{thm}[{\cite[Theorem~1.2]{Loeh}}]
  \label{thm:mitotic:bac}
  All mitotic groups are boundedly acyclic.
\end{thm}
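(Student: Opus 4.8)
The plan is to prove a \emph{local} vanishing statement by induction on the cohomological degree and then to globalise it: for every finitely generated subgroup $H \leq \Gamma$ and every $n \geq 1$, the restriction map $\iota_H^\ast \colon \HH^n_b(\Gamma;\R) \to \HH^n_b(H;\R)$ induced by the inclusion $\iota_H \colon H \hookrightarrow \Gamma$ is zero. Since $\Gamma$ is mitotic, every such $H$ admits a mitosis inside $\Gamma$, so the argument has exactly the structural input it needs at each stage of the induction. The base case $n = 1$ is free, because $\HH^1_b(H;\R) \cong 0$ for all groups $H$.

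The structural input from a mitosis of $H$, with elements $s, d$, is the following trio of homomorphisms $H \to \Gamma$: the inclusion $\iota_1 = \iota_H$, the conjugate copy $\iota_2 \colon h \mapsto s^{-1} h s$, and the diagonal $\delta \colon h \mapsto d^{-1} h d$. By Definition~\ref{def:mitosis}(2) the images $\iota_1(H)$ and $\iota_2(H)$ commute, and by Definition~\ref{def:mitosis}(3) we have $\delta(h) = h \cdot s^{-1} h s = \iota_1(h)\,\iota_2(h)$. Since $\iota_2$ and $\delta$ are obtained from $\iota_1$ by conjugation in $\Gamma$, and conjugation by a fixed group element induces the identity on bounded cohomology, we get
\[
  \iota_1^\ast = \iota_2^\ast = \delta^\ast \colon \HH^n_b(\Gamma;\R) \longrightarrow \HH^n_b(H;\R).
\]

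The heart of the argument is an \emph{additivity} property of the diagonal. The map $\delta$ factors as $H \xrightarrow{\Delta} H \times H \xrightarrow{\beta} \Gamma$, where $\Delta$ is the diagonal and $\beta(h,h') = \iota_1(h)\,\iota_2(h')$ is a homomorphism precisely because $\iota_1(H)$ and $\iota_2(H)$ commute, restricting to $\iota_1$ and $\iota_2$ on the two factors. Analysing $\Delta^\ast \beta^\ast$ on a class $\omega \in \HH^n_b(\Gamma;\R)$ through the cross/cup-product structure gives $\delta^\ast \omega = \iota_1^\ast \omega + \iota_2^\ast \omega + (\text{mixed terms})$, where each mixed term is a cup product $\iota_1^\ast(\args) \smile \iota_2^\ast(\args)$ with both factors of degree strictly between $0$ and $n$. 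By the inductive hypothesis these lower-degree restrictions vanish, so the mixed terms disappear and $\delta^\ast = \iota_1^\ast + \iota_2^\ast$ in degree $n$. Combining with the conjugacy identities yields $\iota_1^\ast = 2\,\iota_1^\ast$, hence $\iota_1^\ast = 0$, completing the induction.

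The main obstacle is twofold, and both difficulties are genuinely caused by passing from ordinary to bounded cohomology. First, bounded cohomology has no Künneth isomorphism, so the additivity step cannot be read off from an algebraic decomposition of $\HH^n_b(H \times H;\R)$ into cross products; the off-diagonal part of $\beta^\ast\omega$ is not accessible this way, and isolating it and killing the mixed terms must instead be carried out at the level of (semi)normed cochains, with the primitive kept bounded — this is the technical core that distinguishes the bounded statement from the classical acyclicity of mitotic groups. Second, and more seriously, bounded cohomology does not commute with directed unions of groups, so the local vanishing $\iota_H^\ast = 0$ for all finitely generated $H$ does not formally imply $\HH^n_b(\Gamma;\R) \cong 0$. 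To close this gap I would make the construction uniform in $H$, producing the killing coboundary with a norm bound independent of $H$ and compatible along inclusions, so that it assembles over the directed system of finitely generated subgroups; alternatively, one can run the dual argument on reduced $\ell^1$-homology $\overline{\HH}{}^{\ell^1}_\bullet(\Gamma;\R)$, whose completion-based nature is better behaved under colimits, and then invoke duality with bounded cohomology.
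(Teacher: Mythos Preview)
The paper does not prove this theorem; it is stated with a citation to~\cite[Theorem~1.2]{Loeh} and no argument is given here. So there is no in-paper proof to compare against, only the cited source.

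Your sketch is the right strategy and matches the approach of that source. The conjugacy identities $\iota_1^*=\iota_2^*=\delta^*$, the factorisation of~$\delta$ through the commuting-product map~$\beta\colon H\times H\to\Gamma$, and the doubling conclusion $\iota_1^*=2\iota_1^*$ are exactly the structural core, and you have correctly isolated the two points at which the classical Baumslag--Dyer--Heller acyclicity argument fails to transfer verbatim to the bounded setting.

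One refinement is needed for the induction to close. Because there is no K\"unneth decomposition, the ``mixed terms'' are not literally cup products of restricted cohomology classes; at the chain level they factor through the inclusions~$(\iota_j)_*$ in strictly lower chain-degree. To kill them one therefore needs the inductive hypothesis in a quantitative chain-level form --- ``there is a bounded filling of norm at most~$K_k$, with $K_k$ depending only on~$k$'' --- rather than the bare cohomological statement that $\iota_H^*=0$ in~$\HH^k_b$. This same uniformity is what resolves the colimit obstacle: the cited paper runs the whole induction on the $\ell^1$-chain level, producing for each cycle supported on a finitely generated subgroup a filling (inside an iterated chain of mitoses) whose norm is controlled independently of the subgroup, and then handles arbitrary $\ell^1$-cycles by approximation and completeness. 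So of your two proposed globalisation routes, the second is essentially what the source does; note, however, that it is the \emph{unreduced} $\ell^1$-homology whose vanishing is equivalent (via Matsumoto--Morita duality) to bounded acyclicity --- vanishing of the reduced version alone would not suffice.
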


%%%%

\section{Basic properties of boundedly acyclic groups}\label{sec:boundedly:acyclic:properties}

The class~$\bac$ of \emph{boundedly acyclic groups} consists of those
groups with vanishing bounded cohomology in all positive degrees and
trivial real coefficients $\R$
(Definition~\ref{def:boundedly:acyclic:groups}).  They appeared in the
literature also under the name of groups with zero bounded
cohomological dimension~\cite{Loeh}, but we prefer to stick with the
name boundedly acyclic because of the recent characterisation of
boundedly acyclic maps~\cite{BAc} and Ivanov's work on boundedly
acyclic open covers~\cite{Ivanov_bac_covers}. Moreover, this choice
also avoids any confusion with the bounded cohomological
dimension~$\cdb$ that we will introduce and discuss later
(Sections~\ref{sec:noncomp} and Appendix~\ref{appx:cdb}).  In this
section we discuss some hereditary properties of boundedly acyclic
groups.

%%%%%%%%%%%%%%%%%%

\subsection{Boundedly acyclic groups and HNN extensions}

We recall the definition of \emph{co-amenability} and
how this property provides information about~$\bac$
and HHN-extensions.

\begin{prop}[{\cite[Corollary 4.2.2]{BAc}}]
  \label{prop:ext}
  Let $1 \to H \to \Gamma \to Q \to 1$ be a short exact sequence of
  groups. Suppose $H \in \bac$. Then $\Gamma \in \bac$ if and only if
  $Q \in \bac$.
\end{prop}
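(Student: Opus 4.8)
The plan is to deduce the biconditional from a single statement: that the quotient map $\pi \colon \Gamma \to Q$ induces an isomorphism on bounded cohomology in every degree. Since $H$ is the kernel of $\pi$, it is a normal subgroup of $\Gamma$, and it is boundedly acyclic by hypothesis; the inflation map $\HH_b^n(Q;\R) \to \HH_b^n(\Gamma;\R)$ should then be an isomorphism for all $n \in \N$. This is exactly the content of the extended version of Gromov's Mapping Theorem for boundedly acyclic normal subgroups, the bounded-acyclic analogue of the classical amenable case. Granting this isomorphism, the proposition is immediate: $\HH_b^n(\Gamma;\R) \cong \HH_b^n(Q;\R)$ for every $n \geq 1$, so the left-hand side vanishes in all positive degrees precisely when the right-hand side does, i.e., $\Gamma \in \bac$ if and only if $Q \in \bac$.

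So the real work lies in establishing the inflation isomorphism. First I would set up a Lyndon--Hochschild--Serre type spectral sequence for bounded cohomology of the extension $1 \to H \to \Gamma \to Q \to 1$, with $E_2$-page $E_2^{p,q} = \HH_b^p(Q; \HH_b^q(H;\R))$ converging to $\HH_b^{p+q}(\Gamma;\R)$, where the coefficient spaces $\HH_b^q(H;\R)$ carry their induced $Q$-action as Banach $Q$-modules. Bounded acyclicity of $H$ then forces $\HH_b^q(H;\R) = 0$ for all $q \geq 1$ and $\HH_b^0(H;\R) \cong \R$ with trivial $Q$-action, so the page is concentrated in the row $q = 0$, where it reads $E_2^{p,0} = \HH_b^p(Q;\R)$. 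The spectral sequence therefore collapses at $E_2$, yielding the desired isomorphism $\HH_b^p(\Gamma;\R) \cong \HH_b^p(Q;\R)$ realised by inflation.

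The hard part will be the functional-analytic bookkeeping rather than the collapse itself: one must check that the relevant coefficient spaces are genuine Banach $Q$-modules (passing to reduced bounded cohomology where necessary to avoid non-Hausdorff seminormed spaces) and that the edge map coincides with inflation. An alternative, cleaner route that sidesteps the spectral sequence is to argue directly at the level of resolutions in the sense of Monod's homological algebra: one shows that, because $H$ is boundedly acyclic, the $H$-invariants of the bounded cochain complex $\CC_b^\bullet(\Gamma;\R)$ compute $\HH_b^\bullet(Q;\R)$, exploiting that boundedly acyclic groups behave like amenable ones for the purpose of building relatively injective resolutions. Either way, since this isomorphism is precisely the extended Mapping Theorem referenced above, in practice I would simply invoke it and record the two-line deduction of the biconditional.
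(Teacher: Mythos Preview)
The paper does not supply its own proof of this proposition; it is quoted as \cite[Corollary~4.2.2]{BAc} and used as a black box. Your proposal is correct and is exactly the argument underlying the cited reference: the substantive input is that an epimorphism with boundedly acyclic kernel induces an isomorphism on bounded cohomology with trivial real coefficients (this is \cite[Theorem~4.1.1]{BAc}, which the present paper also invokes later in Section~\ref{subs:quot}), and from that isomorphism the biconditional is immediate, just as you say.

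Your two suggested routes to the inflation isomorphism are both viable and both appear in the literature. The spectral-sequence route works cleanly here precisely because the hypothesis~$H \in \bac$ forces $\HH_b^q(H;\R) = 0$ for $q \geq 1$, so the usual Hausdorff/seminorm issues that plague the bounded LHS spectral sequence in general simply do not arise on the relevant part of the $E_2$-page; your parenthetical about reduced cohomology is therefore unnecessary in this particular case, though it is the right instinct in general. The resolution-based alternative (showing that $\ell^\infty(\Gamma^{n+1})^H$ gives a relatively injective $Q$-resolution when $H$ is boundedly acyclic) is closer to how \cite{BAc} actually proceeds and is arguably more robust, since it avoids convergence questions entirely.
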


In the situation of Proposition~\ref{prop:ext},
if $H \in \bac$ and $Q$ is amenable, then $\Gamma \in
\bac$. In fact, following a result by Monod and
Popa~\cite{coamenable}, one can strengthen this statement
via co-amenability:

\begin{defi}[Co-amenable subgroups]\label{def:coamenable}
  Let $\Gamma$ be a group and let $H \leq \Gamma$ be a subgroup. We
  say that $H$ is \emph{co-amenable} in $\Gamma$ if there exists a
  $\Gamma$-invariant mean on the space~$\ell^\infty(\Gamma\slash H)$
  of bounded functions on~$\Gamma \slash H$.
\end{defi}

\begin{example}\label{ex:coamenable:subgroups}
  The following are examples of co-amenable subgroups:
  \begin{enumerate}
  \item Suppose that $H$ is normal in $\Gamma$.  Then
    $H$ is co-amenable in~$\Gamma$ if and only if the
    quotient~$\Gamma \slash H$ is amenable.
  \item Let $H$ be a group and let $\varphi \colon H \to H$ a
    monomorphism.  Let $\Gamma = H \ast_\varphi$ be the corresponding
    HNN-extension: Following most of the literature, we will call
    such HNN-extensions \emph{ascending}. Then, $H$ is co-amenable in
    $\Gamma$~\cite[Proposition~2]{coamenable}.
  \item Given a chain~$K < H < \Gamma$ of groups, we have: If $K$
    is co-amenable in~$H$ and $H$ is co-amenable in~$\Gamma$, then $K$
    is also co-amenable in~$\Gamma$~\cite{coamenable}.
  \item Given a chain~$K < H < \Gamma$ of groups, we have: If $K$
    is co-amenable in $\Gamma$, then $H$ is co-amenable in $\Gamma$~\cite{coamenable}.
    \item On the other hand, it is not true in general that
    the co-amenability of~$K$ in~$\Gamma$ implies that $K$ is
    co-amenable in $H$~\cite{coamenable}.
\end{enumerate}
\end{example}

The importance of co-amenability in our setting is evident from the
following result by Monod and Popa:

\begin{prop}[{\cite[Proposition~3]{coamenable}}, {\cite[8.6]{monod}}]
  \label{prop:coam}
  Let $H \leq \Gamma$ be a co-amenable subgroup. Then, the inclusion
  map $i \colon H \to \Gamma$ induces an injective map in bounded
  cohomology:
  $$
  \HH^\bullet_b(i) \colon \HH^\bullet_b(\Gamma; \R) \hookrightarrow \HH^\bullet_b(H; \R)
  $$
  In particular, if $H \in \bac$, then $\Gamma \in \bac$.
\end{prop}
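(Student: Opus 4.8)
The plan is to realise the restriction map $\HH^\bullet_b(i)$ as the map induced by an inclusion of cochain complexes, and then to split that inclusion explicitly using a transfer map built from the $\Gamma$-invariant mean furnished by co-amenability.

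First I would set up a common resolution. Writing $B(\lonech\bullet(\Gamma),\R) = \ell^\infty(\Gamma^{\bullet+1})$ for the bounded functions on $\Gamma^{\bullet+1}$ with the diagonal $\Gamma$-action, the complex $\CC^\bullet_b(\Gamma;\R)$ is by definition its $\Gamma$-invariants. The key structural observation is that $\ell^\infty(\Gamma^{\bullet+1})$ is not only a strong relatively injective resolution of~$\R$ over~$\Gamma$, but, by restricting the action, also over~$H$; this is standard, since $\Gamma^{\bullet+1}$ remains a suitable $H$-set. By the fundamental lemma of (bounded) homological algebra, taking $H$-invariants therefore computes $\HH^\bullet_b(H;\R)$ as well, canonically and compatibly with the identification over~$\Gamma$. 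Under this identification, $\HH^\bullet_b(i)$ is induced by the inclusion of cochain complexes $\iota\colon \ell^\infty(\Gamma^{\bullet+1})^\Gamma \hookrightarrow \ell^\infty(\Gamma^{\bullet+1})^H$, since every $\Gamma$-invariant cochain is a fortiori $H$-invariant.

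Next I would construct the transfer. Let $m$ be a $\Gamma$-invariant mean on $\ell^\infty(\Gamma\slash H)$. For $f \in \ell^\infty(\Gamma^{n+1})^H$ and a tuple $(g_0,\dots,g_n)$, the function $g \mapsto f(g^{-1}g_0,\dots,g^{-1}g_n)$ is bounded and, by $H$-invariance of~$f$, depends only on the coset $gH$, so it descends to an element $\varphi_{(g_0,\dots,g_n)} \in \ell^\infty(\Gamma\slash H)$. I then set $(Tf)(g_0,\dots,g_n) := m\bigl(\varphi_{(g_0,\dots,g_n)}\bigr)$, and verify three properties: (i) $Tf \in \ell^\infty(\Gamma^{n+1})^\Gamma$, which follows from $\Gamma$-invariance of~$m$ once one checks that precomposing the tuple with a global translation by $\gamma^{-1}$ corresponds to the $\Gamma$-action on $\ell^\infty(\Gamma\slash H)$; (ii) $T$ is a chain map, because the simplicial coboundary is an alternating sum of coordinate operations commuting with the uniform translation $g^{-1}\cdot$, and $m$ is linear; and (iii) $T\circ\iota = \id$, since for $\Gamma$-invariant~$f$ the function $\varphi_{(g_0,\dots,g_n)}$ is constant equal to $f(g_0,\dots,g_n)$ and $m$ is unital. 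Passing to cohomology, (ii) and (iii) give $\HH^\bullet_b(T)\circ\HH^\bullet_b(i) = \id$, so $\HH^\bullet_b(i)$ is split injective. The ``in particular'' is then immediate: if $H \in \bac$, then $\HH^n_b(H;\R) \cong 0$ for all $n\ge 1$, and the injection $\HH^n_b(\Gamma;\R) \hookrightarrow \HH^n_b(H;\R) = 0$ forces $\HH^n_b(\Gamma;\R)\cong 0$, i.e.\ $\Gamma \in \bac$.

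The main obstacle I expect is the well-definedness in step~(i): one must fix the left/right conventions so that $H$-invariance of~$f$ makes $\varphi_{(g_0,\dots,g_n)}$ genuinely descend to $\Gamma\slash H$, and so that global translation of tuples matches the $\Gamma$-action on $\ell^\infty(\Gamma\slash H)$ under which $m$ is invariant; once this bookkeeping is settled, boundedness, the chain-map property, and the splitting are formal. A secondary point requiring care is the claim that $\ell^\infty(\Gamma^{\bullet+1})$ computes $\HH^\bullet_b(H;\R)$ upon taking $H$-invariants, which rests on the homological algebra of strong relatively injective resolutions rather than on the mean itself.
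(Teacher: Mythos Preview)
The paper does not supply its own proof of this proposition; it is stated with citations to Monod--Popa and to Monod's book and used as a black box. Your argument is correct and is precisely the standard transfer construction found in those references: compute both $\HH^\bullet_b(\Gamma;\R)$ and $\HH^\bullet_b(H;\R)$ via the single strong relatively injective resolution~$\ell^\infty(\Gamma^{\bullet+1})$, so that $\HH^\bullet_b(i)$ is induced by the inclusion of invariants, and split it by averaging with the $\Gamma$-invariant mean on~$\ell^\infty(\Gamma/H)$. The bookkeeping you flag (that $H$-invariance of~$f$ makes $g\mapsto f(g^{-1}g_0,\dots,g^{-1}g_n)$ constant on left cosets~$gH$, and that translating the tuple by~$\gamma$ corresponds to the left $\Gamma$-action on~$\ell^\infty(\Gamma/H)$) works out exactly as you indicate, so there is nothing to add.
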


We will see in Corollary~\ref{cor:HNN:nonsurj} that $\HH^\bullet_b(i)$
is, in general, far from being surjective.

\begin{cor}\label{cor:HNNbac}
  Ascending HNN-extensions of groups in~$\bac$ are in~$\bac$.
\end{cor}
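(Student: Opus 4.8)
The plan is to reduce the statement directly to the two results just established, so that no new argument involving the HNN construction itself is required. Given a group $H \in \bac$ and a monomorphism $\varphi \colon H \to H$, I would consider the ascending HNN-extension $\Gamma = H \ast_\varphi$; by definition this is exactly the class of HNN-extensions singled out in Example~\ref{ex:coamenable:subgroups}(2), so the only preliminary step is to note that the hypothesis matches verbatim.

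First I would invoke Example~\ref{ex:coamenable:subgroups}(2), which asserts precisely that the base group $H$ is co-amenable in $\Gamma = H \ast_\varphi$ in the ascending case, i.e.\ when the stable letter conjugates $H$ into its image $\varphi(H)$. This is the sole place where the word ``ascending'' enters, and it is what allows Monod--Popa's result to supply co-amenability; for a general (non-ascending) HNN-extension this input is unavailable, which is why the corollary is stated only for ascending ones.

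Second, with co-amenability of $H$ in $\Gamma$ in hand and the standing assumption $H \in \bac$, I would apply Proposition~\ref{prop:coam}: co-amenability of $H$ in $\Gamma$ forces the inclusion-induced map $\HH^\bullet_b(\Gamma;\R) \hookrightarrow \HH^\bullet_b(H;\R)$ to be injective in every degree, and since the target vanishes in all positive degrees by $H \in \bac$, so does the source. Hence $\Gamma \in \bac$, which is the claim.

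There is essentially no obstacle here: both substantive inputs are quoted results, and the purpose of this corollary is to record their combination in a form convenient for later use in Sections~\ref{sec:finitely:gen} and~\ref{sec:finitely:pres}. The only point demanding care is bookkeeping, namely checking that the definition of ``ascending HNN-extension'' intended in the statement coincides exactly with the hypothesis ``$\varphi \colon H \to H$ a monomorphism'' of Example~\ref{ex:coamenable:subgroups}(2), so that the co-amenability input transfers without modification.
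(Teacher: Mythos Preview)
Your proposal is correct and matches the paper's proof exactly: the paper simply writes ``We combine Proposition~\ref{prop:coam} and Example~\ref{ex:coamenable:subgroups}.2.'' Your more verbose version spells out precisely this combination, with the same two inputs in the same order.
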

\begin{proof}
  We combine Proposition~\ref{prop:coam} and
  Example~\ref{ex:coamenable:subgroups}.2.
\end{proof}
  
The following example shows that Corollary~\ref{cor:HNNbac} 
does not hold for general HNN-extensions.

\begin{example}
  The group $\BS(2, 3)$ is an HNN-extension of $\Z$ along finite-index
  subgroups isomorphic to $\Z$.  However, $\BS(2, 3)$ is not a
  boundedly acyclic group. Indeed, it admits non-trivial
  quasimorphisms~\cite{Grigorchuk:free, Fujiwara:free}, whence it has
  non-trivial second bounded cohomology group.
\end{example}

\subsection{Quotients and $\lex$ groups}
\label{subs:quot}

One of the most remarkable properties of mitotic groups is that they
are closed under quotients~\cite[Appendix~B]{BerrickDic}.  Since the
same property is also true for amenable
groups~\cite[Proposition~3.4]{Frigerio}, a natural question is whether
the quotient of a boundedly acyclic group is still boundedly acyclic.

This problem is related to the problem of showing existence of a
non-$\lex$ group, i.e., of groups that are not left-exact in the
following sense:

\begin{defi}[$\lex$ groups~\cite{Bouarich}]
  We say that a group $\Gamma$ lies in the family $\lex$ if it
  satisfies the following left-exactness property: For every
  group~$\Lambda$ and every epimorphism $\psi \colon \Lambda \to
  \Gamma$, the induced map~$\HH^\bullet_b(\psi)$ in bounded cohomology
  with trivial real coefficients is injective in all degrees.
\end{defi}

It is an open problem to find examples of groups that do not lie in
$\lex$.  On the other hand, in degree $2$ the situation is understood:
Every group epimorphism induces an injective map between the second
bounded cohomology groups~\cite{Bouarich2} (in fact the induced map is
even isometric~\cite[Theorem~2.14]{Huber} and the result also holds
for a larger family of coefficients~\cite[Example~4.1.2]{BAc}).

Of course amenable groups and, more generally, boundedly acyclic groups
lie in $\lex$. Some more interesting examples are for
instance:
\begin{itemize}
\item Free groups;
\item Fuchsian groups~\cite[Corollaire~3.9]{Bouarich};
\item Fundamental groups of geometric
  $3$-manifolds~\cite[Corollaire~3.13 and p.~267]{Bouarich}
  (together with Agol's proof of Thurston's Virtual Fibering
  Conjecture~\cite{Agol}).
\item The class~$\lex$ is closed under the following constructions:
  Quotients by amenable subgroups, extensions of an amenable group by
  an element in $\lex$~\cite[Proposition~3.16]{Bouarich} and free
  products of amenable groups amalgamated over a common normal
  subgroup~\cite[Corollaire~3.17]{Bouarich}.
\end{itemize} 

\begin{rem}
  Following Bouarich's proof~\cite[Proposition~3.16]{Bouarich}, one
  can in fact deduce the corresponding statement for boundedly acyclic
  groups.  Namely, $\lex$ is also closed under the following
  constructions:
  \begin{itemize}
  \item Quotients by boundedly acyclic groups;
  \item Extensions of a boundedly acyclic group by an element in $\lex$.
  \end{itemize}
  The proof follows \emph{verbatim} the one by Bouarich with the
  additional fact that epimorphisms with boundedly acyclic kernels
  induce isomorphism in all bounded cohomology groups with trivial
  real coefficients~\cite[Theorem~4.1.1]{BAc}
\end{rem}

The connection between quotients of $\bac$ groups and $\lex$ groups is
given in the following proposition:

\begin{prop}
  \label{prop:bac:quot}
  The family $\bac$ is closed under quotients if and only if all 
  quotients of boundedly acyclic groups lie in $\lex$.
\end{prop}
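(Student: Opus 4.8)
The plan is to prove the two implications separately by directly unwinding the definitions, the only external input being the already-recorded fact that every boundedly acyclic group lies in~$\lex$.

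For the forward implication, I would assume that $\bac$ is closed under quotients and take an arbitrary quotient~$Q$ of a group~$\Gamma \in \bac$. By hypothesis $Q \in \bac$, and since $\bac \subseteq \lex$ (as observed right after the definition of~$\lex$), we immediately conclude $Q \in \lex$. Thus every quotient of a boundedly acyclic group lies in~$\lex$, and this direction requires essentially no work.

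For the converse, I would assume that every quotient of a boundedly acyclic group lies in~$\lex$, fix $\Gamma \in \bac$, and let $\psi \colon \Gamma \to Q$ be an epimorphism onto a quotient~$Q$. By hypothesis $Q \in \lex$, so applying the defining property of~$\lex$ to the epimorphism~$\psi$, the induced map $\HH^\bullet_b(\psi) \colon \HH^\bullet_b(Q;\R) \to \HH^\bullet_b(\Gamma;\R)$ is injective in all degrees. Since $\Gamma$ is boundedly acyclic, the target vanishes in every positive degree; injectivity then forces $\HH^n_b(Q;\R) \cong 0$ for all $n \geq 1$, that is, $Q \in \bac$. Hence $\bac$ is closed under quotients.

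The argument is purely formal, so I do not expect a genuine obstacle; the only point requiring care is to track the contravariant direction of the induced map in bounded cohomology and to invoke the correct defining property of~$\lex$, namely injectivity of~$\HH^\bullet_b(\psi)$ for epimorphisms~$\psi$ \emph{onto} the $\lex$ group. The conceptual content lies entirely in the prior inclusion $\bac \subseteq \lex$, which trivialises the forward direction and reduces the converse to that defining injectivity property.
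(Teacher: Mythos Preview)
Your proof is correct and follows essentially the same approach as the paper. The only cosmetic difference is that the paper proves the converse by contrapositive (exhibiting a non-$\lex$ quotient whenever $\bac$ fails to be closed under quotients), whereas you argue directly; the logical content is identical.
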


\begin{proof}
  Since boundedly acyclic groups lie in $\lex$, one implication
  trivially holds.  Vice versa, if we assume that $\bac$ is not closed
  under quotients, there exists an epimorphism $\psi \colon
  \Gamma \to \Lambda$ with $\Gamma \in \, \bac$ and $\Lambda \not\in
  \bac$.  Then, $\psi$ cannot induce an injective map in bounded
  cohomology for all degrees.  This shows that $\Lambda \not\in \lex$,
  whence the thesis.
\end{proof}

This proposition provides a strategy to find a non-$\lex$ group:
namely, it would suffice to exhibit a quotient of a $\bac$ group that
is not in~$\bac$.  Note that by the discussion above, every quotient
of a $\bac$ group has vanishing second bounded cohomology with trivial
real coefficients.  Several groups of geometric nature are then
natural candidates for a counterexample:

\begin{example}
  Let $X$ be an $n$-dimensional irreducible symmetric space of
  non-compact type, and $G$ the associated Lie group. Let $\Gamma < G$
  be a torsion-free cocompact lattice.  Then $\| \Gamma \backslash X
  \| > 0$~\cite{Lafont_Schmidt}, and so by Gromov's Duality
  Principle~\cite{vbc} it follows that $\HH^n_b(\Gamma; \R) \not\cong
  0$ (in fact something can be said about lower degrees as
  well~\cite{Lafont_Wang}).

  On the other hand, if $X$ is not Hermitian symmetric and has real
  rank at least $3$, then $\HH^2_b(\Gamma; \R) \cong 0$~\cite{lattices}.
\end{example}

%%%%%%

\section{Finitely generated boundedly acyclic groups}\label{sec:finitely:gen}

In this section, we show that each finitely generated group embeds into
a finitely generated boundedly acyclic group. The latter will always
contain a non-abelian free group, providing the first examples of (an
infinite family of) non-amenable finitely generated boundedly acyclic
groups.

\begin{thm}
  \label{thm:fg}
  There exists a functor $\mu \colon \Groups \to \Groups$ associating
  to each group $\Gamma$ a boundedly acyclic group $\mu(\Gamma)$ into
  which $\Gamma$ embeds.  The group $\mu(\Gamma)$ has the following
  properties:
  \begin{enumerate}
  \item Torsion elements of $\mu(\Gamma)$ are conjugate to elements of $\Gamma$.
  \item If $\Gamma$ is infinite, then $\mu(\Gamma)$ has the same
    cardinality as $\Gamma$, otherwise $\mu(\Gamma)$ is countably
    infinite.
  \item The group~$\mu(\Gamma)$ contains a non-abelian free subgroup.
  \item If $\Gamma$ is $n$-generated, then $\mu(\Gamma)$ is
    $(n+3)$-generated. In particular, if $\Gamma$ is finitely generated,
    then $\mu(\Gamma)$ is finitely generated.
\end{enumerate}
\end{thm}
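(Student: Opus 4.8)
The plan is to embed $\Gamma$ functorially into a boundedly acyclic \emph{mitotic} group and then to collapse its infinitely many generators to finitely many by a single ascending HNN-extension. Concretely, I would start from the standard mitotic embedding $\Gamma \hookrightarrow m^\infty(\Gamma) = \bigcup_{i \geq 0} m^i(\Gamma)$ (Example~\ref{ex:standard:mitotic:embedding}), which is boundedly acyclic by Theorem~\ref{thm:mitotic:bac}. Functoriality of $m$ and the unit $\eta_\Gamma \colon \Gamma \to m(\Gamma)$, combined with the cofinality identification $m^\infty(m(\Gamma)) \cong m^\infty(\Gamma)$, produce a natural monomorphism $\varphi \colon m^\infty(\Gamma) \to m^\infty(\Gamma)$ that fixes $\Gamma$ pointwise and shifts the level-$i$ mitosis generators upward, $s^{(i)} \mapsto s^{(i+1)}$ and $d^{(i)} \mapsto d^{(i+1)}$. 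Its image omits $s^{(1)}, d^{(1)}$, so $\varphi$ is a proper self-embedding, and I set
\[ \mu(\Gamma) := m^\infty(\Gamma) \ast_\varphi = \genrel{m^\infty(\Gamma),\, t}{t^{-1} x t = \varphi(x)}. \]
Since this is an ascending HNN-extension, $m^\infty(\Gamma)$ is co-amenable in it (Example~\ref{ex:coamenable:subgroups}.2) and Corollary~\ref{cor:HNNbac} yields $\mu(\Gamma) \in \bac$; the base embeds as $\Gamma \hookrightarrow m^\infty(\Gamma) \hookrightarrow \mu(\Gamma)$, and naturality of $\eta$ (hence of $\varphi$) makes $\mu$ a functor, acting on stable letters by $t \mapsto t$.

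Clauses (2)--(4) then follow quickly. Conjugation by $t$ realises $\varphi$, so $t^{-(i-1)} s^{(1)} t^{i-1} = s^{(i)}$ and likewise for $d$; hence $\Gamma \cup \{s^{(1)}, d^{(1)}, t\}$ generates $\mu(\Gamma)$, which gives $(n+3)$ generators once $\Gamma$ is $n$-generated -- clause~(4). Clause~(3), and with it non-amenability, is Remark~\ref{rem:mitosis:free} applied to $\langle s^{(1)}, d^{(1)}\rangle \leq m(\Gamma) \leq \mu(\Gamma)$. For clause~(2) I would simply count cardinalities: each mitosis preserves the cardinality of an infinite group and turns a finite group into a countably infinite one, the countable union defining $m^\infty$ preserves this, and adjoining the single generator $t$ changes nothing.

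The genuinely delicate point is clause~(1). Passing torsion through the final HNN-extension is routine: by the torsion theorem for HNN-extensions (Britton's lemma) every finite-order element of $\mu(\Gamma)$ is conjugate into the base $m^\infty(\Gamma)$, and such an element already lies in some $m^i(\Gamma)$. The obstacle is that the \emph{standard} mitosis of Example~\ref{ex:standard:mitosis} is built on $\Gamma \times \Gamma$, which manufactures new torsion: if $g, h \in \Gamma$ have coprime orders, then $(g,h) \in \Gamma \times \Gamma \subseteq m(\Gamma)$ has order $|g|\cdot|h|$ and need not be conjugate to any element of $\Gamma$ (for instance $\Gamma = \Z/2\Z \ast \Z/3\Z$ has no element of order~$6$). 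Thus the naive choice of mitotic group defeats clause~(1), and the heart of the argument is to replace $m$ by the torsion-controlled mitotic embedding of Baumslag--Dyer--Miller~\cite{BDM}, for which every finite-order element of each iterate is conjugate to an element of $\Gamma$; feeding this into the HNN torsion theorem above then gives clause~(1). I expect the main work to lie precisely here -- namely, in checking that this alternative construction is still functorial, still carries a shift monomorphism $\varphi$, and still contributes exactly two new generators per step, so that the bookkeeping behind clauses~(2)--(4) survives unchanged.
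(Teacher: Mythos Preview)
Your construction and your arguments for bounded acyclicity, functoriality, and clauses~(2)--(4) are exactly those of the paper: the same $\mu(\Gamma) = m^\infty(\Gamma) *_\varphi$ with the shift monomorphism~$\varphi$, the same appeal to Corollary~\ref{cor:HNNbac} for bounded acyclicity, the same generator count $\Gamma \cup \{s_1,d_1,t\}$, and the same free subgroup via Remark~\ref{rem:mitosis:free}.

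For clause~(1) you have actually gone further than the paper. The paper's entire proof of~(1) is the single line ``all torsion in an HNN-extension is conjugate into the base group'', which only carries torsion from~$\mu(\Gamma)$ down to~$m^\infty(\Gamma)$ and says nothing about the passage from~$m^\infty(\Gamma)$ to~$\Gamma$. Your observation that the standard mitosis contains $\Gamma \times \Gamma$ and can therefore create new torsion orders is correct, and your $\Z/2 * \Z/3$ example shows that clause~(1), read literally, fails for the very construction the paper uses. So your instinct to swap in a torsion-controlled mitotic embedding is sound, though it goes beyond what the paper does. That said, the only downstream use of clause~(1) in the paper is to control $p$-torsion for primes~$p$ (Corollaries~\ref{cor:5:gen:and:tors} and~\ref{cor:bac:cont}), and \emph{that} weaker statement does survive the standard construction: an element of prime order~$p$ in~$\Gamma \times \Gamma$ projects to an element of order~$p$ in one of the factors, so inductively $m^\infty(\Gamma)$ (and hence~$\mu(\Gamma)$) has $p$-torsion if and only if $\Gamma$ does. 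If you only need the corollaries, you can keep the standard~$m$ and weaken the statement of~(1); if you want~(1) as written, your proposed modification is the right move.
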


\begin{proof}
  We construct our functor starting with the standard mitosis of
  $\Gamma$ (Example~\ref{ex:standard:mitosis}): Every group $\Gamma$
  embeds into its standard mitosis
  $$m(\Gamma) = \langle \Gamma; s, d \mid d^{-1} g d = g s^{-1} g s, [g, s^{-1} h s] = 1 : g, h \in \Gamma \rangle.$$
  To make the notation more transparent, let us denote $s$ by $s_1$
  and $d$ by $d_1$.  Then, we iterate the process as in the standard
  mitotic embedding (Example~\ref{ex:standard:mitotic:embedding}):
  Denoting by $s_i$ and $t_i$ the new generators of $m^i(\Gamma)$, we
  obtain the directed union (colimit) $m^\infty(\Gamma)$, which is
  generated by~$\Gamma$, together with~$\{ s_1, d_1, \ldots \}$.
  There exists a self-monomorphism $\varphi$ of~$m^\infty(\Gamma)$ given by $g
  \mapsto g, s_i \mapsto s_{i+1}, d_i \mapsto d_{i+1}$~\cite[p.~20]{BDH}.
  We now set $\mu(\Gamma)$ to be the
  ascending HNN-extension $m^\infty(\Gamma) \ast_\varphi$:
  $$
  \mu(\Gamma) \coloneqq \langle m^\infty(\Gamma); t \mid t^{-1} x t = \varphi(x) : x \in m^\infty(\Gamma) \rangle.
  $$
  Notice that $\mu$ is in fact a functor, because it is constructed
  via iterated standard mitoses (Example~\ref{ex:standard:mitosis})
  followed by an HNN-extension. Moreover, $\mu(\Gamma)$ is a boundedly
  acyclic group because it is an ascending HNN-extension of the mitotic
  group $m^\infty(\Gamma)$ (Theorem~\ref{thm:mitotic:bac} and
  Corollary~\ref{cor:HNNbac}). Finally, by construction, $\Gamma$
  embeds into $\mu(\Gamma)$ (Remark~\ref{rem:HNN:embedding}).

  We are left to check that $\mu(\Gamma)$ satisfies the properties
  \emph{(1)--(4)}.

  \emph{Ad~1}. The statement on torsion follows from the fact that all
  torsion in an HNN-extension is conjugate into the base
  group~\cite{HNN}.

  \emph{Ad~2}. The statement on the cardinality follows from
  the fact that HNN-extensions of infinite groups preserve the
  cardinality, while HNN-extensions of finite groups are countably infinite.

  \emph{Ad~3}. By Remark~\ref{rem:mitosis:free}, the subgroup
  of~$m(\Gamma)$ generated by~$s_1$ and~$d_1$ is free of rank~$2$.
  Since $m(\Gamma)$ also embeds into $\mu(\Gamma)$
  (Remark~\ref{rem:HNN:embedding}), we also have that $\mu(\Gamma)$
  contains a non-abelian free group.

  \emph{Ad~4}. It is immediate to check that the generators of
  $\Gamma$, together with $s_1, d_1$ and $t$, suffice to generate
  $\mu(\Gamma)$. This shows that if $\Gamma$ is $n$-generated, then
  $\mu(\Gamma)$ is $(n+3)$-generated, whence the claim.
\end{proof}

\begin{rem}
  Notice that $\mu(\Gamma)$ is not acyclic. Indeed, the presentation
  of $\mu(\Gamma)$ shows that its abelianization is an infinite cyclic
  group, whence $\HH_1(\mu(\Gamma); \mathbb{Z}) \cong \mathbb{Z}$
  ~\cite[p.~20]{BDH}. Nevertheless, it is worth mentioning that
  $\HH_n(\mu(\Gamma); \mathbb{Z}) \cong 0$ for all $n > 1$, because 
  $m^\infty(\Gamma)$ is acyclic~\cite[Theorem~4.2; p.~20]{BDH}.
\end{rem}

\begin{cor}\label{cor:5:gen:and:tors}
  Every countable group $\Gamma$ embeds into a $5$-generated
  non-amenable boundedly acyclic group that has $p$-torsion if and
  only if $\Gamma$ does.
\end{cor}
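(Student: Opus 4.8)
The plan is to assemble the corollary directly from Theorem~\ref{thm:HNN} and the functor $\mu$ of Theorem~\ref{thm:fg}, exploiting the torsion-control built into both constructions. First, given a countable group $\Gamma$, I would invoke Theorem~\ref{thm:HNN} to embed $\Gamma$ into a $2$-generator group $K$ with the property that, for every prime $p$, the group $K$ contains $p$-torsion if and only if $\Gamma$ does. This step replaces the (possibly infinite) generating set of $\Gamma$ by two generators while leaving the torsion spectrum unchanged.

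Next, I would apply $\mu$ to $K$ and take $\mu(K)$ as the desired group. Since $K$ is $2$-generated, property~(4) of Theorem~\ref{thm:fg} gives that $\mu(K)$ is $(2+3)=5$-generated. By the same theorem, $\mu(K)$ is boundedly acyclic and, by property~(3), contains a non-abelian free subgroup, hence is non-amenable. The embedding $\Gamma \hookrightarrow K$ from the first step composed with the embedding $K \hookrightarrow \mu(K)$ supplied by Theorem~\ref{thm:fg} shows that $\Gamma$ embeds into $\mu(K)$.

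It then remains to track the torsion through the two constructions. By property~(1) of Theorem~\ref{thm:fg}, every torsion element of $\mu(K)$ is conjugate to an element of $K$; since conjugation preserves order, for each prime $p$ the group $\mu(K)$ contains $p$-torsion exactly when $K$ does. Composing this with the torsion-preservation of Theorem~\ref{thm:HNN} yields that $\mu(K)$ contains $p$-torsion if and only if $\Gamma$ does, for every prime $p$. As the whole argument is a direct concatenation of already-established results, there is no genuine obstacle; the only point requiring mild care is this final bookkeeping, where one must check that the two torsion-preservation statements — one across the HNN-based embedding into $K$, and one across the embedding $K \hookrightarrow \mu(K)$ — compose correctly.
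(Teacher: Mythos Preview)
Your proposal is correct and follows exactly the route the paper takes: embed~$\Gamma$ into a $2$-generated group~$K$ via Theorem~\ref{thm:HNN}, then apply~$\mu$ and read off bounded acyclicity, non-amenability (part~(3)), the generator bound (part~(4)), and torsion control (part~(1)). The paper's proof is a one-line pointer to these ingredients, whereas you spell out the torsion bookkeeping explicitly; there is no substantive difference.
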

\begin{proof}
  This is the
  combination of parts~(1), (3) and~(4) of Theorem~\ref{thm:fg} 
  together with Theorem~\ref{thm:HNN}.
\end{proof}

Similar embedding results for \emph{mixing} coefficients
have been obtained by Monod~\cite[Proposition~6.5]{monod_sarithmetic}.

Using Corollary~\ref{cor:5:gen:and:tors}, the same proof as
Corollary~\ref{cor:HNN:cont} gives:

\begin{cor}
  \label{cor:bac:cont}
  There exist continuum many $5$-generated non-amenable
  boundedly acyclic groups, which are pairwise distinguished by their torsion.
\end{cor}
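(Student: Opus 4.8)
The plan is to reduce the statement to Corollary~\ref{cor:5:gen:and:tors} by the same counting argument over subsets of the primes that proves Corollary~\ref{cor:HNN:cont}. The substantive work---bounded acyclicity, non-amenability, the bound on the number of generators, and the faithful tracking of torsion---is already packaged in Corollary~\ref{cor:5:gen:and:tors}, so all that remains is bookkeeping.

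First I would, for each subset $P$ of the set of prime numbers, form the countable abelian group $A_P := \bigoplus_{p \in P} \Z/p\Z$. By construction the set of primes $p$ for which $A_P$ contains an element of order~$p$ is exactly~$P$; that is, $A_P$ has $p$-torsion if and only if $p \in P$. Next I would apply Corollary~\ref{cor:5:gen:and:tors} to $A_P$, obtaining a $5$-generated non-amenable boundedly acyclic group $G_P$ into which $A_P$ embeds and which has $p$-torsion if and only if $A_P$ does, hence if and only if $p \in P$. Thus each $G_P$ has all the required properties.

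Finally I would observe that distinct subsets yield non-isomorphic groups. Since the set of primes occurring as orders of torsion elements is an isomorphism invariant, $G_P \cong G_{P'}$ forces $P = P'$. As there are $2^{\aleph_0} = |\R|$ subsets of the countably infinite set of primes, the assignment $P \mapsto G_P$ produces continuum many pairwise non-isomorphic $5$-generated non-amenable boundedly acyclic groups, which are pairwise distinguished by their torsion.

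I do not expect any genuine obstacle here: the only point requiring the slightest care is the observation that ``distinguished by their torsion'' suffices for pairwise non-isomorphism, and this is immediate because the torsion spectrum is preserved under isomorphism. Everything else is inherited verbatim from Corollary~\ref{cor:5:gen:and:tors}.
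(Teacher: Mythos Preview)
Your proof is correct and is exactly the argument the paper intends: it explicitly says to combine Corollary~\ref{cor:5:gen:and:tors} with the same counting over subsets of primes used in Corollary~\ref{cor:HNN:cont}, which is precisely what you wrote out.
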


\begin{rem}
\label{rem:continuum:tf:bac}

In fact, there exist continuum many \emph{torsion-free} $5$-generated non-amenable boundedly acyclic groups, which are pairwise non-isomorphic.
Indeed, there exist continuum many pairwise non-isomorphic $2$-generated torsion-free groups $(\Gamma_i)_{i \in I}$, by Remark~\ref{rem:continuum:tf}.
Now each group $\mu(\Gamma_i)$ is finitely generated, in particular it is countable, and so has only countably many finitely generated subgroups.
Therefore there must be continuum many distinct isomorphism types in the collection $(\mu(\Gamma_i))_{i \in I}$ as well.
Finally, they are torsion-free by Theorem~\ref{thm:fg}.
\end{rem}

One natural question is whether the same construction leads to
finitely \emph{presented} boundedly acyclic groups. Unfortunately, as
in the classical case of acyclic groups, this is never the
case~\cite[Theorem~5.6]{BDH}.  Let us recall the following
construction by Baumslag, Dyer and Heller~\cite[Section~5]{BDH}: Let
$A$ be a finitely presented torsion-free acyclic group with generators
$a, b, c$ (one such example can be found in~\cite[Section~3]{BDH}),
and let $A(\Gamma) \coloneqq \mu(\Gamma) *_{t = a} A$.  Then,
$A(\Gamma)$ has the following properties:
\begin{enumerate}
\item[(P1)] The group~$A(\Gamma)$ is never finitely presented~\cite[Theorem
  5.6]{BDH};
\item[(P2)] If $A(\Gamma)$ is finitely generated, then so is
  $\Gamma$~\cite[Lemma 5.7]{BDH};
\item[(P3)] If $\Gamma$ is recursively presented, then so is
  $A(\Gamma)$~\cite[p.~38]{BDM}.
\end{enumerate}
Using this group we can deduce the following additional
properties of $\mu(\Gamma)$:

\begin{prop}\label{prop:add:prop:mu:gamma}
  The functor $\mu \colon \Groups \to \Groups$ associating to each
  group $\Gamma$ the boundedly acyclic group $\mu(\Gamma)$ also
  satisfies the following properties:
  \begin{enumerate}
  \item The group~$\mu(\Gamma)$ is finitely generated if and only if $\Gamma$ is;
  \item The group~$\mu(\Gamma)$ is never finitely presented;
  \item If $\Gamma$ is recursively presented, then so is $\mu(\Gamma)$.
  \end{enumerate}
\end{prop}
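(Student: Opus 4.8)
The plan is to route everything through the auxiliary group $A(\Gamma) = \mu(\Gamma) *_{t=a} A$ and its properties (P1)--(P3), exploiting that $\mu(\Gamma)$ sits inside $A(\Gamma)$ as one factor of the amalgam and that $A = \langle a,b,c\rangle$ is finitely presented and torsion-free; parts (1), (2), (3) will then be handled by (P2), (P1), (P3) respectively. For part~(1), the backward implication is immediate from Theorem~\ref{thm:fg}(4), since an $n$-generated $\Gamma$ yields an $(n+3)$-generated $\mu(\Gamma)$. For the forward implication I would pass to $A(\Gamma)$: if $\mu(\Gamma)$ is finitely generated then, $A$ being finitely generated, the amalgam $A(\Gamma)$ is finitely generated as well, and (P2) then forces $\Gamma$ to be finitely generated.

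For part~(2), I would assume toward a contradiction that $\mu(\Gamma)$ is finitely presented. The amalgamation defining $A(\Gamma)$ is performed over the cyclic subgroup $\langle t\rangle = \langle a\rangle$, which is infinite cyclic since the stable letter $t$ of an HNN-extension has infinite order and $A$ is torsion-free. Since an amalgamated free product of two finitely presented groups over a finitely generated (here, singly generated) subgroup is again finitely presented, $A(\Gamma)$ would be finitely presented, contradicting (P1); hence $\mu(\Gamma)$ is never finitely presented. The only point to check is that the identification $t=a$ contributes a single amalgamation relation, which is clear as the amalgamated subgroup is generated by one element.

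For part~(3), I would combine (P3) with Higman's embedding theorem (Theorem~\ref{thm:Higman}) applied twice. If $\Gamma$ is recursively presented, then so is $A(\Gamma)$ by (P3), whence $A(\Gamma)$ embeds into a finitely presented group $P$; since $\mu(\Gamma)$ embeds into $A(\Gamma)$, it embeds into $P$ as well, and a second application of Higman's theorem returns that $\mu(\Gamma)$ is recursively presented. The step I expect to require the most care is this backward use of Higman for the potentially infinitely generated subgroup $\mu(\Gamma)$: one must ensure the resulting recursive presentation is genuinely effective. This causes no trouble because the inclusions $\mu(\Gamma)\hookrightarrow A(\Gamma)\hookrightarrow P$ are explicit, so the natural generators of $\mu(\Gamma)$ map to computable words in $P$ and the relations among them form a recursively enumerable set, being exactly the words in those generators that lie in the recursively enumerable word problem of the finitely presented group $P$. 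Should one wish to avoid invoking Higman in this direction, part~(3) can instead be proved directly from the construction of $\mu(\Gamma)$: the explicit presentation of the standard mitosis shows that $m(\Gamma)$ is recursively presented whenever $\Gamma$ is, this is inherited uniformly by the colimit $m^\infty(\Gamma)$, and the final ascending HNN-extension along the computable endomorphism $\varphi$ preserves recursive presentability.
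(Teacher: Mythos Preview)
Your proposal is correct and follows essentially the same route as the paper: all three parts are reduced to properties (P1)--(P3) of~$A(\Gamma)$, using Theorem~\ref{thm:fg}(4) for the easy direction of~(1), the fact that amalgams of finitely presented groups over finitely generated subgroups are finitely presented for~(2), and the closure of recursive presentability under passing to subgroups for~(3). The only cosmetic difference is that in~(3) the paper quotes directly that subgroups of recursively presented groups are recursively presented, whereas you unpack this via Higman's embedding theorem; these are the same argument.
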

\begin{proof}
  We can easily deduce all the properties from the ones of
  $A(\Gamma)$:

  \emph{Ad~1}. We have already shown in Theorem~\ref{thm:fg}.4 that
  if $\Gamma$ is finitely generated, then also $\mu(\Gamma)$ is
  finitely generated. Vice versa, if $\mu(\Gamma)$ is finitely
  generated, then so is $A(\Gamma)$. This shows that $\Gamma$ is
  also finitely generated by~(P2) of~$A(\Gamma)$.

  \emph{Ad~2}. By contradiction, assume that $\mu(\Gamma)$ is
  finitely presented.  Then also $A(\Gamma)$ is finitely presented
  being a free product of finitely presented groups amalgamated along
  a finitely generated subgroup.  This leads to a contradiction ((P1)
  of $A(\Gamma)$).

  \emph{Ad~3} If $\Gamma$ is recursively presented, we already know
  that $A(\Gamma)$ is too ((P3) of~$A(\Gamma)$). This implies that
  $\mu(\Gamma)$ is also recursively presented, as subgroups of
  recursively presented groups are recursively presented~\cite{Higman}.
\end{proof}

The previous result has an important consequence, which will be a
building block in our construction of a finitely
presented non-amenable boundedly acyclic group (Theorem~\ref{thm:fp:bdd:acylic}).
Let $U$ denote a universal finitely presented group
(Theorem~\ref{thm:Higman:uni}). Then we have the following:

\begin{cor}\label{cor:muG:embeds:into:U}
  Let $\Gamma$ be a finitely presented group. Then, $\mu(\Gamma)$
  embeds into~$U$.
\end{cor}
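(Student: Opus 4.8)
The plan is to show that $\mu(\Gamma)$ is recursively presented and then invoke Higman's embedding theorem. Since $\Gamma$ is finitely presented, it is in particular recursively presented, so by Proposition~\ref{prop:add:prop:mu:gamma}.3 the group $\mu(\Gamma)$ is recursively presented as well. Now Theorem~\ref{thm:Higman} tells us that every recursively presented group embeds into some finitely presented group.

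To land inside the \emph{universal} group $U$ rather than just some finitely presented group, I would combine this with Theorem~\ref{thm:Higman:uni}: by definition $U$ contains an isomorphic copy of every finitely presented group. So the chain would be
\[
  \mu(\Gamma) \hookrightarrow K \hookrightarrow U,
\]
where $K$ is the finitely presented group furnished by Higman's embedding theorem applied to the recursively presented group $\mu(\Gamma)$, and the second embedding comes from universality of $U$. Composing these two monomorphisms gives the desired embedding $\mu(\Gamma) \hookrightarrow U$.

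There is essentially no hard part here: the proof is a two-line assembly of results already established in the excerpt. The only point worth checking carefully is that the hypotheses of Proposition~\ref{prop:add:prop:mu:gamma}.3 are actually met, i.e. that a finitely presented $\Gamma$ is recursively presented — this is immediate from Definition~\ref{def:rec:pres:groups}, since a finite set of relations is trivially recursively enumerable. Once that is noted, the corollary follows formally, so I would write it as a short proof simply citing Proposition~\ref{prop:add:prop:mu:gamma}, Theorem~\ref{thm:Higman}, and Theorem~\ref{thm:Higman:uni} in sequence.
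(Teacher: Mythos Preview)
Your proposal is correct and follows essentially the same argument as the paper: use Proposition~\ref{prop:add:prop:mu:gamma}.3 to see that $\mu(\Gamma)$ is recursively presented, apply Higman's embedding theorem (Theorem~\ref{thm:Higman}) to embed it into a finitely presented group, and then use universality of~$U$ (Theorem~\ref{thm:Higman:uni}).
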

\begin{proof}
  By definition of $U$ we know that every finitely presented group
  embeds into~$U$.  So it is sufficient to show that we can embed
  $\mu(\Gamma)$ into a finitely presented group.  Since $\Gamma$ is
  finitely presented (whence recursively presented),
  Proposition~\ref{prop:add:prop:mu:gamma}.3 shows that $\mu(\Gamma)$ is
  also recursively presented.  Hence, by Theorem~\ref{thm:Higman}
  $\mu(\Gamma)$ embeds into a finitely presented group, which in turn
  embeds into $U$ (Theorem~\ref{thm:Higman:uni}).
\end{proof}

%%%%%%%%%%%%%

\section{A finitely presented non-amenable boundedly acyclic group}
\label{sec:finitely:pres}

The aim of this section is to make use of Theorem~\ref{thm:fg} in
order to construct a finitely presented boundedly acyclic group that
contains all finitely presented groups. In particular this provides
the first example of a finitely presented non-amenable boundedly
acyclic group.  The fundamental tool in the process is the following
result, which is based on the techniques by Baumslag, Dyer and
Miller~\cite[Section 4]{BDM}:

\begin{thm}
  \label{thm:fp}
  Let $\Gamma$ be a group such that $\mu(\Gamma)$ embeds into
  $\Gamma$. Then $\Gamma$ has a boundedly acyclic ascending
  HNN-extension.  In particular, a universal finitely presented group
  has a boundedly acyclic ascending HNN-extension.
\end{thm}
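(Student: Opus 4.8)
The plan is to realise the desired ascending HNN-extension of $\Gamma$ as an ascending HNN-extension of the boundedly acyclic group $\mu(\Gamma)$, so that bounded acyclicity comes for free from Corollary~\ref{cor:HNNbac}. Write $\iota \colon \Gamma \hookrightarrow \mu(\Gamma)$ for the canonical embedding of Theorem~\ref{thm:fg} and $\rho \colon \mu(\Gamma) \hookrightarrow \Gamma$ for the embedding provided by the hypothesis. Then $\iota\rho \colon \mu(\Gamma) \to \mu(\Gamma)$ and $\rho\iota \colon \Gamma \to \Gamma$ are monomorphisms, and I would set $G \coloneqq \mu(\Gamma) \ast_{\iota\rho}$, the ascending HNN-extension of $\mu(\Gamma)$ along $\iota\rho$ with stable letter $t$ (so $t^{-1} x t = \iota\rho(x)$ for $x \in \mu(\Gamma)$). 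Since $\mu(\Gamma) \in \bac$, Corollary~\ref{cor:HNNbac} gives $G \in \bac$ at once. It then remains to show that $G$ is \emph{simultaneously} an ascending HNN-extension of $\Gamma$, which proves the theorem.

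The key observation is that the stable letter already compresses $\mu(\Gamma)$ inside $\iota(\Gamma)$: indeed $t^{-1} \mu(\Gamma) t = \iota\rho(\mu(\Gamma)) \subseteq \iota(\Gamma)$, because $\rho(\mu(\Gamma)) \subseteq \Gamma$. Rearranging, $\mu(\Gamma) \subseteq t\,\iota(\Gamma)\,t^{-1}$, so $\langle \iota(\Gamma), t \rangle \supseteq \langle \mu(\Gamma), t\rangle = G$; as $\iota(\Gamma) \subseteq \mu(\Gamma)$ the reverse inclusion is clear, whence $G = \langle \iota(\Gamma), t\rangle$. Moreover $t^{-1}\iota(\Gamma) t = \iota\rho\iota(\Gamma) \subseteq \iota(\Gamma)$, and conjugation by $t$ restricts on $\iota(\Gamma) \cong \Gamma$ to the monomorphism $\rho\iota$. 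Thus the pair $(\iota(\Gamma), t)$ induces a natural surjection $\Gamma \ast_{\rho\iota} \twoheadrightarrow G$.

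The main point is to verify that this surjection is injective, i.e.\ that $G$ genuinely is the ascending HNN-extension $\Gamma \ast_{\rho\iota}$ with no collapse. I would do this through the mapping-torus description of ascending HNN-extensions: $\Gamma \ast_{\rho\iota} \cong L' \rtimes \Z$ and $G \cong L \rtimes \Z$, where $L' = \varinjlim(\Gamma \xrightarrow{\rho\iota} \Gamma \xrightarrow{\rho\iota} \cdots)$ and $L = \varinjlim(\mu(\Gamma) \xrightarrow{\iota\rho} \mu(\Gamma) \xrightarrow{\iota\rho} \cdots)$, with $\Z$ acting by the shift in each case. Both colimits arise as cofinal subsystems of the single interleaved system $\Gamma \xrightarrow{\iota} \mu(\Gamma) \xrightarrow{\rho} \Gamma \xrightarrow{\iota} \cdots$, so the map $L' \to L$ induced by $\iota$ is an isomorphism compatible with the shifts; hence the surjection $\Gamma \ast_{\rho\iota} \to G$ restricts to an isomorphism on the $\Z$-kernels and is the identity on the $\Z$-quotients, and is therefore an isomorphism. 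This identification of the two interleaved colimits is the only real obstacle; everything else is bookkeeping with the standard structure theory of ascending HNN-extensions. Consequently $\Gamma \ast_{\rho\iota} \cong G \in \bac$ is the desired boundedly acyclic ascending HNN-extension of $\Gamma$. For the final assertion, a universal finitely presented group $U$ satisfies $\mu(U) \hookrightarrow U$ by Corollary~\ref{cor:muG:embeds:into:U} (since $U$ is finitely presented), so the hypothesis applies and $U \ast_{\rho\iota}$ is a boundedly acyclic ascending HNN-extension.
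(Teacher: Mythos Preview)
Your proof is correct, but it takes a different route from the paper's. Both arguments ultimately concern the same group~$\Gamma *_{\rho\iota}$, but they certify its bounded acyclicity differently. The paper works directly inside~$\Gamma *_{\rho\iota}$: since $\varphi(\Gamma) = \rho\iota(\Gamma) \subseteq \rho(\mu(\Gamma)) \subseteq \Gamma$, one obtains the chain $\Gamma \leq t\,\rho(\mu(\Gamma))\,t^{-1} \leq t\Gamma t^{-1} \leq \Gamma *_{\rho\iota}$; as the base~$\Gamma$ is co-amenable in the ascending HNN-extension, the inheritance property of co-amenability (Example~\ref{ex:coamenable:subgroups}(4)) forces the intermediate conjugate of~$\mu(\Gamma)$ to be co-amenable as well, and Proposition~\ref{prop:coam} finishes. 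You instead prove the stronger structural fact that $\Gamma *_{\rho\iota} \cong \mu(\Gamma) *_{\iota\rho}$, by recognising both mapping tori as cofinal subsystems of the interleaved direct system $\Gamma \xrightarrow{\iota} \mu(\Gamma) \xrightarrow{\rho} \Gamma \to \cdots$, after which Corollary~\ref{cor:HNNbac} applies immediately. Your approach trades the co-amenability sandwich lemma for some colimit bookkeeping and yields more information (the two HNN-extensions are genuinely isomorphic, not merely both boundedly acyclic); the paper's argument is a couple of lines shorter and needs no identification of the limit groups.
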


\begin{proof}
  Let $\Gamma'$ be an isomorphic copy of $\Gamma$ with $\mu(\Gamma)
  \leq \Gamma'$.  Let $f \colon \Gamma' \to \Gamma$ be such an
  isomorphism.  Then there exists a monomorphism $\varphi \colon
  \Gamma' \to \Gamma \leq \Gamma'$ obtained by composing $f$ with the
  inclusion of $\Gamma$ into $\mu(\Gamma) \leq \Gamma'$
  (Theorem~\ref{thm:fg}).  We claim that the corresponding ascending
  HNN-extension~$\Gamma'*_\varphi$ is boundedly acyclic.  Indeed,
  since $\varphi(\Gamma') = t^{-1} \Gamma' t$ inside $\Gamma'
  \ast_\varphi$, we have the following chain of inclusions:
  $$
  \Gamma' = t \Gamma t^{-1} < t \mu(\Gamma) t^{-1} < t \Gamma' t^{-1} < \Gamma' \ast_\varphi.
  $$
  Moreover, $\Gamma'$ is co-amenable in $\Gamma'*_\varphi$
  (Example~\ref{ex:coamenable:subgroups}.2) and so by
  Example~\ref{ex:coamenable:subgroups}.4 we also know that $t
  \mu(\Gamma) t^{-1}$ is co-amenable in $\Gamma' *_\varphi$. By using
  the fact that $t \mu(\Gamma) t^{-1}$ is isomorphic to $\mu(\Gamma)$,
  whence boundedly acyclic, we conclude that $\Gamma' *_\varphi$ is
  boundedly acyclic as claimed (Proposition~\ref{prop:coam}).
  
  A universal finitely presented group (Theorem~\ref{thm:Higman:uni})
  satisfies the hypothesis of the theorem.  Indeed by
  Theorem~\ref{thm:fg} and Corollary~\ref{cor:muG:embeds:into:U}, we
  have embeddings $U < \mu(U) < U$.  Therefore $U$ admits a boundedly
  acyclic ascending HNN-extension.
\end{proof}

\begin{cor}
  \label{cor:fp}
  There exists a finitely presented boundedly acyclic group that
  contains an isomorphic copy of every finitely presented group.
\end{cor}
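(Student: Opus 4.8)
The plan is to feed a universal finitely presented group into Theorem~\ref{thm:fp} and then check, by inspecting the shape of the resulting presentation, that finite presentability survives the ascending HNN-extension.

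First I would let $U$ be a universal finitely presented group (Theorem~\ref{thm:Higman:uni}), so that $U$ is finitely presented and contains an isomorphic copy of every finitely presented group. The hypothesis of Theorem~\ref{thm:fp} is met by $U$: by Theorem~\ref{thm:fg} together with Corollary~\ref{cor:muG:embeds:into:U} we have embeddings $U < \mu(U) < U$, so $\mu(U)$ embeds into $U$. Theorem~\ref{thm:fp} then produces a boundedly acyclic ascending HNN-extension $G \coloneqq U' \ast_\varphi$, where $U' \cong U$ and $\varphi \colon U' \to U'$ is the monomorphism built in that proof. Since the base group of any HNN-extension embeds into it (Remark~\ref{rem:HNN:embedding}), the copy $U' \cong U$ sits inside $G$; as $U$ already contains every finitely presented group, so does $G$.

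The key remaining point is that $G$ is \emph{finitely presented}. Here I would use that $U'$ is finitely presented, say $U' = \langle S \mid R\rangle$ with $S$ and $R$ finite, and that the extension is ascending along the endomorphism $\varphi$. Then
$$
G = \bigl\langle\, S,\, t \ \bigm|\ R,\ t^{-1} s\, t = \varphi(s)\ (s \in S)\,\bigr\rangle,
$$
and because each $\varphi(s)$ is a word in $S$, the set of defining relations is finite. This is the standard observation that an ascending HNN-extension of a finitely presented group along a monomorphism is again finitely presented: one needs only that the base be finitely presented and finitely generated, both of which hold for $U'$. Combining the three ingredients, $G$ is a finitely presented boundedly acyclic group containing an isomorphic copy of every finitely presented group; in particular it is non-amenable, since it contains the finitely presented group~$F_2$.

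I expect the finite-presentation check to be the only genuine obstacle, and it is precisely what the construction is designed to handle. The subtlety is that a generic way of forcing bounded acyclicity (for instance directly using the non-finitely-generated mitotic groups $\mu(\Gamma)$ or $m^\infty(\Gamma)$) would destroy finite presentability; the point of Theorem~\ref{thm:fp} is that it arranges the stabilising monomorphism $\varphi$ to land \emph{inside} the finitely presented base $U'$ itself, so that only finitely many conjugation relations on a finite generating set are imposed. Once this is observed, no further estimates are required.
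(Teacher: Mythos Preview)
Your proof is correct and follows essentially the same route as the paper's: apply Theorem~\ref{thm:fp} to a universal finitely presented group~$U$, then observe that an ascending HNN-extension of a finitely presented group is finitely presented and contains the base~$U$. You spell out the explicit presentation of the HNN-extension and the non-amenability via~$F_2$, whereas the paper just asserts finite presentability in one clause, but the argument is the same.
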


\begin{proof}
  Let $U$ be a universal finitely presented group.  It admits a
  boundedly acyclic ascending HNN-extension, by the last statement of
  Theorem~\ref{thm:fp}.  This is finitely presented, being an
  ascending HNN-extension of a finitely presented group, and it
  contains $U$, thus all finitely presented groups.
\end{proof}

By Proposition~\ref{prop:coam}, if $\Lambda$ is an
ascending HNN-extension of $\Gamma$, then $\Gamma$ is co-amenable in
$\Lambda$ and so the embedding $\Gamma \to \Lambda$ induces an
injection in bounded cohomology.  The techniques from this section
allow us to show that, in general, this injection is very far from
being an isomorphism:

\begin{cor}
\label{cor:HNN:nonsurj}
  For all $d \geq 2$ there exists a finitely presented group $\Gamma$
  with $\HH^d_b(\Gamma; \R) \not\cong 0$ that admits a boundedly acyclic
  ascending HNN-extension.
\end{cor}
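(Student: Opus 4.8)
The plan is to exhibit, for each $d \geq 2$, a finitely presented group~$\Gamma$ that simultaneously satisfies the hypothesis $\mu(\Gamma) \hookrightarrow \Gamma$ of Theorem~\ref{thm:fp} (so that it admits a boundedly acyclic ascending HNN-extension) and has $\HH^d_b(\Gamma; \R) \not\cong 0$. The key idea is to decouple these two requirements through a direct product: one factor supplies the universal self-embedding, the other supplies a non-trivial bounded cohomology class.

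First I would fix the non-trivial class. By Example~\ref{exa:hyp} and Proposition~\ref{prop:loneb}.1, if $W = \pi_1(M)$ is the fundamental group of an oriented closed connected hyperbolic $d$-manifold~$M$, then
\[
  \dim_\R \HH^d_b(W; \R) \geq \loneb d(W) > 0.
\]
Such manifolds exist in every dimension $d \geq 2$, and $W$ is finitely presented. I then set $\Gamma \coloneqq U \times W$, where $U$ is a universal finitely presented group (Theorem~\ref{thm:Higman:uni}). As a direct product of two finitely presented groups, $\Gamma$ is again finitely presented.

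Next I would verify the nonvanishing by a retraction argument. The projection $p \colon \Gamma \to W$ and the inclusion $\iota \colon W \to \Gamma$, $w \mapsto (1, w)$, satisfy $p \circ \iota = \id_W$. By functoriality of bounded cohomology, $\HH^d_b(\iota) \circ \HH^d_b(p) = \id$ on $\HH^d_b(W; \R)$, so the map $\HH^d_b(p) \colon \HH^d_b(W; \R) \to \HH^d_b(\Gamma; \R)$ is injective. Since $\HH^d_b(W; \R) \not\cong 0$, this forces $\HH^d_b(\Gamma; \R) \not\cong 0$.

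It remains to establish the self-embedding, and this is where the delicate point lies. Since $\Gamma$ is finitely presented, Corollary~\ref{cor:muG:embeds:into:U} provides an embedding $\mu(\Gamma) \hookrightarrow U$; composing with $U \hookrightarrow U \times W = \Gamma$, $u \mapsto (u, 1)$, yields $\mu(\Gamma) \hookrightarrow \Gamma$. Hence $\Gamma$ meets the hypothesis of Theorem~\ref{thm:fp} and admits a boundedly acyclic ascending HNN-extension. The main obstacle is ensuring that passing to the product with the cohomological factor~$W$ does not destroy the universal self-embedding property of~$U$; this is exactly what Corollary~\ref{cor:muG:embeds:into:U} guarantees, since it applies to \emph{every} finitely presented group, in particular to $U \times W$. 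Note that this construction does not require knowing whether $\HH^d_b(U; \R)$ itself is non-trivial: the product cleanly separates the universality furnished by~$U$ from the non-vanishing furnished by~$W$.
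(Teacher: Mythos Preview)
Your proof is correct and follows essentially the same route as the paper: take $\Gamma = U \times W$ with $W$ a hyperbolic $d$-manifold group, use the retraction onto~$W$ for non-vanishing, and use that $\Gamma$ satisfies the hypothesis of Theorem~\ref{thm:fp}. The only cosmetic difference is that the paper phrases the last step by noting that $U \times W$ is itself a universal finitely presented group and then invokes the final clause of Theorem~\ref{thm:fp}, whereas you unpack this by directly exhibiting $\mu(\Gamma) \hookrightarrow U \hookrightarrow \Gamma$ via Corollary~\ref{cor:muG:embeds:into:U}.
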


\begin{proof}
  For every $d \geq 2$, let $\Lambda_d$ be the fundamental group of an
  oriented closed connected hyperbolic $d$-manifold.  Then, we know
  that $\HH^d_b(\Lambda_d; \R) \not\cong 0$ (Proposition~\ref{prop:loneb} and Example~\ref{exa:hyp})). 
  Since $\Lambda_d$ is
  finitely presented, we can set $\Gamma \coloneqq U \times
  \Lambda_d$, where $U$ is a universal finitely presented group.  As
  $\Gamma$ retracts onto~$\Lambda_d$, we have $\HH^d_b(\Gamma; \R)
  \not\cong 0$.  On the other hand, $\Gamma$ is itself a universal
  finitely presented group, so by the last statement of
  Theorem~\ref{thm:fp} it has a boundedly acyclic ascending
  HNN-extension.
\end{proof}

%\begin{rem}
%  The following question~\cite[Question 1.8]{Loeh} remains open: What
%  can be said about the bounded cohomology of the wreath product $F
%  \wr \mathbb{Z}$, where $F$ is a non-abelian free group?  On the one
%  hand, a direct sum of infinitely many copies of $F$ has
%  non-vanishing bounded cohomology in all even
%  degrees~\cite[Proposition 1.5]{Loeh}, so one may expect something
%  similar to hold for the ascending HNN-extension $F \wr \mathbb{Z}$.
%  On the other hand, Corollary~\ref{cor:HNN:nonsurj} shows that this is not
%  necessarily the case: A priori $F \wr \mathbb{Z}$ could even be
%  boundedly acyclic.
%\end{rem}

%%%%%%%

\section{Finitely generated groups with large bounded cohomology}
\label{sec:large}

In this section, we turn to groups with large bounded cohomology,
namely whose bounded cohomology is at least continuum-dimensional in
every degree at least~$2$
(Definition~\ref{defi:groups:with:large:bounded}). Countable
examples have been constructed before~\cite{Loeh}, but no
finitely generated example was known. Here, we provide the first
recipe for finitely generated examples:

\begin{thm}
  \label{thm:large}
  There exists a $6$-generated group with large bounded
  cohomology. Moreover, this group can be chosen to be torsion-free.
\end{thm}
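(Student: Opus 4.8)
The plan is to build a single finitely generated group that is large in all \emph{even} degrees by exploiting self-similarity, and then to repair the odd degrees by a direct product with a group carrying a genuine top-dimensional class in degree~$3$. Throughout, the engine is Proposition~\ref{prop:loneb}: part~(3) makes $\loneb{\bullet}$ supermultiplicative under products, while parts~(1) and~(2) let me pass between $\loneb 2$, $\loneb{2d}$ and $\dim_\R \HH^\bullet_b$.

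First I would isolate the self-similarity criterion (the content of Theorem~\ref{thm:large2}). Suppose $\Gamma \cong \Gamma \times \Gamma$; then by induction $\Gamma \cong \Gamma^d$ for every $d \geq 1$. Iterating the product bound of Proposition~\ref{prop:loneb}.(3) gives
\[
  \loneb{2d}(\Gamma) = \loneb{2d}(\Gamma^d) \geq \loneb{2}(\Gamma)^d .
\]
If moreover $\dim_\R \HH^2_b(\Gamma;\R) \geq |\R|$, then Proposition~\ref{prop:loneb}.(2) yields $\loneb{2}(\Gamma) \geq |\R|$, and since $|\R|^d = |\R|$ as cardinals we obtain $\loneb{2d}(\Gamma) \geq |\R|$ for all $d \geq 1$; Proposition~\ref{prop:loneb}.(1) then upgrades this to $\dim_\R \HH^{2d}_b(\Gamma;\R) \geq |\R|$. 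Thus a self-square group with infinite-dimensional second bounded cohomology is automatically large in every even degree. Next I would verify the hypothesis for Meier's group $\Gamma$, which is finitely generated, torsion-free, and satisfies $\Gamma \cong \Gamma \times \Gamma$. The only point to check is that $\HH^2_b(\Gamma;\R)$ is infinite-dimensional, and for this I would exhibit an epimorphism from $\Gamma$ onto a nonabelian free group $F$: since $\dim_\R \HH^2_b(F;\R) \geq |\R|$ (Brooks' quasimorphisms) and epimorphisms induce injections in degree-$2$ bounded cohomology (the result of Bouarich recalled in Section~\ref{subs:quot}), this forces $\dim_\R \HH^2_b(\Gamma;\R) \geq |\R|$. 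Applying the criterion makes $\Gamma$ large in all even degrees.

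To reach the odd degrees I would pass to $G \coloneqq \Gamma \times \Lambda$, where $\Lambda$ is the fundamental group of an oriented closed connected hyperbolic $3$-manifold. The even degrees survive because $\Gamma$ is a retract of $G$, so $\HH^{2d}_b(\Gamma;\R)$ injects into $\HH^{2d}_b(G;\R)$. For the odd degrees I would use the bookkeeping of Remark~\ref{rem:loneb}: Proposition~\ref{prop:loneb}.(3) gives $\loneb{2d+3}(G) \geq \loneb{2d}(\Gamma) \cdot \loneb{3}(\Lambda)$, and combining $\loneb{3}(\Lambda) > 0$ (Example~\ref{exa:hyp}) with $\loneb{2d}(\Gamma) \geq |\R|$ settles every odd degree $\geq 5$. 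Finally, Meier's group and a suitably chosen $\Lambda$ are both finitely generated (the latter by at most three elements for an appropriate manifold), so tracking the two generating sets bounds $G$ by $6$ generators; and $G$ is torsion-free, being a product of torsion-free groups.

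The hard part will be degree~$3$, which the product bound only controls through $\loneb{3}(G) \geq \loneb{3}(\Lambda)$ — and Example~\ref{exa:hyp} guarantees this to be merely \emph{positive}. To make degree~$3$ large I need the stronger input that $\loneb{3}(\Lambda) \geq |\R|$, i.e.\ that the top-dimensional reduced $\ell^1$-homology of a closed hyperbolic $3$-manifold is infinite-dimensional; establishing or citing this is the crux, and it is precisely what dictates the use of a genuinely $3$-dimensional hyperbolic seed (a surface, living only in degree~$2$, cannot feed degree~$3$, and no splitting $3 = 2 + 1$ helps since $\loneb{1}$ vanishes). The second delicate point is the construction of a nonabelian free quotient of Meier's group, on which the entire even-degree conclusion rests.
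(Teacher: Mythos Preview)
Your overall architecture matches the paper's, but there are two genuine gaps, one of which is fatal.

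\textbf{The free quotient of Meier's group does not exist.} You propose to get large $\HH^2_b(\Gamma;\R)$ by exhibiting an epimorphism from Meier's group~$\Gamma$ onto a nonabelian free group. This cannot work: since $\Gamma \cong \Gamma \times \Gamma$, abelianising gives $\Gamma^{ab} \cong \Gamma^{ab} \times \Gamma^{ab}$, and a finitely generated abelian group isomorphic to its own square is trivial. Hence $\Gamma$ is perfect and has no nontrivial free quotient. The paper avoids this by surjecting $\Gamma$ onto the building block~$T$ (the projection $T^{\N} \to T$ restricted to~$\Gamma$), and then showing that $T$ itself has large second bounded cohomology via its amalgamated free product structure $T = B *_F \overline{B}$: one verifies $|F\backslash B/F| = \infty$ (Lemma~\ref{lem:dc}) and applies the Grigorchuk--Fujiwara criterion of Example~\ref{ex:large}.1. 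This is exactly the ``second delicate point'' you flag, but your proposed resolution is in the wrong direction.

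\textbf{Degree~$3$ is handled by a retraction, not by~$\loneb 3$.} You correctly identify that your product estimate only gives $\loneb 3(G) \geq \loneb 3(\Lambda) > 0$, which is not large. But you then try to upgrade $\loneb 3(\Lambda)$ to continuum size, which is not what the paper does (and may well be open). Instead, the paper uses the retraction $G = \Gamma \times \Lambda \to \Lambda$ together with the known fact (Soma; or the acylindrically hyperbolic machinery of Example~\ref{ex:large}.4) that $\HH^3_b(\Lambda;\R)$ is already large. You used exactly this retraction trick for the even degrees via the factor~$\Gamma$; you just need to apply it once more via the factor~$\Lambda$.

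A minor bookkeeping point: Meier's group is $4$-generated, so to reach six generators total you need $\Lambda$ to be $2$-generated; the paper obtains this by a Dehn filling on the figure-eight knot complement.
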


The proof of Theorem~\ref{thm:large} will be completed in
Section~\ref{subsec:prooflarge}.

Combining this theorem with Theorem~\ref{thm:HNN}~\cite{HNN}, we
obtain:

\begin{cor}
  \label{cor:large:cont}
  There exist continuum many non-isomorphic $8$-generated groups
  with large bounded cohomology.
\end{cor}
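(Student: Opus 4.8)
The plan is to glue the single torsion-free witness from Theorem~\ref{thm:large} to the continuum of torsion-distinguished groups from Theorem~\ref{thm:HNN} by means of a direct product, and then to read off large bounded cohomology from a retraction and non-isomorphism from torsion. Concretely, I would first fix once and for all a torsion-free $6$-generated group~$G$ with large bounded cohomology, as provided by the ``moreover'' clause of Theorem~\ref{thm:large}. For each set~$P$ of prime numbers, I would apply Theorem~\ref{thm:HNN} to $\bigoplus_{p \in P} \Z/p\Z$ to obtain a $2$-generator group~$K_P$ that contains $p$-torsion if and only if $p \in P$, exactly as in the proof of Corollary~\ref{cor:HNN:cont}. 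The proposed family is then~$\{G \times K_P\}_P$, indexed by all sets of primes.

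The generation count is immediate: since $G$ is $6$-generated and $K_P$ is $2$-generated, the product $G \times K_P$ is generated by the images of the two generating sets under the canonical inclusions of the factors, hence is $8$-generated. Next I would check that large bounded cohomology is inherited by the product. The projection $\pi \colon G \times K_P \to G$ and the inclusion $\iota \colon G \to G \times K_P$ of the first factor satisfy $\pi \circ \iota = \id_G$, so functoriality of bounded cohomology gives $\HH^\bullet_b(\iota) \circ \HH^\bullet_b(\pi) = \id$ on $\HH^\bullet_b(G;\R)$. In particular $\HH^n_b(\pi)$ is injective, whence
\[
\dim_\R \HH^n_b(G \times K_P;\R) \geq \dim_\R \HH^n_b(G;\R) \geq |\R|
\]
for all $n \geq 2$; this is the same retraction argument already used in Corollary~\ref{cor:HNN:nonsurj}. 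Thus every member of the family has large bounded cohomology, irrespective of the factor~$K_P$.

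The remaining and most delicate point is to ensure that the family contains continuum many \emph{pairwise non-isomorphic} groups, and here I would use torsion as the distinguishing invariant. Because $G$ is torsion-free, the product $G \times K_P$ contains $p$-torsion if and only if $K_P$ does, that is, if and only if $p \in P$. Hence distinct sets $P$ produce groups with distinct torsion spectra, so the groups $G \times K_P$ are pairwise non-isomorphic; since there are continuum many sets~$P$ of primes, this yields continuum many examples. The step that genuinely requires care is precisely this last one: it is essential to invoke the torsion-free version of Theorem~\ref{thm:large}, so that all torsion in the product is localized in the $K_P$ factor and the argument of Corollary~\ref{cor:HNN:cont} transfers verbatim. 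Without torsion-freeness of~$G$ one would instead have to fall back on a counting invariant, as in Remark~\ref{rem:continuum:tf:bac}, to separate isomorphism types.
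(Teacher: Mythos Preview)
Your proof is correct and follows essentially the same approach as the paper: take the torsion-free $6$-generated group from Theorem~\ref{thm:large}, cross it with the continuum of $2$-generated torsion-distinguished groups from Corollary~\ref{cor:HNN:cont}, and use the retraction onto the first factor (as in Example~\ref{ex:large}.3) together with torsion-freeness to conclude. The only cosmetic difference is that the paper cites Corollary~\ref{cor:HNN:cont} directly rather than re-deriving the family~$K_P$ from Theorem~\ref{thm:HNN}.
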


\begin{proof}
  Let $\Gamma$ be a $6$-generated torsion-free group with large
  bounded cohomology (Theorem~\ref{thm:large}).  Then, for every group
  $\Lambda$ the product $\Lambda \times \Gamma$ also has large bounded
  cohomology, because the product retracts onto~$\Gamma$.  Now by
  Corollary~\ref{cor:HNN:cont} there exist continuum many
  $2$-generated groups pairwise distinguished by their torsion.
  Taking $\Lambda$ to be in this family, since $\Gamma$ is
  torsion-free, we obtain continuum many non-isomorphic $8$-generated
  groups with large bounded cohomology.
\end{proof}

\begin{rem}
Again, we can also choose these groups to be torsion-free using Remark~\ref{rem:continuum:tf:bac}.
\end{rem}

\subsection{Constructing groups with large bounded cohomology}

We provide a criterion to construct finitely
generated groups with large bounded cohomology, by starting with a
group that is isomorphic to a proper direct factor of itself. We
begin by introducing a local version of
Definition~\ref{defi:groups:with:large:bounded}:

\begin{defi}
  \label{defi:groups:with:large:bounded2}
  Let $n \geq 2$. A group $\Gamma$ has \emph{large $n$-th
    bounded cohomology} if $\dim_\R \HH^n_b(\Gamma; \R) \geq |\R|$.
\end{defi}

\begin{rem}
  Let $\Gamma$ be a group. 
  The following inequality always holds:
  $$\dim_\R \HH^n_b(\Gamma; \R) \leq |\CC^n_b(\Gamma; \R)| \leq |\R|^{\Gamma^{n+1}}$$
  In particular, if $\Gamma$ is countable, then $\dim_\R
  \HH^n_b(\Gamma; \R) \leq |\R|$. This shows that countable groups
  with large $n$-th bounded cohomology have $n$-th bounded cohomology
  of dimension equal to $|\R|$. For arbitrary groups, larger
  cardinalities are possible~\cite{Faiziev}.
\end{rem}

\begin{example}
  \label{ex:large}
  The following examples will be useful in the rest of this
  section:
  \begin{enumerate}
  \item Let $G *_C H$ be an amalgamated product with $|C \backslash G
    / C| \geq 3$ and $C \neq H$. Then $G *_C H$ has large second
    bounded cohomology~\cite{Grigorchuk:free, Fujiwara:free}.
  \item If $\Delta$ has large second bounded cohomology and $\Gamma$
    surjects onto $\Delta$, then $\Gamma$ has large second bounded
    cohomology (Section~\ref{subs:quot}).
  \item If $H$ is a retract of $\Gamma$,
  and $H$ has large $n$-th bounded cohomology,
  then so does $\Gamma$, since the epimorphism
  $\Gamma \to H$ induces an injection in bounded cohomology.
  A special case of this is when $H$ is a direct factor of $\Lambda$,
  which was used in the proof of Corollary~\ref{cor:large:cont}.
  \item Acylindrically hyperbolic groups have large
    second~\cite{Hull_Osin} and third~\cite{FPS} bounded cohomology.
    We will only need the case of
    fundamental groups of oriented closed connected hyperbolic $3$-manifolds
    for the proof of
    Theorem~\ref{thm:large}. This case was known earlier~\cite{Brooks, Epstein_Fujiwara,
      Soma}.
\end{enumerate}
A further example is contained in Lemma~\ref{lem:largeamalg}.
\end{example}

The main tool in the proof of Theorem~\ref{thm:large} is the
following:

\begin{thm}
  \label{thm:large2}
  Let $\Gamma$ and $\Sigma$ be groups such that $\Gamma \cong \Gamma \times
  \Sigma$. Suppose that $\Sigma$ has large second bounded cohomology. Then
  $\Gamma$ has large bounded cohomology in all even degrees.

  Moreover, if $\Lambda$ is the fundamental group of an oriented closed connected hyperbolic
  $3$-manifold, then
  $\Gamma \times \Lambda$ has large bounded cohomology.
\end{thm}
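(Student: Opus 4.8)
The plan is to feed the cross-product estimates recorded in Proposition~\ref{prop:loneb} and Remark~\ref{rem:loneb} with the factors of $S$ (respectively $\Lambda$) that the self-similarity of $\Gamma$ supplies. First I would extract the key consequence of the hypothesis $\Gamma \cong \Gamma \times S$: iterating this isomorphism gives $\Gamma \cong \Gamma \times S^d$ for every $d \geq 1$. Next, since $S$ has large second bounded cohomology, Proposition~\ref{prop:loneb}.(2) turns this into a statement about reduced $\ell^1$-homology, namely $\loneb 2(S) \geq \dim_\R \HH^2_b(S;\R) \geq |\R|$.

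For the first assertion I would apply the first bullet of Remark~\ref{rem:loneb} with $\Lambda = \Gamma$ and $\Lambda_2 = S$, using $\Gamma \cong \Gamma \times S^d$:
\[
\dim_\R \HH_b^{2d}(\Gamma;\R) \geq \loneb{2d}(\Gamma) \geq \loneb 2(S)^d \geq |\R|^d = |\R|,
\]
where the final equality is the cardinal identity $(2^{\aleph_0})^d = 2^{\aleph_0}$ valid for every finite $d \geq 1$. Letting $d$ range over the positive integers covers all even degrees $\geq 2$, which is exactly the claim that $\Gamma$ has large bounded cohomology in all even degrees.

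For the moreover part I must additionally produce large bounded cohomology in the odd degrees of $\Gamma \times \Lambda$, and I would split into three cases. Even degrees $\geq 2$ are immediate: $\Gamma$ is a retract of $\Gamma \times \Lambda$, and $\Gamma$ already has large bounded cohomology in all even degrees by the first part, so Example~\ref{ex:large}.(3) transfers this. Degree $3$ is handled by the other retract: $\Lambda$ has large third bounded cohomology (Example~\ref{ex:large}.(4)), and being a retract of $\Gamma \times \Lambda$ this transfers again by Example~\ref{ex:large}.(3). For the remaining odd degrees $2d+3$ with $d \geq 1$ I would invoke the second bullet of Remark~\ref{rem:loneb} with $\Lambda_3 = \Lambda$ and $\Gamma_d = \Gamma \cong \Gamma \times S^d$:
\[
\dim_\R \HH_b^{2d+3}(\Gamma \times \Lambda;\R) \geq \loneb{2d}(\Gamma) \cdot \loneb 3(\Lambda) \geq |\R| \cdot \loneb 3(\Lambda),
\]
which is $\geq |\R|$ because $\loneb 3(\Lambda) > 0$ by Example~\ref{exa:hyp} (the simplicial volume of a closed hyperbolic $3$-manifold is positive). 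Together the three cases exhaust all degrees $\geq 2$.

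I do not expect a genuine obstacle once Remark~\ref{rem:loneb} is available; the only substantive idea is the iteration $\Gamma \cong \Gamma \times S^d$, which converts a single self-factorisation into arbitrarily many tensor factors and thereby pushes the degree of non-vanishing upward. The remaining work is the mild bookkeeping needed to reconcile the retract argument in degree~$3$ with the product estimate in degrees~$\geq 5$, together with the elementary cardinal arithmetic ensuring that finite powers of $|\R|$ do not escape $|\R|$.
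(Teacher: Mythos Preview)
Your proposal is correct and follows essentially the same approach as the paper: iterate $\Gamma \cong \Gamma \times S^d$, feed this into the cross-product estimates of Remark~\ref{rem:loneb}, and handle degree~$3$ via the retraction onto~$\Lambda$. The only cosmetic difference is that for the even degrees of~$\Gamma \times \Lambda$ the paper invokes Remark~\ref{rem:loneb} directly (the clause ``the above still holds for $\Gamma_d \times \Lambda_3$'') whereas you use the retraction onto~$\Gamma$; both are immediate.
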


\begin{proof}
  For every $d \geq 1$ we can write $\Gamma$ as $\Gamma \times
  \Sigma^d$. Now it follows at once from Remark~\ref{rem:loneb} that
  $\Gamma$ has large bounded cohomology in all even degrees.  Next,
  since $\loneb 3(\Lambda) > 0$ (Example~\ref{exa:hyp}), again it
  follows from Remark~\ref{rem:loneb} that $\Gamma \times \Lambda$ has
  large bounded cohomology in all even degrees, and all degrees of the
  form $2d + 3$ for $d \geq 1$, which includes all integers $n \geq
  2$, except for $n = 3$.  Finally, $\Gamma
  \times \Lambda$ retracts onto $\Lambda$, and so has large third bounded cohomology by
  Examples~\ref{ex:large}.3 and ~\ref{ex:large}.4.
\end{proof}

\begin{rem}
\label{rem:large3}

  The last statement of Theorem~\ref{thm:large2} also holds if we only
  assume that $\HH^2_b(\Sigma; \R) \not \cong 0$.  Indeed, it is still true
  that $\Gamma \times \Lambda$ has large second and third
  bounded cohomology, and that $\loneb 2(\Gamma \times \Lambda) \geq |\R|$.  We can write all greater even integers as $2d + 2$ (Remark~\ref{rem:loneb}),
  thus obtaining largeness in all even degrees.  For the larger odd
  degrees we proceed as in the proof of Theorem~\ref{thm:large2}.
\end{rem}

\begin{rem}
  Notice that we might replace~$\Lambda$ in the product above
  by an arbitrary acylindrically hyperbolic group~$A$,
  if we knew
  that $\loneb 3(A) > 0$. This, however, seems to be an open problem,
  even for non-abelian free groups.
\end{rem}

The remaing part of this section is devoted to the construction of a
finitely generated group $\Gamma$ satisfying the assumptions of
Theorem~\ref{thm:large2}.

\subsection{Meier's finitely generated group}

Finitely generated groups $\Gamma$ with the property that $\Gamma
\cong \Gamma \times \Sigma$ for some $\Sigma \not\cong 1$ were first constructed by
Jones~\cite{Jones}.  Further constructions were given by
Meier~\cite{Meier}, Rhemtulla~\cite{Rhemtulla} and
Hirshon~\cite{Hirshon}. The last paper provides examples with $\Sigma$
finitely presented.  Among the constructions, particular attention has
been devoted to the case in which $\Gamma = \Sigma$.  We will show that
Meier's group satisfies the assumption of Theorem~\ref{thm:large2}.

\begin{defi}[Meier's finitely generated group]\label{defi:Meier}
  Let $$B \coloneqq \langle a, t \mid t^{-1}a^2t = a^3 \rangle$$ be
  the Baumslag--Solitar group $\BS(2, 3)$ and let $\overline{B}$ be
  another copy of $B$ with generators $\overline{a}, \overline{t}$.
  Let $$L \coloneqq \langle t, [a, t^{-1}at] \rangle \leq B,$$
  which is
  a free subgroup of rank $2$, and let $\overline{L}$ be its copy
  inside $\overline{B}$.  We then set $\Delta$ to be the free product of
  $B$ and $\overline{B}$ amalgamated over $L \cong \overline{L}$ by
  switching the generators; namely
  $$\Delta \coloneqq B \ast_{L \cong \overline{L}} \overline{B} = \langle B, \overline{B} \mid t = [\overline{a}, \overline{t}^{-1}\overline{a}\overline{t}], \overline{t} = [a, t^{-1}at] \rangle.$$
  Notice that $\Delta$ is torsion-free, being an amalgamated product of two
  torsion-free groups, and it is generated by $a, \overline{a}$ and
  $t$.  Let $\Gamma$ be the subgroup of $\Delta^{\mathbb{N}}$ generated by
  the diagonal elements $(a, a, \ldots), (\overline{a}, \overline{a}, \ldots),
  (t, t, \ldots)$ together with the element $(1, a, a^2, \ldots)$.
  Thus $\Gamma$ is a four-generated subgroup of $\Delta^{\mathbb{N}}$
  containing the diagonal.
  We call $\Gamma$ \emph{Meier's finitely generated group}.
\end{defi}

\begin{thm}[{\cite[Proposition 7]{Meier}}]
Meier's finitely generated group $\Gamma$ satisfies $\Gamma \cong \Gamma \times \Gamma$.
\end{thm}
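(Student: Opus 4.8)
The plan is to treat this as a purely algebraic self-similarity statement and to produce the isomorphism from a decomposition of the index set. Fix a partition $\mathbb{N} = E \sqcup O$ into two infinite subsets (for concreteness, the even and the odd integers); reindexing each block by $\mathbb{N}$ gives an isomorphism of the ambient group $T^{\mathbb{N}} \cong T^{\mathbb{N}} \times T^{\mathbb{N}}$. First I would compute the images of the four generators of $\Gamma$ under this isomorphism. Writing $\delta(g) = (g, g, \ldots)$ for the diagonal and $b = (1, a, a^2, \ldots)$ for the distinguished generator, the three diagonal generators $\delta(a), \delta(\overline{a}), \delta(t)$ map to the pair of corresponding diagonal elements in the two factors, while a direct computation gives $b \mapsto (b^2,\ \delta(a)\, b^2)$, where $b$ now denotes the analogous element in each factor. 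In particular the image of $\Gamma$ lands in $\Gamma \times \Gamma$, so the ambient isomorphism restricts to an injective homomorphism $\Phi \colon \Gamma \hookrightarrow \Gamma \times \Gamma$; the whole content of the proposition is the \emph{surjectivity} of $\Phi$.

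To prove surjectivity I would argue that the generating set of each factor lies in the image. The key local tool is the Baumslag--Solitar relation $t^{-1} a^2 t = a^3$ of $B$, which holds coordinatewise and yields the identity $\delta(t)^{-1} b^2\, \delta(t) = b^3$ inside $\Gamma$; hence $b = b^3 (b^2)^{-1}$ is recovered from $b^2$ and $\delta(t)$, so that each coordinate projection of $\Phi(\Gamma)$ surjects onto the corresponding copy of $\Gamma$. It remains to decouple the two factors, i.e.\ to show that the diagonally entangled image is the full product. By Goursat's lemma this is equivalent to triviality of the common quotient $Q$ through which both projections of $\Phi(\Gamma)$ factor: the entanglement of the generators identifies $Q$ with a quotient of $\Gamma$ carrying an automorphism that fixes the diagonal $\delta(T)$ and sends the class of $b^2$ to that of $\delta(a)\, b^2$. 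Feeding this compatibility into the coordinatewise relation $\delta(t)^{-1} b^2 \delta(t) = b^3$ should force the class of $b$ to be trivial in $Q$, whereupon $Q$ collapses; consequently both $\Gamma \times 1$ and $1 \times \Gamma$ lie in the image, and $\Phi$ is onto.

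The main obstacle is exactly this decoupling step: a priori the block-splitting map lands only in a subgroup that is diagonally glued across the two factors, and one must exploit the precise design of $T$ to separate them. This is where $T$ earns its definition. The Baumslag--Solitar relation lets a squared power-element be promoted back to the element itself within a single block, which is what makes the power $b^2$ appearing in $\Phi(b)$ harmless; and the commutator amalgamation defining $T$, which switches $t$ with $[\overline{a}, \overline{t}^{-1}\overline{a}\,\overline{t}]$ and $\overline{t}$ with $[a, t^{-1}at]$, is arranged precisely so that no naive retraction $T \to B$ can kill $\overline{B}$ (substituting $\overline{a} \mapsto 1$ would send $t = [\overline{a}, \overline{t}^{-1}\overline{a}\,\overline{t}]$ to $1$, a contradiction), keeping the generating set and the relevant normal subgroups consistent under the reindexing. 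Once surjectivity is established, $\Phi$ is the desired isomorphism $\Gamma \cong \Gamma \times \Gamma$, and I would finally note that it respects the torsion-free, finitely generated structure already recorded in Definition~\ref{defi:Meier}.
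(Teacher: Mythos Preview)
The paper does not prove this statement; it simply quotes Meier's result and cites his paper. So there is no proof here to compare against, and your attempt has to stand on its own.

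Your set-up is fine: the even/odd splitting of~$\mathbb{N}$ gives an injection~$\Phi\colon \Gamma \hookrightarrow \Gamma\times\Gamma$, your computation~$\Phi(b)=(b^2,\delta(a)\,b^2)$ is correct, and the Baumslag--Solitar relation $\delta(t)^{-1}b^2\delta(t)=b^3$ does show that each coordinate projection of~$\Phi(\Gamma)$ is onto~$\Gamma$. The Goursat reduction to a common quotient~$Q$ is also legitimate.

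The gap is exactly where you flag it yourself. You assert that ``feeding this compatibility into $\delta(t)^{-1} b^2 \delta(t) = b^3$ should force the class of $b$ to be trivial in $Q$'', but you never carry this out, and if you do, it does not obviously work. Writing~$\alpha$ for the Goursat isomorphism (fixing~$\delta(T)$ and sending $b^2\mapsto\delta(a)b^2$), applying~$\alpha$ to $\delta(t)^{-1}b^2\delta(t)=b^3$ gives
\[
\alpha(b)=\delta(t^{-1}at\cdot a^{-1})\,b
\]
in~$Q$. Squaring and comparing with $\alpha(b)^2=\delta(a)b^2$ does \emph{not} kill~$b$; since $b$ does not commute with~$\delta(t^{-1}at\,a^{-1})$, what you get instead is a conjugation relation $b\,\delta(t^{-1}at\,a^{-1})\,b^{-1}=\delta(a\,t^{-1}a^{-1}t\,a)$ in~$Q$, and there is no evident reason this cannot hold in a nontrivial quotient of~$\Gamma$. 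Your closing paragraph about the impossibility of a retraction $T\to B$ is correct as a remark about~$T$, but it does not speak to this decoupling problem at all. As written, surjectivity of~$\Phi$ is asserted rather than proved, so the argument is incomplete.
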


\begin{rem}\label{rem:H2:T:implies:H2:Gamma}
  The projection $\Delta^{\N} \to \Delta$ onto the first coordinate restricts to
  an epimorphism $\psi \colon \Gamma \to \Delta$, because $\Gamma$ contains
  the diagonal of $\Delta^{\N}$.  Hence, by Example~\ref{ex:large}.2, if $\Delta$
  has large second bounded cohomology then so does~$\Gamma$.  Thus,
  in this case, $\Gamma$ satisfies the hypotheses of Theorem~\ref{thm:large2}.
\end{rem}

The previous remark shows that we reduced the problem of computing
the second bounded cohomology of Meier's finitely generated group~$\Gamma$ to
the one of computing $\HH^2_b(\Delta; \R)$.  Recall from
Example~\ref{ex:large}.1 that an amalgamented free product $G *_C H$ has
large second bounded cohomology provided $|C \backslash G / C| \geq 3$
and $C \neq H$. We will spend the last part of this section by proving
that $T$ satisfies the previous condition. 

Before proving that $|L \backslash B / L|$ has the desired
cardinality, it is useful to recall the proof that $B$ is
non-Hopfian~\cite{BS}: This amounts in constructing a non-injective
self-epimorphism $\varphi$.  The desired $\varphi \colon B \to B$ is
defined on the standard generators of $B$ by $\varphi(a) = a^2,
\varphi(t) = t$. Since the image of $\varphi$ contains both the
generators $a = \varphi([t, a^{-1}])$ and $t = \varphi(t)$, the
homomorphism~$\varphi$
is surjective.  On the other hand, $\varphi$ is non-injective because
$\varphi([a, t^{-1}at]) = 1$, and this element is easily seen to be
non-trivial using Britton's Lemma~\cite[Theorem 11.81]{rotman}.

We are now ready to show that the cardinality $|L \backslash B / L|$
is infinite, by using the previous homomorphism $\varphi$.

\begin{lemma}
  \label{lem:dc}
  Let $\varphi \colon B \to B$ be the non-injective self-epimorphism
  defined above.  Then, there exists a strictly nested sequence $$L <
  \varphi^{-1}(L) < \varphi^{-2}(L) < \cdots .$$ In particular $|L
  \backslash B / L| = \infty$.
\end{lemma}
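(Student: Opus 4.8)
The plan is to study the ascending chain of preimages $H_n := \varphi^{-n}(F)$ and to reduce both assertions to the single fact that the first inclusion $F = H_0 \subseteq H_1$ is strict. First I would record that $\varphi$ maps $F$ into itself: since $\varphi(t) = t$ and $\varphi([a, t^{-1}at]) = [a^2, t^{-1}a^2 t] = [a^2, a^3] = 1$ (using the defining relation $t^{-1}a^2 t = a^3$, and that powers of $a$ commute), we obtain $\varphi(F) = \langle t \rangle \subseteq F$. Consequently $F \subseteq \varphi^{-1}(F)$, and applying the monotone operation $\varphi^{-1}$ repeatedly yields $H_0 \subseteq H_1 \subseteq H_2 \subseteq \cdots$. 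Moreover $\varphi^n(F) \subseteq F$ for every $n$, so each $H_n$ is invariant under left and right multiplication by $F$; that is, each $H_n$ is a union of $(F,F)$-double cosets.

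Next I would prove that the chain is strictly increasing. For the base step $F \subsetneq H_1$ I argue by contradiction: if $\varphi^{-1}(F) = F$, then surjectivity of $\varphi$ forces $F \subseteq \varphi(F)$, because every $f \in F$ has a preimage $b$ with $\varphi(b) = f \in F$, whence $b \in \varphi^{-1}(F) = F$ and $f = \varphi(b) \in \varphi(F)$. Combined with $\varphi(F) = \langle t \rangle$, this gives $F \subseteq \langle t \rangle$, contradicting that $F$ is free of rank $2$ on $t$ and $[a,t^{-1}at]$. To propagate strictness to all levels I would again use surjectivity, now of $\varphi^n$: given $y \in H_1 \setminus H_0$, choose $x$ with $\varphi^n(x) = y$; then $\varphi^{n+1}(x) = \varphi(y) \in F$ while $\varphi^n(x) = y \notin F$, so $x \in H_{n+1} \setminus H_n$. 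Hence $H_n \subsetneq H_{n+1}$ for all $n$, which is precisely the strictly nested sequence claimed.

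Finally, for $|F \backslash B / F| = \infty$ I would exploit that each $H_n$ is a union of double cosets together with the strictness just established. Assigning to a double coset $FbF$ the level $\ell(FbF) := \min \{\, n : \varphi^n(b) \in F \,\}$ is well defined, since $\varphi^n(f_1 b f_2) = \varphi^n(f_1)\,\varphi^n(b)\,\varphi^n(f_2)$ lies in $F$ if and only if $\varphi^n(b)$ does, using $\varphi^n(F) \subseteq F$. By strictness there is an element of $H_n \setminus H_{n-1}$, hence of level exactly $n$, for every $n \geq 1$; elements of distinct levels lie in distinct double cosets, so there are infinitely many.

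The main obstacle is the strictness of the nesting, and the key simplification is the observation that surjectivity of $\varphi$ collapses the whole chain to its first step: once $F \subsetneq \varphi^{-1}(F)$ is known — which follows cleanly from $\varphi(F) = \langle t \rangle \subsetneq F$ plus surjectivity — strictness at every level and the infinitude of double cosets come almost for free via the level function.
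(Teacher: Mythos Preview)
Your proof is correct and follows essentially the same route as the paper: compute $\varphi(F) = \langle t \rangle$ to get $F \subsetneq \varphi^{-1}(F)$, propagate the strict inclusion along the chain, and then count double cosets via the strict nesting. Your explicit use of surjectivity to push strictness from level~$0$ to all levels, and your level function for the double-coset count, are slightly more detailed packagings of what the paper does with the phrases ``applying $\varphi^{-1}$ repeatedly'' and ``repeat this argument inductively,'' but the underlying argument is the same.
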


\begin{proof}
  Since $[a, t^{-1}at] \in \ker(\varphi)$, we know that
  $$\varphi(L) = \langle \varphi(t), \varphi([a, t^{-1}at]) \rangle = \langle t \rangle.$$
  In particular $L$ is strictly contained in the group
  $\varphi^{-1}(L)$. Applying $\varphi^{-1}$ repeatedly to both sides
  we obtain the desired sequence.

  Now let $x \in \varphi^{-1}(L) \, \backslash \, L$. Then the double
  cosets $L$ and $LxL$ are distinct and contained in
  $\varphi^{-1}(L)$. We repeat this argument inductively on the
  sequence, and obtain that $\varphi^{-k}(L)$ contains at least
  $(k+1)$ distinct double cosets of $L$. Thus $|L \backslash B / L| =
  \infty$.
\end{proof}

The previous discussion leads to the following:

\begin{prop}\label{prop:Meier:2nd:bc}
  Meier's finitely generated group~$\Gamma$ (Definition~\ref{defi:Meier})
  has large second bound\-ed cohomology.
\end{prop}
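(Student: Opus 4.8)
The plan is to reduce the statement to a direct application of Example~\ref{ex:large}.1 together with the transfer of largeness along the surjection $\psi \colon \Gamma \to T$ recorded in Remark~\ref{rem:H2:T:implies:H2:Gamma}. By that remark, it suffices to show that $T = B \ast_{F \cong \overline F} \overline B$ has large second bounded cohomology. In turn, by Example~\ref{ex:large}.1, an amalgamated free product $G \ast_C H$ has large second bounded cohomology as soon as $|C \backslash G / C| \geq 3$ and $C \neq H$; so the whole proof amounts to verifying these two combinatorial conditions for the amalgam defining $T$, where I take $G = B$, $C = F$ and $H = \overline B$.

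First I would check the double-coset condition $|F \backslash B / F| \geq 3$. This is exactly the content of Lemma~\ref{lem:dc}, which shows via the strictly nested sequence $F < \varphi^{-1}(F) < \varphi^{-2}(F) < \cdots$ that in fact $|F \backslash B / F| = \infty$, comfortably more than $3$. Second, I would observe that $F \neq \overline B$: indeed $\overline F$ is a free subgroup of rank $2$ inside $\overline B = \BS(2,3)$, and $\overline B$ is certainly not free (it is not even Hopfian and has nontrivial relations), so $F \cong \overline F$ is a proper subgroup of $\overline B$; equivalently $C \neq H$ in the notation of Example~\ref{ex:large}.1. Both hypotheses being satisfied, Example~\ref{ex:large}.1 yields that $T$ has large second bounded cohomology.

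Finally, I would pull the conclusion back to $\Gamma$: since the first-coordinate projection $T^{\N} \to T$ restricts to an epimorphism $\psi \colon \Gamma \to T$ (because $\Gamma$ contains the diagonal), Example~\ref{ex:large}.2 gives that $\Gamma$ inherits large second bounded cohomology from $T$. This is precisely the statement of Remark~\ref{rem:H2:T:implies:H2:Gamma}, so the proof is essentially a matter of assembling the two verified conditions and invoking that remark.

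I do not expect a genuine obstacle here, since the only substantive input—the infinitude of $|F \backslash B / F|$—has already been isolated in Lemma~\ref{lem:dc}. The single point requiring a word of care is the hypothesis $C \neq H$, i.e.\ that the amalgamating free subgroup $F \cong \overline F$ is a proper subgroup of the factor $\overline B$; this is immediate because a rank-$2$ free group cannot equal $\BS(2,3)$, but it is worth stating explicitly so that the application of Example~\ref{ex:large}.1 is unambiguous.

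\begin{proof}
  By Remark~\ref{rem:H2:T:implies:H2:Gamma}, it suffices to prove that
  $T$ has large second bounded cohomology. Recall that
  $T = B \ast_{F \cong \overline F} \overline B$, so we may apply the
  criterion of Example~\ref{ex:large}.1 with $G = B$, $C = F$ and
  $H = \overline B$. The condition $|F \backslash B / F| \geq 3$ holds
  because $|F \backslash B / F| = \infty$ by Lemma~\ref{lem:dc}.
  Moreover $F \neq \overline B$: indeed $F \cong \overline F$ is free of
  rank $2$, whereas $\overline B \cong \BS(2,3)$ is not free, so the
  amalgamating subgroup is a proper subgroup of the factor
  $\overline B$. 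Hence Example~\ref{ex:large}.1 applies and $T$ has
  large second bounded cohomology. By Example~\ref{ex:large}.2, the
  epimorphism $\psi \colon \Gamma \to T$ then shows that $\Gamma$ has
  large second bounded cohomology as well.
\end{proof}
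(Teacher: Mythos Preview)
Your proof is correct and follows exactly the paper's approach: reduce to $T$ via Remark~\ref{rem:H2:T:implies:H2:Gamma}, then apply Example~\ref{ex:large}.1 using Lemma~\ref{lem:dc} for the double-coset condition. The only difference is that you spell out the verification $F \neq \overline B$, which the paper leaves implicit.
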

\begin{proof}
  By Remark~\ref{rem:H2:T:implies:H2:Gamma}, we know that if $\Delta$ has
  large second bounded cohomology, then so does $\Gamma$.  On the
  other hand, Lemma~\ref{lem:dc} and Example~\ref{ex:large}.1
  immediately imply that $\Delta$ has large second bounded cohomology.
\end{proof}

\subsection{Proof of Theorem~\ref{thm:large}}\label{subsec:prooflarge}

We now have all the tools for giving a short proof of
Theorem~\ref{thm:large}:

\begin{proof}[Proof of Theorem~\ref{thm:large}]
  Let $\Gamma$ be Meier's finitely generated group, which is
  four-generated and torsion-free. We proved in
  Proposition~\ref{prop:Meier:2nd:bc} that $\Gamma$ has large second
  bounded cohomology and so it satisfies the hypothesis of
  Theorem~\ref{thm:large2}.
  
  Let $\Lambda$ be the fundamental group of an oriented closed
  connected hyperbolic $3$-manifold. We can choose $\Lambda$ to be
  $2$-generated by proceeding as follows.  Start with the figure-eight
  knot complement $M$, whose fundamental group is
  $2$-generated~\cite[Example VI.4.3]{knots}.  Then perform a suitable Dehn
  Filling on $M$ to obtain an oriented closed connected hyperbolic
  $3$-man\-i\-fold~$N$~\cite[Chapter 4]{Thurston}. The fundamental group
  $\Lambda$ of $N$ is obtained from the one of $M$ by adding relations.
  In particular it is still $2$-generated.
  
  The thesis
  now follows by taking $\Gamma \times \Lambda$, which is a
  torsion-free $6$-generated group, and applying
  Theorem~\ref{thm:large2}.
\end{proof}

\begin{scholium}[On finitely presented examples]
\label{scholium:fp}
  The criterion in Theorem~\ref{thm:large2}
  does not seem to help to construct finitely presented groups
  with large bounded cohomology. Indeed, as remarked by
  Hirshon in~1994: ``The question whether or not there exists a
  finitely presented group which is isomorphic to a proper direct
  factor of itself is an unsolved problem which has been around for a
  long time''~\cite{prod_fp2}.

  The most promising constructions in this direction are given by
  finitely presented groups that allow epimorphisms onto their direct
  square. These were first constructed by Baumslag and
  Miller~\cite{odd_fp}. Further examples were given by Hirshon and
  Meier~\cite{prod_fp}, see also~\cite{prod_fp2} for a list of
  striking properties of these groups.
  
  The group~$H$ constructed by Baumslag and Miller~\cite{odd_fp}
  surjects onto the group~$\Delta$ appearing in
  Definition~\ref{defi:Meier}.  Since $\Delta$ has large second bounded
  cohomology (Lemma~\ref{lem:dc} and Example~\ref{ex:large}.1), we
  then have $\dim_\R \HH^2_b(H; \R) \geq |\R|$. However, the
  description of the full bounded cohomology of~$H$ is strongly
  related to a better understanding of $\lex$ groups
  (Section~\ref{subs:quot}).  Indeed, since $H$ surjects onto~$H^d$,
  we can produce quotients of~$H$ with large bounded cohomology in
  every given even degree, but we cannot obtain any information about
  the bounded cohomology of~$H$ itself, if we do not know whether
  direct powers of~$H$ are~$\lex$.

  In Section~\ref{sec:largefp}, we give an ad-hoc example of
  a finitely presented group with large bounded cohomology.
\end{scholium}

\begin{scholium}[On the bounded cohomology of the free group]
The connection between finitely generated groups with large bounded
cohomology and $\lex$ groups goes even further.  Indeed, if $\Gamma$
is an $n$-generated group with large bounded cohomology, and $\Gamma$
is moreover $\lex$, then the free group $F_n$ has large bounded
cohomology.  In particular, if the group from Theorem~\ref{thm:large}
is $\lex$, then $F_6$ has large bounded cohomology.  As mentioned in
Example~\ref{ex:large}.4, it is known that non-abelian free groups
have large second and third bounded cohomology.  However, 
it is not known whether the bounded cohomology of 
non-abelian free groups vanish or not in degrees 
$4$ and above~\cite[Question~18.3]{BIMW}.

Note that proving that $F_6$ has large bounded cohomology
would have many strong consequences.
 Indeed, every acylindrically hyperbolic group
admits a hyperbolically embedded subgroup of the form $F_n \times K$,
where $K$ is finite, for all $n \geq 1$~\cite[Theorems 6.8 and
  6.14]{DGO}.  It then follows that if some non-abelian free group had
large bounded cohomology, then the same would be true of all
acylindrically hyperbolic groups~\cite[Theorem 1.1]{FPS} .
\end{scholium}

%%%%%%%%%%%%%%%%%%%%%%%%%%%%%%%%%%%%%%%%%%%%%%%%%%%%

\section{Finitely presented groups with large bounded cohomology}
\label{sec:largefp}

In Section~\ref{sec:large}, we gave a general recipe for producing finitely generated groups with large bounded cohomology.
As remarked in Scholium~\ref{scholium:fp}, this does not allow to construct finitely presented examples, given the current limited list of examples of groups isomorphic to their own direct factors.

In this section, we produce finitely presented (in fact, type~$F_\infty$) groups with large bounded cohomology, by using an ad-hoc construction, and building on previous work.

The main player is \emph{Thompson's group~$T$}:

\begin{defi}
  The Thompson group~$T$ is the group of orientation-pre\-ser\-ving
  piecewise linear homeomorphisms~$f$
  of the circle~$\R/\Z$ with the following properties:
\begin{enumerate}
\item $f$ has finitely many breakpoints, all of which lie in~$\Z[1/2]/\Z$;
\item Away from the breakpoints, the slope of~$f$ is a power of~$2$;
\item $f$ preserves~$\Z[1/2]/\Z$.
\end{enumerate}
\end{defi}

The group~$T$ and its siblings $F$ and~$V$ were introduced by Richard Thompson
in~1965; they are some of the most important groups in geometric and
dynamical group theory. We refer the reader to the literature~\cite{thompson} for a
detailed discussion.

The group $T$ is finitely presented, and even of
type~$F_\infty$.
The integral cohomology of~$T$ has been computed~\cite{cohoT}, and this result has interesting consequences for its bounded cohomology, as was first noted by Burger and Monod:

\begin{prop}[Ghys--Sergiescu \cite{cohoT}, Burger--Monod \cite{rigidity}]
\label{prop:cupT}

For each even integer~$n \geq 2$, we have~$\HH^n_b(T; \mathbb{R}) \not \cong 0$.
\end{prop}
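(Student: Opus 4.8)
The plan is to exploit the circle action of~$T$ together with the ring structure of bounded cohomology. The group~$T$ acts on the circle~$\R/\Z$ by orientation-preserving homeomorphisms, i.e.\ there is a homomorphism $T \to \Homeo_+(\R/\Z)$. Pulling back the (real) bounded Euler class of $\Homeo_+(\R/\Z)$ along this homomorphism yields a \emph{bounded Euler class} $e_b \in \HH^2_b(T;\R)$. The point is that the Euler cocycle of a circle action is a bounded cocycle (this is the classical boundedness underlying the Milnor--Wood inequality), so the ordinary Euler class~$e \in \HH^2(T;\R)$ of the action lifts to~$e_b$. I would then invoke the fact that the comparison map $c \colon \HH^\bullet_b(T;\R) \to \HH^\bullet(T;\R)$ is a homomorphism of graded~$\R$-algebras with respect to cup products, and that $c(e_b) = e$.

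First I would record the multiplicativity: for every $k \geq 1$ we have $c(e_b^{\,k}) = c(e_b)^k = e^k$ in $\HH^{2k}(T;\R)$, where $e_b^{\,k}$ denotes the $k$-fold cup power, which lives in $\HH^{2k}_b(T;\R)$. Next I would input the computation of the \emph{ordinary} cohomology ring of~$T$ due to Ghys--Sergiescu~\cite{cohoT}: their result exhibits the polynomial ring~$\R[e]$ on the Euler class as a subring of $\HH^\bullet(T;\R)$; in particular the Euler class is \emph{non-nilpotent}, so $e^k \neq 0$ for all $k \geq 1$. Combining the two facts, $c(e_b^{\,k}) = e^k \neq 0$ forces $e_b^{\,k} \neq 0$, and hence $\HH^{2k}_b(T;\R) \not\cong 0$. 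Since every even integer $n \geq 2$ is of the form $2k$ with $k \geq 1$, this establishes the proposition.

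The main obstacle is not any of the formal steps above, which are standard once set up correctly, but rather the substantial input furnished by Ghys--Sergiescu: the non-nilpotence of the Euler class in $\HH^\bullet(T;\R)$ rests on their full computation of the cohomology ring of Thompson's group~$T$, which I would quote as a black box. The only other point requiring care is verifying that the comparison map respects cup products and sends~$e_b$ to~$e$; this is the observation of Burger--Monod~\cite{rigidity}, and it relies on the naturality of the bounded Euler class under the homomorphism $T \to \Homeo_+(\R/\Z)$ together with the general fact that the comparison map is multiplicative. No estimate on seminorms is needed, since the argument detects non-vanishing purely through the ordinary cohomology class~$e^k$.
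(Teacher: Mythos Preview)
Your argument is correct and is exactly the approach the paper takes: the bounded Euler class~$e_b$ maps to the ordinary Euler class under the comparison map, cup powers are preserved, and the Ghys--Sergiescu computation shows~$e^k \neq 0$ for all~$k \geq 1$. The paper's own justification is essentially a one-line summary of what you wrote.
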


This result is obtained by noticing that every cup power of the Euler class of~$T$ is bounded, and using the fact that these cup powers are non-zero in ordinary cohomology~\cite{cohoT}.

\begin{thm}\label{thm:largefp}
Let $T$ be Thompson's group, and let $\Lambda$ be the fundamental group of an oriented closed connected hyperbolic $3$-manifold. Then $T \times \Lambda$ has large bounded cohomology.

Moreover, $T \times \Lambda$ is finitely presented, in fact type $F_\infty$.
\end{thm}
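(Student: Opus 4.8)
The plan is to establish the two assertions separately: first largeness of bounded cohomology, then the finiteness property. For the finiteness property, I would argue that a direct product of two groups of type~$F_\infty$ is again of type~$F_\infty$, so it suffices to recall that Thompson's group~$T$ is of type~$F_\infty$ (stated above) and that $\Lambda$, being the fundamental group of a closed aspherical manifold, is even of type~$F$ (it admits a finite classifying space, namely the manifold itself). In particular $T \times \Lambda$ is finitely presented. This part is routine.

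The substantive part is largeness. The natural strategy is to imitate the proof of Theorem~\ref{thm:large2}, replacing the role played there by the self-similar group~$\Gamma$ with $T$, whose relevant input is Proposition~\ref{prop:cupT}: $\HH^n_b(T;\R) \not\cong 0$ for every even~$n \geq 2$. By Proposition~\ref{prop:loneb}.2 (which says $\loneb 2 \geq \dim_\R \HH^2_b$), the non-vanishing $\HH^2_b(T;\R)\not\cong 0$ gives $\loneb 2(T) > 0$, and since $T$ is countable this already forces $\dim_\R \HH^2_b(T;\R) = |\R|$, i.e.\ $T$ has large second bounded cohomology. The point of using~$T$ rather than a self-similar group is that $T$ already supplies non-vanishing in \emph{all} even degrees directly, so I do not need a factor of the form $S^d$; instead I would feed the non-vanishing in each even degree into the product formula.

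Concretely, I would first record that $\loneb 2(T) > 0$ and $\loneb 3(\Lambda) > 0$, the latter from Example~\ref{exa:hyp} since $\Lambda$ is a hyperbolic $3$-manifold group. Then I would invoke the multiplicativity in Proposition~\ref{prop:loneb}.3, in the packaged form of Remark~\ref{rem:large3}: applying that remark with $S$ playing the role of a group with $\HH^2_b(S;\R)\not\cong 0$ is exactly the situation here, since we only know $\HH^2_b(T;\R)\not\cong 0$ (rather than an isomorphism $T \cong T \times S$). Remark~\ref{rem:large3} asserts precisely that under the hypothesis $\HH^2_b(S;\R)\not\cong 0$, the product $S \times \Lambda$ has large second and third bounded cohomology and $\loneb 2(S\times\Lambda) \geq |\R|$, and then all higher even degrees are obtained by writing them as $2d+2$ and all higher odd degrees as $2d+3$ via Remark~\ref{rem:loneb}. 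This covers every $n \geq 2$. The third degree, which is not of the form $2d+2$ or $2d+3$ with $d \geq 1$, is handled by the retraction $T\times\Lambda \to \Lambda$ together with Examples~\ref{ex:large}.3 and~\ref{ex:large}.4, since $\Lambda$ has large third bounded cohomology.

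The main obstacle, and the reason the statement is not an immediate corollary of Theorem~\ref{thm:large2}, is that $T$ is \emph{not} known to satisfy $T \cong T \times S$, so Theorem~\ref{thm:large2} does not apply verbatim; largeness of $\HH^n_b(T;\R)$ in all even degrees is instead imported from Proposition~\ref{prop:cupT}. Thus the delicate bookkeeping is to verify that the even-degree non-vanishing of $T$ together with the single cross-term $\loneb 3(\Lambda)>0$ genuinely covers \emph{all} degrees $n \geq 2$ without gaps, and in particular that degree~$3$ is not missed; this is exactly what the combination of Remark~\ref{rem:large3} with the retraction argument is designed to guarantee.
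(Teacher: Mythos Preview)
Your proposal is correct and follows the paper's own approach: invoke Proposition~\ref{prop:cupT} for the non-vanishing of~$\HH^n_b(T;\R)$ in all even degrees, record that~$\Lambda$ has large second and third bounded cohomology together with~$\loneb 3(\Lambda)>0$, handle degree~$3$ via the retraction onto~$\Lambda$, and otherwise argue as in the proof of Theorem~\ref{thm:large2} and Remark~\ref{rem:large3}. One small correction to your reading: Remark~\ref{rem:large3} still assumes~$\Gamma\cong\Gamma\times S$ and concerns~$\Gamma\times\Lambda$, not~$S\times\Lambda$---both you and the paper are citing it as a template whose argument is to be imitated (with the self-similarity replaced by the direct input from Proposition~\ref{prop:cupT}), not as a statement that applies verbatim to~$T$.
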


\begin{proof}
By Proposition~\ref{prop:cupT}, we have $\HH^n_b(T; \mathbb{R}) \not \cong 0$ for every even integer~$n \geq 2$.
Using that $\Lambda$ has large second and third bounded cohomology, and $\loneb 3(\Lambda) > 0$, we conclude in the same way as in the proof of Theorem~\ref{thm:large2} (see also Remark~\ref{rem:large3})
\end{proof}

%%%%%%%%%%%%%%%%%%%%%%%%%%%%%%%%%%%%%%%%%%%%%%%%%%%%%%%%%%%
\section{Non-computability}\label{sec:noncomp}

In the following, we prove Theorem~\ref{ithm:noncomp} and
Theorem~\ref{ithm:noncompspaces}, i.e., we show that many decision
problems concerning bounded cohomology are \emph{not} algorithmically
decidable:

\begin{thm}[Non-computability of bounded cohomology]\label{thm:noncomp}
  Let $d \in \N_{\geq 2}$ and let $D \in \N$.  Then all of the
  following algorithmic problems are undecidable: Given a finite
  presentation~$\genrel SR$, decide whether
  \begin{enumerate}
  \item \label{nc:trivial}
    $\HH^{d}_b(\genrel SR;\R) \cong 0$ or not; 
  \item \label{nc:dim}
    $\dim_\R \HH_b^{d}(\genrel SR;\R) \leq D$ or not;
  \item \label{nc:infdim}
    $\HH^{d}_b(\genrel SR;\R)$ is large (Definition~\ref{defi:groups:with:large:bounded2}) or not;
  \item \label{nc:l1}
    $\lonehom {d} (\genrel SR;\R) \cong 0$ or not;
  \item \label{nc:l1red}
    $\loneb {d} (\genrel SR) > 0$ or not; 
  \item \label{nc:bacyclic}
    $\genrel SR$ is boundedly acyclic or not;
  \item \label{nc:cdb}
    $\cdb \genrel SR \leq D$ or not. 
  \item \label{nc:hdb}
    $\hdb \genrel SR \leq D$ or not.
  \end{enumerate}
\end{thm}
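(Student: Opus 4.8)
The plan is to reduce from an undecidable decision problem about finite presentations --- concretely, the triviality problem (the set of finite presentations of the trivial group is recursively enumerable but not recursive, by Adian--Rabin), or equivalently the word problem for a fixed finitely presented group with unsolvable word problem. The goal is a single recursive \emph{witness assignment} $P \mapsto W(P)$ from finite presentations to finite presentations realising the following dichotomy: if $Q = \genrel SR$ is trivial then $W(P)$ is boundedly acyclic, whereas if $Q$ is nontrivial then $W(P)$ retracts onto a fixed finitely presented seed group $L$ with large bounded cohomology. Since the assignment is recursive and the triviality problem is undecidable, each of the eight properties below --- all of which separate these two cases --- is then undecidable.

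The second step is to record how each item follows from the dichotomy, adjusting the seed $L$. For items~(\ref{nc:trivial}), (\ref{nc:dim}), (\ref{nc:infdim}), (\ref{nc:bacyclic}), (\ref{nc:cdb}) and~(\ref{nc:hdb}) I would take $L = T \times \Lambda$, with $\Lambda$ the fundamental group of an oriented closed connected hyperbolic $3$-manifold: by Theorem~\ref{thm:largefp} this is finitely presented with large bounded cohomology in \emph{all} degrees $\geq 2$, so in the nontrivial case $W(P)$ has large bounded cohomology in all such degrees (Example~\ref{ex:large}.3), whence $\cdb = \hdb = \infty$ and bounded acyclicity fails; in the trivial case $W(P)$ is boundedly acyclic, so $\HH^d_b(W(P);\R) \cong 0$, $\dim_\R \HH^d_b(W(P);\R) = 0 \leq D$, and $\cdb = \hdb = 0 \leq D$. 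For items~(\ref{nc:l1}) and~(\ref{nc:l1red}) I would instead take $L = \Lambda_d$, the fundamental group of an oriented closed connected hyperbolic $d$-manifold, so that $\loneb d(\Lambda_d) > 0$ (Example~\ref{exa:hyp}); a retraction transports this lower bound, giving $\loneb d(W(P)) > 0$ and hence $\lonehom d(W(P);\R) \not\cong 0$ in the nontrivial case, while in the trivial case bounded acyclicity forces both reduced and unreduced $\ell^1$-homology to vanish in positive degrees, by the duality between bounded cohomology and $\ell^1$-homology.

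The third step, the technical heart, is the construction of the witness. Starting from a Boone--Britton/Adian--Rabin-type encoding that produces from $P$ a finitely presented group with a distinguished element which is trivial precisely when $Q$ is trivial, I would graft on the seed $L$ so that its copy survives as a \emph{retract} exactly when that element is nontrivial, and so that the group collapses onto a boundedly acyclic group --- e.g.\ an amenable quotient, or a subgroup included co-amenably into a mitotic group via Theorem~\ref{thm:mitotic:bac} and Proposition~\ref{prop:coam} --- when the element is trivial.

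The main obstacle lies precisely here: bounded cohomology is \emph{not} monotone under passage to subgroups or quotients --- indeed every group embeds into a boundedly acyclic group (Theorem~\ref{thm:fg}) --- so, unlike in the classical Markov-property setting or in Gordon's argument for $\HH_2$, it does not suffice to make $L$ a subgroup in the nontrivial case. One must instead produce a genuine retraction (or co-amenable inclusion), so that the large bounded cohomology of $L$ actually persists, while \emph{simultaneously} certifying bounded acyclicity in the trivial case. Engineering both sides within one recursive finite-presentation-to-finite-presentation assignment, uniformly in the fixed degree $d$ and threshold $D$, is the crux of the proof.
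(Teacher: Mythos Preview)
Your overall reduction strategy --- build a recursive witness $w \mapsto \Gamma_w$ that is boundedly acyclic when $w$ is trivial and has large $d$-th bounded cohomology otherwise --- is exactly the shape of the paper's argument for parts~(\ref{nc:trivial})--(\ref{nc:bacyclic}). But two points deserve correction.

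First, you are working too hard on parts~(\ref{nc:cdb}) and~(\ref{nc:hdb}). Unlike $\HH^d_b(\args;\R)$, the invariants $\cdb$ and $\hdb$ \emph{are} monotone under passing to subgroups (Proposition~\ref{prop:dimmono}, a Shapiro-type argument over all Banach coefficients). Hence ``$\cdb \leq D$'' and ``$\hdb \leq D$'' are genuine Markov properties --- trivial group as positive witness, a hyperbolic $(D{+}1)$-manifold group as negative witness --- and the Adian--Rabin theorem applies directly. No witness construction is needed here.

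Second, for parts~(\ref{nc:trivial})--(\ref{nc:bacyclic}) you correctly identify the obstacle (non-monotonicity, so merely containing $L$ is useless) but then leave the crux unsolved, aiming for a \emph{retract} onto $L$. The paper does not build a retract; it uses two simpler devices. In degrees $2$ and~$3$ (and for~(\ref{nc:bacyclic})) it takes $\Gamma_w := \Lambda_w * \Z$, where $\Lambda_w$ is the Adian--Rabin output: if $w$ is trivial this is $\Z$, hence amenable; if not, it is a non-elementary free product, hence acylindrically hyperbolic with large $\HH^2_b$ and $\HH^3_b$. For degrees $d \geq 4$ it uses Weinberger's amalgam $W(\Gamma,\Lambda,w) = \Gamma *_\Z \Lambda_w *_\Z \cdots *_\Z \Lambda_w$, gluing each generator of the seed $\Gamma$ to the distinguished infinite-order element $\overline w \in \Lambda_w$: this collapses to the trivial group when $w$ is trivial, and is an iterated amalgamated product over~$\Z$ otherwise. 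The transfer of large bounded cohomology and of $\loneb d$ from the seed factor is then furnished by Lemma~\ref{lem:largeamalg} (amalgamated products over amenable subgroups admit an isometric section $\HH^d_b(\Gamma;\R) \hookrightarrow \HH^d_b(\Gamma *_A \Lambda;\R)$), not by a retraction. Taking the seed to be $F_2$ times a hyperbolic $(d-2)$-manifold group and using Remark~\ref{rem:loneb} finishes the higher-degree cases. So the ``crux'' you flag is resolved by replacing your hoped-for retract with the amalgam-over-amenable injectivity lemma, which is both easier to prove and easier to plug into a recursive construction.
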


Basic terminology and properties of bounded (co)homological dimension
are recorded in Appendix~\ref{appx:cdb}.

\begin{cor}[Non-computability of bounded cohomology for spaces]\label{cor:noncompspaces}
  Let $d \in \N_{\geq 2}$, and let $D \in \N$. Then all of the
  following algorithmic problems are undecidable: Given a finite
  simplicial complex~$X$, decide whether
  \begin{enumerate}
  \item\label{ncs:trivial} $\HH_b^{d}(X;\R) \cong 0$ or not;
  \item\label{ncs:dim} $\dim_\R \HH_b^{d}(X;\R) \leq D$ or not;
  \item\label{ncs:infdim} $\HH_b^{d}(X;\R)$ is large or not;
  \item $\lonehom {d}(X;\R) \cong 0$ or not;
  \item $X$ is boundedly acyclic or not;
  \end{enumerate}
\end{cor}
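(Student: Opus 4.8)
The plan is to reduce each of the decision problems for spaces to the corresponding problem for finitely presented groups, which is already known to be undecidable by Theorem~\ref{thm:noncomp}. The bridge between the two worlds is Gromov's Mapping Theorem, which identifies the bounded cohomology (and, dually, the $\ell^1$-homology) of a connected complex with that of its fundamental group, so that all the invariants in question depend only on~$\pi_1$.

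First I would turn the combinatorial input into a space. From a finite presentation~$\genrel SR$ one builds the associated presentation $2$-complex~$X_0$, with a single vertex, one edge per generator, and one $2$-cell glued along each relator, so that $\pi_1(X_0)\cong\genrel SR$. This is a finite CW-complex; replacing it by a simplicial subdivision (for instance the simplicial approximation of~$X_0$) yields a finite simplicial complex~$X$ with $\pi_1(X)\cong\genrel SR$. The assignment $\genrel SR\mapsto X$ is computable, since every step is an explicit finite construction driven by the presentation.

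Next I would invoke Gromov's Mapping Theorem~\cite{vbc,Frigerio}: for the connected complex~$X$ the canonical map to a classifying space induces a degreewise isomorphism (in fact isometric) $\HH^\bullet_b(X;\R)\cong\HH^\bullet_b(\pi_1(X);\R)=\HH^\bullet_b(\genrel SR;\R)$, and dually $\lonehom\bullet(X;\R)\cong\lonehom\bullet(\genrel SR;\R)$~\cite{loehl1}. Since this is a linear isomorphism in each degree, it preserves exactly the data recorded in the statement: vanishing of~$\HH^d_b$, its dimension, whether it is large, vanishing of~$\lonehom d$, and—reading off all degrees at once—bounded acyclicity. Consequently $X$ is a yes-instance of each of the problems~(1)--(5) if and only if $\genrel SR$ is a yes-instance of the matching problem of Theorem~\ref{thm:noncomp}, namely parts~(1),~(2),~(3),~(4), and~(6) respectively.

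Finally, composing the computable map $\genrel SR\mapsto X$ with any hypothetical decision procedure for one of the space problems would decide the corresponding group problem, contradicting Theorem~\ref{thm:noncomp}; hence all of~(1)--(5) are undecidable. I expect the only delicate points to be purely bookkeeping: confirming that the Mapping Theorem isomorphism is genuinely degreewise, so that it transports not merely vanishing but also dimension and largeness, and checking that $\genrel SR\mapsto X$ can be arranged to output a \emph{simplicial} complex by an algorithm. The sole conceptual ingredient is the passage from groups to spaces through the Mapping Theorem; once that identification is secured, undecidability transfers for free.
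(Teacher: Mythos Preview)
Your argument is correct and follows essentially the same route as the paper: build, algorithmically, a finite simplicial complex from the presentation complex (the paper uses the double barycentric subdivision), then invoke Gromov's Mapping Theorem and its $\ell^1$-homology analogue to identify the space invariants with the group invariants, reducing each problem to the corresponding undecidable case of Theorem~\ref{thm:noncomp}. Your matching of parts (1)--(5) of the corollary with parts (\ref{nc:trivial}), (\ref{nc:dim}), (\ref{nc:infdim}), (\ref{nc:l1}), (\ref{nc:bacyclic}) of that theorem is exactly right.
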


Here, finite simplicial complexes~$X$ are given as the finite set~$V$
of their vertices together with the finite subsets of~$V$ that
constitute simplices of~$X$. For the definition of bounded cohomology
and $\ell^1$-homology of spaces we refer to the
literature~\cite{vbc}\cite[Chapters~5 and 6]{Frigerio}.

%%%%%%%%%%%%%%%%%%%%%%%%
\subsection{Proof of Theorem~\ref{thm:noncomp}, parts~(\ref{nc:cdb}), (\ref{nc:hdb})}

We use the Adian--Rabin Theorem. For the sake of completeness, we
recall the basics; we refer to Rotman's book~\cite[Chapter~12]{rotman}
for further details and history.

\begin{defi}[Markov property]
  A subclass~$P$ of the class of all finitely presentable groups is a
  \emph{Markov property} if the following properties all hold:
  \begin{itemize}
  \item The class~$P$ is closed under taking isomorphisms.
  \item \emph{Positive witness.} The class~$P$ is non-empty.
  \item \emph{Negative witness.} There exists a finitely presentable
    group that is \emph{not} isomorphic to a subgroup of an element
    of~$P$.
  \end{itemize}
\end{defi}

\begin{thm}[Adian--Rabin~\protect{\cite[Theorem~12.32]{rotman}}]\label{thm:adianrabin}
  Let $P$ be a Markov class of finitely presentable groups. Then the
  following algorithmic problem is undecidable:
  \begin{itemize}
  \item[] Given a finite presentation~$\genrel SR$, decide whether
    $\genrel SR$ lies in~$P$ or not.
  \end{itemize}
\end{thm}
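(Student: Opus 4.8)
The plan is to reduce the decision problem for~$P$ to the word problem of a fixed finitely presented group, which is unsolvable by the Novikov--Boone theorem. Fix once and for all a finitely presented group $G_0 = \genrel SR$ with unsolvable word problem, together with a \emph{positive witness} $A \in P$ and a \emph{negative witness}~$B$, that is, a finitely presented group admitting no embedding into any member of~$P$ (these exist precisely because $P$ is Markov). The aim is to produce an \emph{algorithm} that, given a word~$w$ in~$S$, outputs a finite presentation of a group $\Delta_w$ with
$$ \Delta_w \cong 1 \iff w =_{G_0} 1, \qquad w \neq_{G_0} 1 \ \Longrightarrow\ B \hookrightarrow \Delta_w. $$
Setting $G_w \coloneqq \Delta_w * A$, we then have $G_w \cong A \in P$ when $w =_{G_0} 1$, whereas when $w \neq_{G_0} 1$ the copy of~$B$ inside $\Delta_w \le G_w$ shows $G_w \notin P$, since no member of~$P$ contains~$B$ and $P$ is closed under isomorphism. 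As $w \mapsto G_w$ is computable, any decision procedure for ``$\genrel SR \in P$'' would decide whether $w =_{G_0} 1$, a contradiction.

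The heart of the matter is the uniform construction of~$\Delta_w$. First I would build $\Delta_w$ on top of the free product $G_0 * B$ by adjoining finitely many stable letters through a cascade of HNN-extensions and amalgamated free products, governed by ``switch'' relations of the schematic form $t^{-1} x t = x \cdot v$, where $v$ is a suitable word built from~$w$. The relations are engineered so that the single hypothesis $w = 1$ propagates through the successive layers, trivialising in turn the generators~$S$ of~$G_0$ and the generators of~$B$, until the entire group collapses to~$1$. Conversely, if $w \neq 1$ then each adjunction is a \emph{genuine} HNN-extension or a proper amalgam, so the base group embeds into the next stage by Britton's Lemma and the normal form theorem for amalgamated products; iterating, the distinguished copy of~$B$ survives faithfully in~$\Delta_w$. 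One checks that each relation, and hence the whole presentation, can be written down mechanically from~$w$, so that $w \mapsto \Delta_w$ is computable.

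The main obstacle is to keep the two requirements compatible throughout the layered construction: the switch relations must be strong enough to force a \emph{complete} collapse to~$1$ when $w = 1$, yet weak enough to create no new relation among the elements of~$B$ when $w \neq 1$. The collapse direction is a finite relation-chase and is comparatively soft. The non-collapse direction is where the real work lies: at each stage one must verify that the associated subgroups of the HNN-extension are correctly and isomorphically identified, and that the amalgamated subgroups meet the hypotheses of the normal form theorem, so that Britton's Lemma certifies faithful embedding and no accidental relation in~$B$ is introduced. Arranging the construction so that the hypotheses of Britton's Lemma and the collapse relations coexist is the delicate point; the remaining verifications---finiteness of the presentation, computability of $w \mapsto \Delta_w$, and the implication $B \hookrightarrow G_w \Rightarrow G_w \notin P$---are routine.
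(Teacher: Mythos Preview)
The paper does not give a proof of this theorem: it is stated as a classical result with a citation to Rotman and is used as a black box. So there is no ``paper's own proof'' to compare against. That said, the paper does invoke the key construction underlying the proof (the Rabin witness groups~$\Lambda_w$ with $\Lambda_w \cong 1 \Leftrightarrow w =_{G_0} 1$) in Construction~\ref{constr:rabinlemma}, again citing Rotman for the details.

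Your sketch is the standard Adian--Rabin argument and is correct at the level of an outline: reduce to the word problem via a computable family~$w \mapsto \Delta_w$ that collapses to~$1$ exactly when $w =_{G_0} 1$ and otherwise contains the negative witness~$B$; then $G_w = \Delta_w * A$ discriminates~$P$. Your identification of the delicate point---arranging the HNN/amalgam tower so that Britton's Lemma guarantees faithful embedding of~$B$ when $w \neq 1$, while the switch relations still force total collapse when $w = 1$---is exactly right, and matches the content of the cited lemma in Rotman. The only thing missing from a complete proof is the explicit construction of~$\Delta_w$ and the verification of the Britton hypotheses at each stage, which you acknowledge but do not carry out.
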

  
\begin{lemma}\label{lem:cdbmarkov}
  Let $D \in \N$.
  \begin{enumerate}
  \item
    The class of all finitely presentable groups~$\Gamma$ with~$\cdb
    \Gamma \leq D$ is a Markov property.
  \item
    The class of all finitely presentable groups~$\Gamma$ with~$\hdb
    \Gamma \leq D$ is a Markov property.
  \end{enumerate}
\end{lemma}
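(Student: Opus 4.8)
The plan is to verify, for each of the two classes, the three defining conditions of a Markov property: closure under isomorphism, existence of a positive witness, and existence of a negative witness. Write $P_{\cdb}$ (resp.\ $P_{\hdb}$) for the class of finitely presentable groups~$\Gamma$ with $\cdb \Gamma \le D$ (resp.\ $\hdb \Gamma \le D$). Closure under isomorphism is immediate, since $\cdb$ and $\hdb$ are isomorphism invariants. For the positive witness I would take the trivial group: it is finitely presentable and boundedly acyclic, so $\cdb 1 = 0 = \hdb 1 \le D$; hence both classes are non-empty.

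The content lies in the negative witness. Here I would first invoke the monotonicity of bounded (co)homological dimension under subgroups recorded in Appendix~\ref{appx:cdb}: if $H \le \Gamma$, then $\cdb H \le \cdb \Gamma$ and $\hdb H \le \hdb \Gamma$. Granting this, it suffices to exhibit a single finitely presentable group of bounded (co)homological dimension strictly larger than~$D$, since then that group cannot be isomorphic to a subgroup of any member of $P_{\cdb}$ (resp.\ $P_{\hdb}$). Set $n := \max\{D+1, 2\}$ and let $\Lambda$ be the fundamental group of an oriented closed connected hyperbolic $n$-manifold; such manifolds exist in every dimension $n \ge 2$ by classical arithmetic constructions, and $\Lambda$ is finitely presented as the fundamental group of a closed manifold. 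By Example~\ref{exa:hyp} we have $\loneb n(\Lambda) > 0$, so that $\lonehom n(\Lambda;\R) \not\cong 0$ and, by Proposition~\ref{prop:loneb}.1, $\HH^n_b(\Lambda;\R) \not\cong 0$. As $n > D$, this gives $\cdb \Lambda \ge n > D$ and $\hdb \Lambda \ge n > D$, so $\Lambda$ serves as negative witness for both classes, and the Adian--Rabin machinery then applies.

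The main obstacle is the subgroup monotonicity of $\cdb$ and $\hdb$, which I expect to be the only nontrivial input and which is precisely what the appendix should supply: for $\cdb$ it follows from the bounded version of Shapiro's lemma (passing from a coefficient module on~$H$ to its coinduction to~$\Gamma$), and dually for $\hdb$ via induction in $\ell^1$-homology. Everything else---finite presentability of the witnesses, the existence of hyperbolic manifolds in all dimensions, and the non-vanishing computations---is either classical or already available from Example~\ref{exa:hyp} and Proposition~\ref{prop:loneb}.
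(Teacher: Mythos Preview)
Your proof is correct and matches the paper's argument essentially verbatim: trivial group as positive witness, fundamental group of a closed hyperbolic $n$-manifold (with $n>D$) as negative witness, and subgroup monotonicity of $\cdb$ and $\hdb$ (Proposition~\ref{prop:dimmono}) to rule out embeddings. One small quibble: ``boundedly acyclic'' concerns only trivial $\R$-coefficients and so does not by itself yield $\cdb 1 = 0$; but the trivial group has vanishing bounded cohomology with \emph{all} Banach coefficients in positive degrees (Example~\ref{exa:hdbcdb}), so the conclusion stands.
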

\begin{proof}
  Clearly, these classes of groups are closed under isomorphisms.

  \emph{Positive witness.} The trivial group is a positive witness.

  \emph{Negative witness.}  In view of the hyperbolic examples in
  Example~\ref{exa:hdbcdb}, there exists a finitely presentable
  group~$\Lambda$ with~$\hdb \Lambda > D$ and $\cdb \Lambda > D$.  By
  the monotonicity of bounded (co)homological dimension
  (Proposition~\ref{prop:dimmono}), it follows that $\Lambda$ is
  \emph{not} isomorphic to subgroups of groups~$\Gamma$ with~$\hdb
  \Gamma \leq D$ or~$\cdb \Gamma\leq D$. Hence, these groups provide 
  the desired negative witnesses.
\end{proof}

\begin{proof}[Proof of Theorem~\ref{thm:noncomp}.(\ref{nc:cdb}), (\ref{nc:hdb})]
  The claims follow by applying the Adian--Rabin Theorem
  (Theorem~\ref{thm:adianrabin}) to the Markov class of finitely
  presentable groups~$\Gamma$ with~$\cdb \Gamma \leq D$ or $\hdb
  \Gamma \leq D$, respectively (Lemma~\ref{lem:cdbmarkov}).
\end{proof}

\begin{rem}[(non-)Markov properties]\label{rem:markov}
  \hfil
  \begin{itemize}
  \item 
    Amenability of finitely presentable groups is a Markov property:
    The trivial group is a positive witness; the free group of
    rank~$2$ is a negative witness.
  \item
    However, bounded acyclicity of finitely presentable groups is
    \emph{not} a Markov property: In view of Corollary~\ref{cor:fp},
    there do \emph{not} exist negative witnesses.
  \item
    Not being boundedly acyclic is \emph{not} a Markov property of
    finitely presentable groups. Assume by contradiction that there
    exists a negative witness~$\Gamma$. Then $\Gamma$ is isomorphic to
    a subgroup of~$\Gamma \times F_2$ and $\Gamma \times F_2$ is not
    boundedly acyclic by Proposition~\ref{prop:loneb}.
  \item
    Similarly, (not) having large bounded cohomology is not a Markov property.
    One can obtain this via the same reasoning as before,
    using the group from Theorem~\ref{thm:fp:large:bdd} instead of the free group.
  \end{itemize}
\end{rem}

In view of Remark~\ref{rem:markov}, the Adian--Rabin Theorem
(Theorem~\ref{thm:adianrabin}) cannot be applied directly to establish
the remaining parts of Theorem~\ref{thm:noncomp}.

%%%%%%%%%%%%%%%%%%%%%%%%
\subsection{Proof of Theorem~\ref{thm:noncomp}, parts~(\ref{nc:trivial})--(\ref{nc:bacyclic})}

We use the standard technique of turning the word problem into group
presentations.

\begin{construction}\label{constr:rabinlemma}
  By the Novikov--Boone--Britton Theorem~\cite[Theorem~12.8]{rotman},
  there exists a finitely presented group~$\Lambda$ with unsolvable
  word problem; let $\genrel SR$ be a finite presentation of~$\Lambda$
  with symmetric generating set~$S$.  By the proof of the Adian--Rabin
  Theorem~\cite[Lemma~12.31]{rotman}, there exists an algorithm
  \begin{align*}
    \text{words over~$S$}
    & \longrightarrow
    \text{finite presentations}
    \\
    w
    & \longmapsto \genrel {S_w}{R_w}
  \end{align*}
  with the following property: For all words~$w$ over~$S$, we have
  \[ \text{$w$ represents the neutral element of~$\Lambda$}
  \Longleftrightarrow
  \genrel {S_w}{R_w} \cong 1.
  \]
  For a word~$w$ over~$S$, we write~$\Lambda_w := \genrel{S_w}{R_w}$.

  In addition, this construction can be refined as follows: We can take
  $\Lambda$ to be torsion-free~\cite[Theorem~12 on p.~88]{millerbook}
  and we can assume that there is an algorithm
  \begin{align*}
    \text{words over~$S$} & \longrightarrow \text{words over~$S_w$}
    \\
    w & \longmapsto \overline w
  \end{align*}
  with the following property~\cite[(proof of) Lemma~12.31]{rotman}:
  If $w$ does not represent the neutral element in~$\Lambda$, then
  $\overline w$ has infinite order in~$\Lambda_w$.
\end{construction}

Via the groups~$\Lambda_w$, we can reduce the algorithmic problems in
Theorem~\ref{thm:noncomp}.(\ref{nc:trivial})---(\ref{nc:bacyclic}) to
the word problem in~$\Lambda$. As a preliminary stage, we take the
free product with~$\Z$ (this will be sufficient for the degrees $2$
and~$3$, as well as for the claim on bounded acyclicity):

\begin{construction}\label{constr:rabinZ}
  In the situation of Construction~\ref{constr:rabinlemma}, for
  words~$w$ over~$S$, we consider
  \[ \Gamma_w := \genrel{S_w,t}{R_w} \cong \Lambda_w * \Z,
  \]
  where $t \not\in S$ is a new generator. By construction, we have:
  \begin{itemize}
  \item If $w$ represents the neutral element of~$\Lambda$,
    then
    \[ \Gamma_w \cong \Lambda_w * \Z \cong 1 * \Z \cong \Z.\]
    In particular, $\Gamma_w$ is amenable.
  \item If $w$ does \emph{not} represent the neutral element
    of~$\Lambda$, then $\Lambda_w \not\cong 1$ and so $\Gamma_w$ is a
    non-elementary free product, i.e., a free product~$A * B$, where
    $A$ and $B$ are non-trivial and we do not have~$A \cong \Z/2 \cong
    B$.  In particular, $\Gamma_w$ is an acylindrically hyperbolic
    group~\cite[Corollary~2.2]{Minasyan:Osin}.
  \end{itemize}
  Therefore, in bounded cohomology, we obtain:
  \begin{itemize}
  \item If $w$ represents the neutral element of~$\Lambda$, then
    $\Gamma_w$ is amenable and so it is boundedly acyclic; in particular,
    $\Gamma_w$ also has trivial reduced and unreduced
    $\ell^1$-homology~\cite[Corollary~2.4]{MM}.
  \item
    If $w$ does \emph{not} represent the neutral element of~$\Lambda$,
    then $\HH^2_b(\Gamma_w;\R)$ and $\HH^3_b(\Gamma_w;\R)$ are large
    (Example~\ref{ex:large}). 
    Moreover, $\loneb 2(\Gamma) = |\R|$ (Proposition~\ref{prop:loneb}).
  \end{itemize}
\end{construction}

For the higher degree case of Theorem~\ref{thm:noncomp}, we use
the witness construction of Weinberger~\cite[Chapter~2.6]{weinberger}.

\begin{construction}\label{constr:rabinamalg}
  In the situation of Construction~\ref{constr:rabinlemma}, we consider
  the following witness groups: Let $\Gamma = \genrel {S'}{R'}$ be a
  torsion-free finitely presented group, where $S' = \{s'_1, \dots, s'_k\}$
  and where all elements of~$S'$ are non-trivial in~$\Gamma$.
  For words~$w$ over~$S$, we define
  \[ W(\Gamma,\Lambda,w)
     \coloneqq \Gamma *_\Z \Lambda_w *_\Z \Lambda_w *_\Z \dots *_\Z \Lambda_w,
  \]
  where the $k$-fold push-out group is given by the glueings
  of~$\Gamma$ and~$\Lambda_w$ over the maps~$s'_j \mapsfrom 1 \mapsto
  \overline w$ for~$j \in \{1,\dots, k\}$.
  In other words:
  $$W(\Gamma, \Lambda, w) = \bigl\langle \Gamma; (\Lambda_w)_j : j = 1, \ldots, k \bigm| s'_j = \overline{w} \in (\Lambda_w)_j : j = 1, \ldots, k \bigr\rangle.$$
  The whole construction is
  algorithmic in the sense that we can also algorithmically give 
  finite presentations of~$W(\Gamma,\Lambda,w)$.
  By construction, we have:
  \begin{itemize}
  \item If $w$ represents the neutral element of~$\Lambda$,
    then~$W(\Gamma,\Lambda,w) \cong 1$, because all generators of~$\Gamma$
    are killed and $\Lambda_w \cong 1$.
  \item If $w$ does \emph{not} represent the neutral element of~$\Lambda$,
    then the construction of~$W(\Gamma,\Lambda,w)$ is a proper iterated
    $k$-fold amalgamated free product, because $s'_j$ and $\overline w$
    have infinite order in~$\Gamma$ and $\Lambda_w$, respectively. Moreover,
    the amalgamation is performed over the amenable group~$\Z$. 
    In particular, for~$d \in \N_{\geq 2}$, we have
    (Lemma~\ref{lem:largeamalg}): If $\HH_b^d(\Gamma;\R)$ is large,
    then also $\HH_b^d(W(\Gamma,\Lambda,w);\R)$ is large. Finally,
    $\loneb d (W(\Gamma,\Lambda,w)) \geq \loneb d (\Gamma)$.
  \end{itemize}
\end{construction}

\begin{lemma}\label{lem:largeamalg}
  Let $\Gamma$ and $\Lambda$ be countable groups, let $\Gamma *_A
  \Lambda$ be an amalgamated free product over a common amenable subgroup~$A$,
  and let $d \in \N_{>0}$.
  \begin{enumerate}
  \item If $\HH_b^d(\Gamma;\R)$ is large, then so
    is~$\HH_b^d(\Gamma*_A\Lambda;\R)$.
  \item We have~$\loneb d (\Gamma *_A \Lambda) \geq \loneb d (\Gamma)$.
  \end{enumerate}
\end{lemma}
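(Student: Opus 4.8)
The plan is to analyse the amalgamated product through a Mayer--Vietoris sequence in bounded cohomology, exploiting that the edge group~$A$ is boundedly acyclic (being amenable, by Johnson~\cite{Johnson}). Writing $G \coloneqq \Gamma *_A \Lambda$, I would realise $G$ as the fundamental group of the pushout $X = B\Gamma \cup_{BA} B\Lambda$ of classifying spaces along the $\pi_1$-injective maps induced by $A \hookrightarrow \Gamma$ and $A \hookrightarrow \Lambda$. The Mayer--Vietoris sequence for bounded cohomology of spaces, combined with Gromov's Mapping Theorem~\cite{vbc} (equivalently, the group-theoretic sequence of Bouarich~\cite{Bouarich}; see also~\cite{Frigerio}) identifying $\HH_b^\bullet(X;\R)$ with $\HH_b^\bullet(G;\R)$ and the analogous identifications for the pieces, yields the exact sequence
\[
\cdots \to \HH_b^{n-1}(A;\R) \to \HH_b^n(G;\R) \xrightarrow{\ (\res_\Gamma,\,\res_\Lambda)\ } \HH_b^n(\Gamma;\R) \oplus \HH_b^n(\Lambda;\R) \to \HH_b^n(A;\R) \to \cdots
\]
Since $A$ is amenable, $\HH_b^n(A;\R) \cong 0$ for all $n \geq 1$. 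Hence for every $d \geq 2$ both neighbouring $A$-terms vanish, the map $(\res_\Gamma,\res_\Lambda)$ is an isomorphism, and in particular the restriction $\res_\Gamma \colon \HH_b^d(G;\R) \to \HH_b^d(\Gamma;\R)$ is surjective. This already proves~(1): if $\HH_b^d(\Gamma;\R)$ is large, then $\dim_\R \HH_b^d(G;\R) \geq \dim_\R \HH_b^d(\Gamma;\R) \geq |\R|$.

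For~(2) I would pass to the dual side via the evaluation pairing $\HH_b^d(\args;\R) \otimes_\R \rlonehom d(\args;\R) \to \R$. Let $\iota \colon \Gamma \hookrightarrow G$ denote the inclusion, so that $\iota^\ast = \res_\Gamma$ on bounded cohomology while $\iota_\ast$ is the induced map on reduced $\ell^1$-homology, the two being related by the adjunction $\langle \iota^\ast \omega, c\rangle = \langle \omega, \iota_\ast c\rangle$. I claim $\iota_\ast \colon \rlonehom d(\Gamma;\R) \to \rlonehom d(G;\R)$ is injective. Indeed, if $\iota_\ast c = 0$, then for every $\beta \in \HH_b^d(\Gamma;\R)$ the surjectivity of $\res_\Gamma$ from the first step provides $\omega \in \HH_b^d(G;\R)$ with $\iota^\ast \omega = \beta$, whence $\langle \beta, c\rangle = \langle \omega, \iota_\ast c\rangle = 0$; as bounded cohomology separates the points of reduced $\ell^1$-homology by the duality principle~\cite{loehl1}, this forces $c = 0$. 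Injectivity of $\iota_\ast$ gives $\loneb d(G) \geq \loneb d(\Gamma)$ for $d \geq 2$, and the case $d = 1$ is trivial since $\rlonehom 1$ vanishes for every group.

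The main obstacle is the rigorous justification of the Mayer--Vietoris sequence in bounded cohomology: one has to control the boundedness of the excision and extension maps for bounded cochains and verify exactness, which is precisely where the amenability of the edge group enters, through the vanishing $\HH_b^\bullet(A;\R) \cong 0$ in positive degrees. A secondary delicate point is the mismatch between the \emph{unreduced} long exact sequence available for bounded cohomology and the \emph{reduced} $\ell^1$-homology appearing in $\loneb d$. Routing part~(2) through the evaluation pairing and the separation statement of~\cite{loehl1}, rather than through a putative reduced Mayer--Vietoris sequence, is what keeps the argument clean and sidesteps the familiar failure of exactness for reduced functors.
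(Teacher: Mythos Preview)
Your argument is correct. The deduction of part~(2) from surjectivity of the restriction~$\iota^\ast = \HH_b^d(i)$ via the Kronecker pairing and the separation property of reduced $\ell^1$-homology is exactly the route taken in the paper. The difference lies in how you establish that surjectivity: you invoke the Mayer--Vietoris sequence for amalgamated free products in bounded cohomology (Bouarich) together with the vanishing~$\HH_b^{\geq 1}(A;\R)\cong 0$, which in fact yields the stronger isomorphism $\HH_b^d(G;\R)\cong \HH_b^d(\Gamma;\R)\oplus \HH_b^d(\Lambda;\R)$ for $d\geq 2$. The paper instead quotes the isometric-section theorem of Bucher--Burger--Frigerio--Iozzi--Pagliantini--Pozzetti (\cite[Theorem~1]{isometric}), which directly produces a right inverse~$\Theta$ to~$\HH_b^d(i)$. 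Your approach is perhaps more elementary in that it avoids the averaging/barycentre machinery underlying~$\Theta$, at the cost of relying on the exactness of the bounded Mayer--Vietoris sequence, whose proof is itself not entirely soft; the paper's route gives the additional isometric information for free. Either way, once surjectivity of~$\iota^\ast$ is in hand, the two proofs of~(2) coincide, and your handling of the edge case~$d=1$ via~$\loneb 1 \leq \dim_\R \HH_b^1 = 0$ is clean.
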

\begin{proof}
  Let $i \colon \Gamma \hookrightarrow \Gamma *_A \Lambda$
  denote the canonical inclusion. Then, by~\cite[Theorem~1]{isometric}, there exists an isometric
  embedding~$\Theta \colon \HH_b^d(\Gamma;\R) \longrightarrow \HH_b^d(\Gamma *_A \Lambda;\R)$
  with
  \[ \HH_b^d(i) \circ \Theta = \id_{\HH_b^d(\Gamma;\R)}.
  \]
  This immediately gives the first part.

  For the second part, it suffices to show that the map~$\rlonehom d (i)
  \colon \rlonehom d (\Gamma;\R) \longrightarrow \rlonehom d(\Gamma
  *_A \Lambda ;R)$ is injective: Let $\alpha \in \rlonehom
  d(\Gamma;\R)$ with $\alpha \neq 0$. Then, by duality, there exists
  a~$\varphi \in \HH_b^d(\Gamma;\R)$ with~$\langle \varphi
  ,\alpha\rangle \neq 0$~\cite{MM}. Therefore, we obtain
  \begin{align*}
    \bigl\langle \Theta(\varphi)
    , \rlonehom d(i) (\alpha)
    \bigr\rangle
    = \bigl\langle \HH_b^d(i) \circ \Theta(\varphi)
    , \alpha
    \bigr\rangle
    = \langle \varphi,\alpha\rangle
    \neq 0.
  \end{align*}
  In particular, $\rlonehom d (i)(\alpha) \neq 0$ in~$\rlonehom d(\Gamma *_A \Lambda;\R)$. 
\end{proof}

\begin{proof}[Proof of Theorem~\ref{thm:noncomp}.(\ref{nc:trivial})---(\ref{nc:bacyclic})]
  We consider~$w \longmapsto \Gamma_w$ as in
  Construction~\ref{constr:rabinZ}.

  In degrees~$2$ and~$3$ and for the case of bounded acyclicity, we
  have: If any of the problems~(\ref{nc:trivial})--(\ref{nc:bacyclic})
  were decidable, then the computations in
  Construction~\ref{constr:rabinZ} show that also the word problem
  for~$\Lambda$ would be decidable.  This contradicts the choice
  of~$\Lambda$.
  
  For the higher degrees, we proceed as follows: Let $d \geq 4$.
  Let $\Gamma$ be the fundamental group of an oriented closed connected
  hyperbolic $(d-2)$-manifold,
  which is finitely presented and torsion-free.
  We then consider the construction
  \begin{align*}
    \text{words over~$S$} & \longrightarrow \text{finite presentations}
    \\
    w & \longmapsto \Pi_w \coloneqq W(F_2,\Lambda,w) \times W(\Gamma,\Lambda,w)
  \end{align*}
  from Construction~\ref{constr:rabinamalg} (with the product presentation). 
  Then, we obtain from the computations in Construction~\ref{constr:rabinamalg}:
  \begin{itemize}
  \item
    If $w$ represents the neutral element of~$\Lambda$, then $\Pi_w
    \cong 1$ is boundedly acyclic and has trivial reduced and
    unreduced $\ell^1$-homology.
  \item
    If $w$ does \emph{not} represent the neutral element of~$\Lambda$,
    then Remark~\ref{rem:loneb}, Example~\ref{exa:hyp},
    Example~\ref{ex:large}.4 and Lemma~\ref{lem:largeamalg} show that
    \begin{align*}
       \loneb d(\Pi_w)
       & \geq \loneb 2 \bigl( W(F_2,\Lambda,w) \bigr) \cdot \loneb {d-2} \bigl(W(\Gamma,\Lambda,w)\bigr)
       \\
       & \geq \loneb 2 (F_2) \cdot \loneb {d-2} (\Gamma)
       \geq |\R| \cdot 1. 
    \end{align*}
    In particular, $\HH_b^d(\Pi_w;\R)$ is large (Proposition~\ref{prop:loneb}).
  \end{itemize}
  Again, we see: If any of the
  problems~(\ref{nc:trivial})--(\ref{nc:l1red}) were decidable in
  degree~$d$, then the word problem for~$\Lambda$ would be decidable,
  which contradicts the choice of~$\Lambda$.
\end{proof}

\begin{rem}
\label{rem:compl2}

  The same argument as in the proof of the
  parts~(\ref{nc:trivial})--(\ref{nc:bacyclic}) of
  Theorem~\ref{thm:noncomp} also gives simple proofs of the following:
  \begin{enumerate}
  \item Let $d\in \N$. Then the $d$-th $L^2$-Betti number~$\ltb d$ of
    finitely presentable groups is \emph{not} computable.

    It should be noted that more sophisticated non-computability
    results for $L^2$-Betti numbers are already
    known~\cite{grabowski}.
  \item The cost of finitely presentable groups is \emph{not}
    computable.
  \end{enumerate}
  We refer the reader to~\cite{lueckl2} for the definition of $L^2$-Betti numbers, and to~\cite{gaboriau} for the definition of cost.  
  For the proofs, we use the same notation as above.
  
  \emph{Ad~1.}  
  If $d = 0$, then computing~$\ltb d$ amounts to computing the
  cardinality of the group in question~\cite[Theorem~6.54]{lueckl2};
  however, it is known that the cardinality of groups is \emph{not}
  computable from finite presentations~\cite[Corollary~12.33]{rotman}.

  We now let $d \geq 1$. 
  For words~$w$ over~$S$, we consider
  \[ \Delta_w := \Gamma_w \times (F_2)^{\times (d-1)}
  \]
  (which admits a finite presentation that can be algorithmically
  constructed from~$\genrel {S_w}{R_w}$). 
  The standard inheritance properties of $L^2$-Betti
  numbers~\cite[Theorem~6.54]{lueckl2} show:
  \begin{itemize}
  \item If $w$ represents the neutral element of~$\Lambda$,
    then $\Delta_w$ is isomorphic to~$\Z \times (F_2)^{\times(d-1)}$
    and thus~$\ltb d(\Delta_w) = 0$.
  \item
    If $w$ does \emph{not} represent the neutral element of~$\Lambda$,
    then
    \begin{align*}
      \ltb 1 (\Gamma_w)
    & = \ltb 1 (\Lambda_w * \Z)
      = \ltb 1 (\Lambda_w) + 1 - \ltb 0 (\Lambda_w)
      \\
    & > \ltb 1 (\Lambda_w) + 1 - 1
      \geq 0
    \end{align*}
    and so
    \begin{align*}
      \ltb d (\Delta_w)
      & = \ltb d (\Gamma_w \times (F_2)^{\times (d-1)})\\
      & \geq \ltb 1 (\Gamma_w) \cdot \ltb {d-1} \bigl((F_2)^{\times (d-1)}\bigr)
     = \ltb 1 (\Gamma_w) \cdot 1
      \\
      & > 0.
    \end{align*}
  \end{itemize}
  Therefore computability of $L^2$-Betti numbers would imply
  solvability of the word problem in~$\Lambda$, which contradicts
  the choice of~$\Lambda$.
    
  \emph{Ad~2.}
  For words~$w$ over~$S$, we again consider the group~$\Gamma_w$ as above.
  \begin{itemize}
  \item If $w$ represents the neutral element of~$\Lambda$, then
    $\Gamma_w \cong \Z$;
    in particular,~$\cost \Gamma_w = 1$~\cite[Corollaire~III.4]{gaboriau}.
  \item If $w$ does \emph{not} represent the neutral element
    of~$\Lambda$, then $\Lambda_w \not\cong 1$. Because finitely
    presented groups are countable and have finite cost, we
    obtain~\cite[Th\'eor\`eme~VI.7$''$]{gaboriau}
    \begin{align*}
      \cost (\Gamma_w)
      & = \cost(\Lambda_w * \Z)
      \geq \cost (\Lambda_w) + \cost(\Z)
      %\\
      %&
      > 0 + 1.
    \end{align*}
  \end{itemize}
  Therefore computability of cost would imply solvability of the word
  problem in~$\Lambda$, which contradicts the choice of~$\Lambda$.
\end{rem}

%%%%%%%%%%%
\subsection{Proof of Corollary~\ref{cor:noncompspaces}}

We deduce Corollary~\ref{cor:noncompspaces} from Theorem~\ref{thm:noncomp}
via Gromov's Mapping Theorem~\cite{vbc, ivanov, FM:Grom}:

\begin{proof}[Proof of Corollary~\ref{cor:noncompspaces}]
  We proceed by contradiction: In view of Theorem~\ref{thm:noncomp} it
  suffices to show that if any of the decision problems formulated in
  Corollary~\ref{cor:noncompspaces} were decidable, then the
  corresponding decision problem for groups in
  Theorem~\ref{thm:noncomp} would be decidable.

  There is an algorithm
  \[ P \colon \text{finite presentations}
     \longrightarrow \text{finite simplicial complexes}
  \]
  with the following property: For all finite presentations~$\genrel
  SR$, the simplicial complex~$P(\genrel SR)$ is connected and
  \[ \pi_1\bigl( P(\genrel SR) \bigr) \cong \genrel SR.
  \]
  For example, one such construction is to take the double barycentric
  subdivision of the presentation cellular complex of the
  presentation~$\genrel SR$.

  By Gromov's Mapping Theorem~\cite{vbc, ivanov, FM:Grom}, we know that
  \[ \fa{d \in \N} \HH_b^d\bigl( P(\genrel SR);\R)
     \cong \HH_b^d\bigl(\genrel SR;\R\bigr);
  \]
  moreover, we have the analogous statement for
  $\ell^1$-homology~\cite[Corollary~5.2]{loehl1}:
  \[ \fa{d \in \N} \lonehom d\bigl(P(\genrel SR);\R\bigr)
     \cong \lonehom d \bigl(\genrel SR;\R\bigr).
  \] 
  Therefore, any algorithm for the problems formulated in
  Corollary~\ref{cor:noncompspaces} would lead to a corresponding
  algorithm for the bounded cohomology of finitely presented
  groups. This contradicts Theorem~\ref{thm:noncomp} and thus
  completes the proof of Corollary~\ref{cor:noncompspaces}.
\end{proof}

%%%%%%%%%%%%%%%%%%%%%%%%%%%%%%%%%%%%%%%%%%%%%%%%%%%%%%%%%%%%%
\appendix
\section{Bounded (co)homological dimension}\label{appx:cdb}

We introduce dimension notions of groups in the context of bounded
cohomology and $\ell^1$-homology following previous works by
Johnson~\cite{Johnson2, Johnson} and
Monod~\cite[Definition~3.1]{Monod:inv}. In contrast with the notion~\cite{Loeh}
\[ \bcd \Gamma := \sup \bigl\{ n \in \N \bigm| \HH^n_b(\Gamma;\R) \not\cong 0 \bigr\}
\in \N \cup \{\infty\}
\]
these dimensions mimic the usual (co)homological dimension and thus
take twisted coefficients into account.

\begin{defi}[bounded (co)homological dimension]
  Let $\Gamma$ be a group.
  \begin{itemize}
  \item
    The \emph{bounded cohomological dimension} of~$\Gamma$
    is defined as
    \[ \cdb \Gamma := \sup \bigl\{ n \in \N
                           \bigm| \exists_{V \in \Ban_\Gamma}\; \HH^n_b(\Gamma;V) \not\cong 0
                           \bigr\}
                   \in \N \cup \{\infty\}.        
    \]
  \item The \emph{bounded homological dimension} of~$\Gamma$
    is defined as
    \[\hdb \Gamma := \sup \bigl\{ n \in \N
                           \bigm| \exists_{V \in \Ban_\Gamma}\; \lonehom n (\Gamma;V) \not\cong 0
                           \bigr\}
                   \in \N \cup \{\infty\}.        
    \]
  \end{itemize}
\end{defi}

\begin{rem}\label{rem:hdbcdb}
  Let $\Gamma$ be a group. By
  duality~\cite[Corollary~2.4.2]{MM}\cite[Corollary~5.3]{loehl1},
  we have
  \[ \hdb \Gamma \leq \cdb \Gamma.
  \]
  More precisely: Let $n \in \N$ and let $V$ be a Banach
  $\Gamma$-module with~$\HH^k_b(\Gamma;V') \cong 0$ for all~$k \in
  \N_{>n}$.  Then $\lonehom k (\Gamma;V) \cong 0$ for all~$k \in
  \N_{>n}$~\cite[Corollary~5.3]{loehl1}\cite[Remark~3.7]{loehthesis}.
\end{rem}

\begin{example}\label{exa:hdbcdb}
  \hfil
  \begin{itemize}
  \item Let $\Gamma$ be a group. Then $\cdb \Gamma = 0$ if and only
    if~$\Gamma$ is finite~\cite[Theorem~3.12]{Frigerio}. 
  \item Let $\Gamma$ be a group. Then $\hdb \Gamma = 0$ if and only
    if~$\Gamma$ is amenable~\cite[Corollary~5.5]{loehl1}.
  \item
    In particular,  $\hdb \Z = 0 < \cdb \Z$.
  \item If $\Gamma$ is a non-amenable group, then $\cdb \Gamma \geq
    3$.  This can be derived from the fact that $\Gamma$ admits $F_2$
    as a random subgroup~\cite[Theorem~5.4,
      Proposition~5.8]{Monod:inv}.
  \item
    Let $M$ be an oriented closed connected hyperbolic $n$-manifold
    and let $\Gamma := \pi_1(M)$.  Then $\Gamma$ is finitely presentable
    and $\loneb n (\Gamma) \geq 1$ 
    as well as $\HH^n_b(\Gamma;\R) \not \cong 0$
    (Example~\ref{exa:hyp}, Proposition~\ref{prop:loneb}).
    Thus,
    \[ \hdb \Gamma \geq n
    \quad\text{and}\quad
    \cdb \Gamma \geq n.
    \]
    Such examples exist in all dimensions at least $2$.
  \item If $\Gamma$ is a group with~$\bcd \Gamma = \infty$, then
    $\cdb \Gamma =\infty$.  In particular, this applies to the groups
    from Section~\ref{sec:large}.
  \end{itemize}
\end{example}

However, as of now no example seems to be known of a group of finite
non-trivial bounded (co)homological dimension.

\begin{prop}[cohomological dimension as projective dimension]
  Let $\Gamma$ be a group. Then $\cdb \Gamma$ coincides with 
  the \emph{relatively projective dimension} of~$\Gamma$
  \begin{align*}
    \relprojdim \Gamma :=
    \inf \bigl\{ n \in \N \bigm|
    \;&\text{$\R$ admits a strong relatively projective}
    \\
    & \text{$\Gamma$-resolution of length~$\leq n$}\bigr\}
    \in \N \cup \{\infty\}.
  \end{align*}
\end{prop}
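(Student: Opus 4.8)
The plan is to prove the two inequalities $\cdb \Gamma \leq \relprojdim \Gamma$ and $\relprojdim \Gamma \leq \cdb \Gamma$ separately, mimicking the classical identification of cohomological dimension with the projective dimension of the trivial module. Throughout I rely on the standard relative homological algebra of Banach $\Gamma$-modules (relatively projective modules, strong resolutions, and the fundamental lemma asserting that any strong relatively projective resolution of~$\R$ computes $\HH^\bullet_b(\Gamma;V)$ for every Banach $\Gamma$-module~$V$; cf.~\cite{monod}), together with the fact that the $\ell^1$-bar resolution $\lonech{\bullet}(\Gamma) \to \R$ is one such resolution.

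For the inequality $\cdb \Gamma \leq \relprojdim \Gamma$ I would argue directly. If $\relprojdim \Gamma = \infty$ there is nothing to prove, so assume $\R$ admits a strong relatively projective resolution $0 \to P_n \to \cdots \to P_0 \to \R \to 0$ of length~$n$. Applying $B(\args,V)^\Gamma$ degreewise and invoking the fundamental lemma, the cochain complex $B(P_\bullet,V)^\Gamma$ computes $\HH^\bullet_b(\Gamma;V)$; since it is concentrated in degrees $0,\dots,n$, we obtain $\HH^k_b(\Gamma;V) \cong 0$ for all $k > n$ and all~$V$, hence $\cdb \Gamma \leq n$. Taking the infimum over all such resolutions gives the claim.

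The reverse inequality is the substantial direction. Assume $\cdb \Gamma = n < \infty$. Starting from the bar resolution, let $K := \ker(\partial_{n-1}) = \im(\partial_n) \subseteq \lonech{n-1}(\Gamma)$ be the $(n-1)$-st syzygy, and form the truncated complex
\[ 0 \to K \hookrightarrow \lonech{n-1}(\Gamma) \xrightarrow{\partial_{n-1}} \cdots \xrightarrow{\partial_1} \lonech{0}(\Gamma) \to \R \to 0. \]
First, this truncation is still a \emph{strong} resolution: the standard contracting homotopy $(s_k)_k$ of the bar resolution restricts to one of the truncation, where at the top one uses $\partial_n s_{n-1}$ corestricted to~$K$; this is a routine verification from $\partial_{k+1}s_k + s_{k-1}\partial_k = \id$. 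Since each $\lonech{k}(\Gamma)$ is relatively projective, the truncation becomes a strong relatively projective resolution of length~$n$ as soon as one shows the remaining point, namely that~$K$ is relatively projective.

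Establishing relative projectivity of~$K$ is the crux, and the strategy is to exhibit~$K$ as a $\Gamma$-retract of $\lonech{n}(\Gamma)$. Consider the admissible short exact sequence $0 \to Z_n \to \lonech{n}(\Gamma) \xrightarrow{\bar\partial_n} K \to 0$ with $Z_n := \ker(\partial_n)$, which is admissible because the strong structure supplies a bounded linear section. This extension splits by a $\Gamma$-morphism precisely when its class in the relative $\operatorname{Ext}^1_\Gamma(K, Z_n)$ vanishes, and such a splitting realises~$K$ as a direct summand of $\lonech{n}(\Gamma)$, hence as relatively projective. To see that the class vanishes I would use dimension shifting along the syzygy sequences $0 \to Z_k \to \lonech{k}(\Gamma) \to Z_{k-1} \to 0$ (with relatively projective middle terms), which yields, for every coefficient module~$V$, natural isomorphisms
\[ \operatorname{Ext}^1_\Gamma(K, V) \cong \operatorname{Ext}^{n+1}_\Gamma(\R, V) \cong \HH^{n+1}_b(\Gamma; V). \]
Taking $V = Z_n$ and using $\cdb \Gamma \leq n$ gives $\operatorname{Ext}^1_\Gamma(K, Z_n) \cong \HH^{n+1}_b(\Gamma; Z_n) \cong 0$, so the sequence splits and~$K$ is relatively projective. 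Consequently the truncation is a strong relatively projective resolution of length~$n$, whence $\relprojdim \Gamma \leq n = \cdb \Gamma$. The main obstacle is thus the bookkeeping of relative homological algebra---verifying admissibility and strongness of the truncated sequences and confirming that the derived functors $\operatorname{Ext}^\bullet_\Gamma$ built from strong relatively projective resolutions agree with bounded cohomology and satisfy the expected dimension-shifting isomorphisms---rather than any single hard estimate.
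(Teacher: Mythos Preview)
Your proposal is correct and follows essentially the same route as the paper: both prove the easy inequality via the fundamental lemma for strong relatively projective resolutions, and for the reverse direction both truncate a resolution by showing the relevant syzygy is relatively projective. The paper is much terser---it simply invokes ``the same argument as in the classical case'' to conclude that $\ker\partial_n$ is relatively projective---whereas you spell out the dimension-shifting identification $\operatorname{Ext}^1_\Gamma(K,V)\cong \HH^{n+1}_b(\Gamma;V)$ and the resulting splitting; your caveats about admissibility and strongness of the truncated sequences are exactly the bookkeeping the paper leaves implicit.
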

\begin{proof}
  We argue as in the case of classical cohomological dimension:
  
  We have $\cdb \Gamma \leq \relprojdim \Gamma$, because
  for all Banach $\Gamma$-modules~$V$ and all strong relatively
  projective $\Gamma$-resolutions~$C_* \longrightarrow \R$
  of~$\R$, we have~\cite[Theorem~3.7]{loehl1}
  \[ \HH^*_b(\Gamma;V) \cong \HH^*\bigl(B(C_*,V)^\Gamma\bigr).
  \]

  Conversely, we have $\relprojdim \Gamma \leq \cdb \Gamma$: If $C_*
  \longrightarrow\R$ is a strong relatively projective
  $\Gamma$-resolution of~$\R$ and if $n \in \N$ satisfies for all
  Banach $\Gamma$-modules~$V$ that
  \[ \HH_b^{n+1}(\Gamma;V) \cong 0,
  \]
  then the same argument as in the classical case shows that
  $\ker \partial_n$ is a relatively projective $\Gamma$-module.
  Therefore, $C_*$ can be truncated at degree~$n$. This
  shows that $\relprojdim \Gamma \leq n$.
\end{proof}

\begin{prop}[monotonicity of bounded (co)homological dimension]\label{prop:dimmono}
  Let $\Gamma$ be a group and let $\Lambda \leq \Gamma$ be a
  subgroup.  Then
  \[ \hdb \Lambda \leq \hdb \Gamma
  \quad\text{and}\quad
  \cdb \Lambda \leq \cdb \Gamma.
  \]
\end{prop}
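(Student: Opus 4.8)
The plan is to deduce both inequalities from Shapiro's lemma, which transfers a computation over the subgroup~$\Lambda$ to one over the ambient group~$\Gamma$ at the cost of passing to (co)induced coefficients. Concretely, for a Banach $\Lambda$-module~$V$ I would form the coinduced Banach $\Gamma$-module $\operatorname{CoInd}_\Lambda^\Gamma V$ (an $\ell^\infty$-type module of $V$-valued functions on~$\Gamma$) and the induced Banach $\Gamma$-module $\operatorname{Ind}_\Lambda^\Gamma V := \ell^1(\Gamma) \mathbin{\overline\otimes_\Lambda} V$, and use the natural isomorphisms
\[
  \HH^n_b(\Lambda;V) \cong \HH^n_b\bigl(\Gamma;\operatorname{CoInd}\nolimits_\Lambda^\Gamma V\bigr)
  \qquad\text{and}\qquad
  \lonehom n (\Lambda;V) \cong \lonehom n \bigl(\Gamma;\operatorname{Ind}\nolimits_\Lambda^\Gamma V\bigr),
\]
valid for all~$n \in \N$ and every subgroup (of arbitrary index).

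Granting these, both statements are immediate. For the cohomological inequality, let $n > \cdb \Gamma$ and let $V$ be any Banach $\Lambda$-module. Since $\operatorname{CoInd}_\Lambda^\Gamma V$ is a Banach $\Gamma$-module, the definition of $\cdb \Gamma$ gives $\HH^n_b(\Gamma;\operatorname{CoInd}_\Lambda^\Gamma V) \cong 0$, and Shapiro's lemma then forces $\HH^n_b(\Lambda;V) \cong 0$. As $V$ was arbitrary, $\cdb \Lambda \leq \cdb \Gamma$. The homological inequality follows verbatim, replacing coinduction by induction and bounded cohomology by $\ell^1$-homology.

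The key input, and the \emph{main technical obstacle}, is establishing Shapiro's lemma in this generality. In bounded cohomology this is classical~\cite{monod, Frigerio}. For both versions simultaneously the cleanest route is via resolutions: the standard $\ell^1$-resolution $\lonech\bullet(\Gamma)$ is, upon restriction along $\Lambda \hookrightarrow \Gamma$, still a strong relatively projective $\Lambda$-resolution of $\R$. Indeed, each $\lonech n(\Gamma) = \ell^1(\Gamma^{n+1})$ remains relatively projective over $\Lambda$, and strongness is witnessed by a bounded $\R$-linear contracting homotopy, which is insensitive to the group action and hence survives restriction~\cite{loehl1}. Combining this with the tensor--hom adjunctions relating $\mathbin{\overline\otimes_\Gamma}$, $\mathbin{\overline\otimes_\Lambda}$ and the (co)induction functors yields the two displayed isomorphisms.

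The points requiring care are that $\operatorname{Ind}_\Lambda^\Gamma V$ and $\operatorname{CoInd}_\Lambda^\Gamma V$ are genuine Banach $\Gamma$-modules, so that they are admissible coefficients in the suprema defining $\cdb$ and $\hdb$, and that the adjunction isomorphisms respect the $\ell^1$- and $\ell^\infty$-structures closely enough to induce isomorphisms on (co)homology; for infinite-index subgroups one should check that the projective tensor product in the definition of $\operatorname{Ind}_\Lambda^\Gamma V$ behaves as expected. Alternatively, for the cohomological inequality one can bypass Shapiro and use the identification $\cdb \Gamma = \relprojdim \Gamma$ established above: a shortest strong relatively projective $\Gamma$-resolution of $\R$ restricts to a strong relatively projective $\Lambda$-resolution of the same length, giving $\relprojdim \Lambda \leq \relprojdim \Gamma$ directly. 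This shortcut, however, only yields $\hdb \Lambda \leq \cdb \Gamma$ on the homological side, so the $\ell^1$-version of Shapiro's lemma seems genuinely necessary to obtain the sharp bound $\hdb \Lambda \leq \hdb \Gamma$.
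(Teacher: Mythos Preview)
Your proposal is correct and matches the paper's proof essentially verbatim: the paper invokes the Shapiro lemma with coinduced coefficients $B(\ell^1\Gamma,V)^\Lambda$ for bounded cohomology (citing Monod) and with induced coefficients $\ell^1\Gamma \mathbin{\overline\otimes_\Lambda} V$ for $\ell^1$-homology, the latter justified exactly as you outline by observing that $\res^\Gamma_\Lambda \lonech*(\Gamma)$ remains a strong relatively projective $\Lambda$-resolution of~$\R$. Your identification of the technical checkpoints (that the (co)induced modules are genuine Banach $\Gamma$-modules and that the adjunction isomorphisms are bounded) is apt, though the paper simply takes these as standard.
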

\begin{proof}
  This is the usual Shapiro argument: Let $V$ be a
  Banach $\Lambda$-module and let $n \in \N$. 
  In bounded cohomology, we have~\cite[Proposition~10.1.3]{monod}
  \[ \HH^n_b(\Lambda;V)
     \cong
     \HH^n_b \bigl(\Gamma; B(\ell^1\Gamma,V)^\Lambda\bigr).
  \]

  In $\ell^1$-homology: There is a natural isomorphism
  \[ \lonech *(\Gamma) \mathbin{\overline\otimes_\Gamma} (\ell^1 \Gamma \mathbin{\overline\otimes_\Lambda} V)
     \cong \res^\Gamma_\Lambda \lonech * (\Gamma) \mathbin{\overline \otimes_\Lambda} V
  \]
  of Banach chain complexes; moreover, $\res^\Gamma_\Lambda
  \lonech*(\Gamma)$ is a strong relatively projective
  $\Lambda$-resolution of~$\R$.  Using the description of
  $\ell^1$-homology via projective
  resolutions~\cite[Theorem~3.7]{loehl1}, we thus obtain a natural
  isomorphism
  \[ \lonehom n (\Lambda;V)
     \cong
     \lonehom n (\Gamma; \ell^1\Gamma \mathbin{\overline\otimes_\Lambda} V).
     \qedhere
  \]
\end{proof}

%%%%%%%%%%%%%%%%%%%%%%%%%%%%%%%%%%%%%%%%%%%%%%%%%%%%%%%%%%%
% bib
 
\bibliographystyle{abbrv}
\bibliography{bacbib}

\end{document}